\begin{document}
 \baselineskip 18pt
\hfuzz=6pt

\newtheorem{theorem}{Theorem}[section]
\newtheorem{prop}[theorem]{Proposition}
\newtheorem{lemma}[theorem]{Lemma}
\newtheorem{definition}[theorem]{Definition}
\newtheorem{cor}[theorem]{Corollary}
\newtheorem{example}[theorem]{Example}
\newtheorem{remark}[theorem]{Remark}
\newcommand{\ra}{\rightarrow}
\renewcommand{\theequation}
{\thesection.\arabic{equation}}
\newcommand{\ccc}{{\mathcal C}}
\newcommand{\one}{1\hspace{-4.5pt}1}

\def \Gg {\widetilde{{\mathcal G}}_{L}}
\def \GG {{\mathcal G}_{L}}
\def \SL {\sqrt{L}}
\def \GL{G_{\mu,\Phi}^{\ast}}
\def \gL{g_{\mu,\Psi}^{\ast}}
\def \RN {\mathbb{R}^n}
\def\RR{\mathbb R}
\newcommand{\nf}{\infty}

\allowdisplaybreaks

\title[Weighted $L^p$ estimates  for the   area integral]
{ Weighted $L^p$ estimates  for the  area integral\\ [2pt] associated to   self-adjoint operators}

\author{Ruming Gong   \   and \   Lixin Yan}
 \footnotetext[1]{{\it {\rm 2000} Mathematics Subject Classification:}
42B20, 42B25, 46E35, 47F05.}
\footnotetext[1]{{\it Key words and phrase:} Weighted norm inequalities, area integral, self-adjoint operators, heat kernel,
semigroup, Whitney decomposition, $A_p$ weighs.}
\address{
Ruming Gong, Department of Mathematics, Sun Yat-sen (Zhongshan) University, Guangzhou, 510275, P.R. China}
\email{
gongruming@163.com}
\address{
Lixin Yan, Department of Mathematics, Sun Yat-sen (Zhongshan) University, Guangzhou, 510275, P.R. China}
\email{
mcsylx@mail.sysu.edu.cn
}

%\date{}

\begin{abstract} This article is concerned with  some weighted norm inequalities 
 for the so-called horizontal (i.e. involving time derivatives) area integrals  associated to  a non-negative self-adjoint operator
satisfying  a pointwise Gaussian estimate  for its heat kernel, 
as well as     the corresponding  vertical (i.e. involving space derivatives) area integrals  associated to  a non-negative self-adjoint operator
satisfying in addition a pointwise upper bounds for the gradient of the heat  kernel. 
%In this article, we study  weighted norm inequalities 
% for the  area integral associated to  a non-negative self-adjoint operator
%satisfying  some   estimates  on its heat kernel bounds. 
As applications, we  obtain   sharp estimates for the operator norm of  the area integrals on $L^p(\RN)$ as $p$ becomes large,
and the growth of the $A_p$ constant on estimates  of  the area integrals 
 on the weighted $L^p$ spaces.
\end{abstract}

\maketitle

  \tableofcontents

\section{Introduction  }
\setcounter{equation}{0}

\noindent
{\bf 1.1. Background.}\
Let $\varphi\in C_0^{\infty}({\RN})$ with $\int \varphi =0.$
Let $\varphi_t(x)=t^{-n}\varphi(x/t), t>0$, and define the Lusin area integral by

\begin{eqnarray}\label{e1.1}
S_{\varphi}(f)(x)=\bigg(\int_{|x-y|<t}
\big|f\ast \varphi_t(y)\big|^2 {dy \, dt\over t^{n+1}}\bigg)^{1/2}.
\end{eqnarray}

\smallskip

\noindent
A celebrated result of
Chang-Wilson-Wolff (\cite{CWW}) says that for all $w\geq 0$, $w\in L_{\rm loc}^{1}(\RN)$ and all $f\in \mathcal{S}(\RN)$,
  there is a constant $C=C(n, \varphi)$ independent of $w$ and $f$ such that

\begin{eqnarray}\label{e1.2}
\int_{\RN}S^2_{\varphi}(f)w\, dx\leq C\int_{\RN}|f|^2Mw\, dx,
\end{eqnarray}

\noindent
where  $Mw$ denotes the Hardy-Littlewood maximal operator of $w$.

\smallskip

The fact that $\varphi$ has compact support is crucial  in the proof of Chang, Wilson and Wolff.
In \cite{CW},  Chanillo and Wheeden  overcame this difficulty, and they    obtained  weighted $L^p$ inequalities for
$1<p<\infty$ of
the area integral, even when $\varphi$
does not have compact support, including the classical   area function  defined by means of the Poisson kernel.

%\begin{eqnarray*}
%S(f)(x)=\bigg(\int_{|x-y|<t}
%\big|\nabla_{y,t}\big(f\ast P_t\big)(y)\big|^2 {dy \, dt\over t^{n+1}}\bigg)^{1/2},
%\end{eqnarray*}

%\noindent
%where $\nabla_{y,t}=\big\langle {\partial\over \partial y_1}, ...,
% {\partial\over \partial y_n}, {\partial\over \partial t}\big\rangle$ and the Poisson kernel is given by

%$$
%P_t(y)=c_n{t\over \big(t^2+|y|^2\big)^{(n+1)/2}}, \ \ \ t>0.
%$$

\smallskip

From   the theorem  of Chang, Wilson and Wolff, it was already observed in \cite{FP}   that R. Fefferman and Pipher obtained
 sharp estimates for the operator norm of    a classical Calder\'on-Zygmund
singular integral, or the classical   area integral for $p$ tending to infinity, e.g.,

\begin{eqnarray}\label{e1.3}
\big\|S_{\varphi}(f)\big\|_{L^p(\RN)}\leq C p^{1/2} \big\|f\big\|_{L^p(\RN)}
\end{eqnarray}

 \noindent
 as $p\rightarrow \infty.$\\

\noindent
{\bf 1.2. Assumptions, notation and definitions.}\, In this article, our main goal is
to provide an extension of the result  of 
Chang-Wilson-Wolff  to study  some weighted norm inequalities 
  for the area integrals  associated to    non-negative self-adjoint operators,
 whose kernels are not smooth enough
to fall under the scope of \cite{CWW, CW, W}.
The relevant classes of operators is determined by the following condition: 

\smallskip

\noindent
{\bf Assumption $(H_1)$.}\, 
Assume that  $L$ is a  non-negative self-adjoint operator  on $L^2({\mathbb R}^{n}),$
 the semigroup $e^{-tL}$, generated by $-L$ on $L^2(\RN)$,  has the kernel  $p_t(x,y)$
which  satisfies
the following  Gaussian upper bound if there exist  $C$ and $c$ such that
 for all $x,y\in {\mathbb R}^{n}, t>0,$

 $$
|p_{t}(x,y)| \leq \frac{C}{t^{n/2} } \exp\Big(-{|x-y|^2\over
c\,t}\Big).
\leqno{(GE)}
$$

\smallskip

\noindent

Such estimates are typical for elliptic or sub-elliptic differential operators of second
order (see for instance, \cite{Da} and \cite{DOS}).\\  

 For $f\in {\mathcal S}(\RN)$, define  the (so called vertical) area functions  $S_{P}$ and $S_{H}$
 by

 \begin{eqnarray}\label{e1.4}
S_{P}f(x)&=&\bigg(\int_{|x-y|<t}
|t\nabla_y e^{-t\sqrt{L}} f(y)|^2 {dy dt\over t^{n+1}}\bigg)^{1/2},\\
S_{H}f(x)&=&\bigg(\int_{|x-y|<t}
|t\nabla_y e^{-t^2L} f(y)|^2 {dy dt\over t^{n+1}}\bigg)^{1/2}, \label{e1.5}
\end{eqnarray}

\noindent
as well as the (so-called horizontal) area functions $s_{p}$ and $s_{h}$
 by

 \begin{eqnarray}\label{e1.6}
s_{p}f(x)&=&\bigg(\int_{|x-y|<t}
|t\sqrt{L} e^{-t\sqrt{L}} f(y)|^2 {dy dt\over t^{n+1}}\bigg)^{1/2},\\
s_{h}f(x)&=&\bigg(\int_{|x-y|<t}
|t^2L e^{-t^2L} f(y)|^2 {dy dt\over t^{n+1}}\bigg)^{1/2}.\label{e1.7}
\end{eqnarray}

\smallskip

It is well known (cf. e.g. \cite{St, G}) that  when    $L=\Delta$ is the Laplacian on $\RN$, the classical 
area functions $S_{P}, S_{H}, s_{p}$ and $s_{h}$  are all bounded on $L^p(\RN), 1<p<\infty.$
For a general non-negative self-adjoint  operator $L$, $L^p$-boundedness of 
  the area functions $S_{P}, S_{H}, s_{p}$ and $s_{h}$ associated to $L$  has  been 
studied extensively -- see for examples   \cite{A}, \cite{ACDH}, \cite{ADM}, \cite{CDL}, \cite{St1} and \cite{Y}, and the references therein.   
 
 \medskip

\noindent
{\bf 1.3. Statement of the main results.}
Firstly, we have the following weighted $L^p$ estimates for the area functions
 $s_{p}$ and $s_{h}.$

 \medskip

\begin{theorem}\label{th1.1} \    Let $L$ be a non-negative self-adjoint operator such that the corresponding
 heat kernels satisfy Gaussian bounds $(GE)$.    
If $w\geq 0$, $w\in L_{\rm loc}^{1}(\RN)$  and  $f\in \mathcal{S}(\RN)$, then

\smallskip
\begin{eqnarray*}
\hspace{-2.5cm}
&{\rm (a)}& \hspace{0.1cm}
\int_{\RN}s_h(f)^pw\, dx\leq C(n,p)\int_{\RN}|f|^pMw\, dx,\ \ \ 1<p\leq 2,\\
\hspace{-2.5cm}&{\rm (b)}& \hspace{0.1cm}\int_{\{s_h(f)>\lambda\}} wdx\leq {C(n)\over \lambda}\int_{\RN}|f| Mwdx,\ \ \ \lambda>0,
\\
\hspace{-2.5cm}&{\rm (c)}& \hspace{0.1cm} \int_{\RN} s_h(f)^pwdx\leq C(n,p)\int_{\RN}|f|^p(Mw)^{p/2}w^{-(p/2-1)}dx, \ \ \ 2<p<\infty.
\end{eqnarray*}

\smallskip
Also, estimates (a), (b) and (c) hold  for the operator $s_p.$
 \end{theorem}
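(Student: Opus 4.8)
My plan is to isolate the case $p=2$ of (a)---for an arbitrary weight $w$, $\int_{\RN}s_h(f)^2w\,dx\le C\int_{\RN}|f|^2Mw\,dx$---as the core estimate, and to deduce the remaining statements from it. For the core estimate, Fubini's theorem gives
\[
\int_{\RN}s_h(f)^2w\,dx=\int_0^\infty\!\!\int_{\RN}\big|t^2Le^{-t^2L}f(y)\big|^2\,w(B(y,t))\,{dy\,dt\over t^{n+1}},
\]
so the task is to control this area--type measure tested against the averaged weight $w(B(y,t))$; bounding $w(B(y,t))$ crudely by $t^nMw(y)$ loses too much, so the averaging must be kept. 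Two consequences of Assumption $(H_1)$ are used. First, the spectral theorem gives the unweighted bound $\|s_hf\|_{L^2(\RN)}\le C\|f\|_{L^2(\RN)}$, in fact $\int_0^\infty(t^2Le^{-t^2L})^2\,\frac{dt}{t}=cI$. Second, $(GE)$ and the analyticity of $e^{-zL}$ force the kernel $K_t(x,y)$ of $t^2Le^{-t^2L}$ to obey $|K_t(x,y)|\le C_N\,t^{-n}(1+|x-y|/t)^{-N}$ for every $N$. These two facts play the role that the $L^2$--boundedness and the compact support of $\varphi$ play in Chang--Wilson--Wolff. Using the rapid decay one localizes $K_t$ to $\{|x-y|\lesssim t\}$ modulo a negligible tail, replaces the continuous $t$ by dyadic values, and reorganizes the cone integral into a square function indexed by dyadic cubes; the errors are summed geometrically thanks to the decay of $K_t$ and to the almost--orthogonality of $\{t^2Le^{-t^2L}\}$ coming from the spectral theorem. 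This reduces the matter to a Chang--Wilson--Wolff type inequality for a dyadic square function, valid for every $w\ge0$, $w\in L^1_{\mathrm{loc}}(\RN)$. (Alternatively one runs the good--$\lambda$ inequality comparing $s_hf$ with $(M(|f|^2))^{1/2}$---localizing the cone by means of $(GE)$ and using the spectral $L^2$ bound on the local part---and then invokes the Fefferman--Stein inequality $\|M(|f|^2)\|_{L^1(w)}\le C\||f|^2\|_{L^1(Mw)}$.) I expect this step to be the main obstacle: the heat kernel of $L$ is not smooth, so none of the classical arguments applies verbatim and everything must be phrased through the semigroup and its Gaussian decay.

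Granting the case $p=2$ of (a), part (c) follows by duality. Since $\|s_hf\|_{L^p(w)}^2=\|s_h(f)^2\|_{L^{p/2}(w)}$ and $p/2>1$, choose $h\ge0$ with $\|h\|_{L^{(p/2)'}(w)}=1$ so that $\|s_hf\|_{L^p(w)}^2=\int_{\RN}s_h(f)^2\,hw\,dx$. Applying the $p=2$ estimate of (a) with the weight $hw$ in place of $w$ gives $\int_{\RN}s_h(f)^2\,hw\,dx\le C\int_{\RN}|f|^2\,M(hw)\,dx$, and Hölder's inequality with exponents $p/2$ and $(p/2)'$---splitting the integrand as $\big(|f|^2(Mw)w^{-(p-2)/p}\big)\cdot\big(M(hw)(Mw)^{-1}w^{(p-2)/p}\big)$---puts exactly the weight $(Mw)^{p/2}w^{-(p/2-1)}$ on the $f$--factor and reduces everything to $\int_{\RN}\big(M(hw)/Mw\big)^{(p/2)'}w\,dx\le C$. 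This last bound is immediate from the pointwise inequality $M\phi\le C\,M_w(\phi/w)\,Mw$, where $M_w$ is the Hardy--Littlewood maximal operator for the measure $w\,dx$, together with the boundedness of $M_w$ on $L^{(p/2)'}(w\,dx)$.

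For (b) one performs a Calderón--Zygmund decomposition $f=g+\sum_j b_j$ at height $\lambda$, with $b_j$ supported on a cube $Q_j$ of side $r_j$, $\int b_j=0$, $\|b_j\|_1\le C\lambda|Q_j|$ and $\sum_j|Q_j|\le C\lambda^{-1}\|f\|_1$. The good part is handled by $\|g\|_\infty\le C\lambda$, the $p=2$ estimate of (a), Chebyshev's inequality and the Fefferman--Stein inequality, using also the elementary bound $Mw\le M(w\chi_{3Q_j})+C\inf_{Q_j}Mw$ on $Q_j$ to keep the weight on the right side equal to $Mw$. For the bad part one discards $\bigcup_j4Q_j$, whose $w$--measure is at most $C\lambda^{-1}\int|f|Mw$ by the same control of $Mw$, and then estimates $s_h(\sum_jb_j)$ on the complement. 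Since $(GE)$ provides no Hölder regularity of the kernel, the cancellation of $b_j$ is exploited through the semigroup: write $b_j=(I-e^{-r_j^2L})^m b_j+\big(I-(I-e^{-r_j^2L})^m\big)b_j$; the second, "averaged" piece is controlled by the $L^2$ bound of (a) at $p=2$ together with the $L^2$ off--diagonal (Davies--Gaffney) estimates for $e^{-kr_j^2L}$ coming from $(GE)$, while the area function of the first piece decays rapidly in $\mathrm{dist}(x,Q_j)/r_j$, so the sum over $j$ converges when integrated against $w$. This is the standard device of Duong--McIntosh and Blunck--Kunstmann for operators without kernel smoothness.

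Finally, (a) for $1<p<2$ is obtained by Marcinkiewicz interpolation for the sublinear map $f\mapsto s_hf$ between the fixed measure spaces $(\RN,Mw\,dx)$ and $(\RN,w\,dx)$, using the strong $(2,2)$ bound from the case $p=2$ of (a) and the weak $(1,1)$ bound (b). Every step above uses only Assumption $(H_1)$, never the particular shape of $\lambda\mapsto\lambda^2e^{-\lambda^2}$; consequently the identical argument yields the assertions for $s_p$, with $t^2Le^{-t^2L}$ replaced by $t\sqrt L\,e^{-t\sqrt L}$, the (Poisson--type) pointwise bound needed for the kernels of $e^{-t\sqrt L}$ and $t\sqrt L\,e^{-t\sqrt L}$ being a consequence of $(GE)$ via the subordination formula.
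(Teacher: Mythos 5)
Your overall skeleton matches the paper's (a core weighted $L^2$ estimate, duality for $p>2$, a Calder\'on--Zygmund decomposition for the weak $(1,1)$ bound, and interpolation for $1<p<2$), but the central $p=2$ estimate as you sketch it has a genuine gap, and it is precisely the one the paper's machinery is designed to close. You propose to apply the Chang--Wilson--Wolff argument directly to $s_h$ by ``localizing $K_t$ to $\{|x-y|\lesssim t\}$ modulo a negligible tail'' and then running a dyadic square-function/level-set argument, or alternatively a good--$\lambda$ inequality plus Fefferman--Stein. Neither variant survives for \emph{arbitrary} $w\ge0$: the CWW level-set step needs the exact implication ``$(y,t)$ is in level set $A_k$ and $|y-z|<t$, hence $z\in E_k=\{Mw\gtrsim 2^k\}$, hence $\Psi(t\sqrt L)f(y)=\Psi(t\sqrt L)(f\chi_{E_k})(y)$'', which requires the kernel of $\Psi(t\sqrt{L})$ to be \emph{exactly} supported in $\{|y-z|\le t\}$, not just rapidly decaying; and a Lebesgue-measure good--$\lambda$ inequality upgrades only to weighted bounds under an $A_\infty$ hypothesis on $w$, which is exactly what is not being assumed here. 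The tails of $t^2Le^{-t^2L}$ are not negligible for a general locally integrable weight.

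What the paper does instead is the Calder\'on--Torchinsky/Chanillo--Wheeden device at the operator level: it introduces the auxiliary function $\gL$ built from $\Psi(t\sqrt{L})$ where $\Psi(s)=s^{2n+2}\Phi^3(s)$ and $\Phi=\widehat\varphi$ with $\varphi\in C_0^\infty$, so that by finite propagation speed for $\cos(t\sqrt L)$ (Lemma~\ref{le2.1}, a consequence of $(GE)$ through the Coulhon--Sikora theorem) the kernel $K_{\Psi(t\sqrt L)}$ is \emph{exactly} supported in $\{|y-z|\le t\}$. It then proves the weighted $L^2$ estimate for $\gL$ (Theorem~\ref{th3.5}) by the CWW level-set argument --- which now goes through verbatim --- and controls $s_h$ by $\gL$ pointwise (Proposition~\ref{prop3.3}) via the subordination formula and the time-derivative Gaussian bounds. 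Your duality argument for part (c) is essentially the paper's, and your weak $(1,1)$ sketch is in the right spirit (the paper decomposes each $b_j$ using $1-\Phi_j(\sqrt L)$ with $\Phi_j$ again having compactly supported kernel, rather than your $(I-e^{-r_j^2 L})^m$, but both are Duong--McIntosh-type devices), although as written both your (b) and (c) quietly rely on the very $L^2(w)\to L^2(Mw)$ bound that your (a) leaves unproved. The missing ingredient is the finite-propagation-speed construction of a compactly supported spectral multiplier, together with the pointwise domination $s_h\le C\gL$.
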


\medskip

To study  weighted $L^p$-boundedness of   the (so-called vertical) area integrals $S_{P}$ and $S_{H}$,   
one assumes in addition   the following condition:

\smallskip

\noindent
{\bf Assumption $(H_2)$.}\, Assume that 
the semigroup $e^{-tL}$, generated by $-L$ on $L^2(\RN)$,  has the kernel  $p_t(x,y)$
which  satisfies a pointwise upper bound for the gradient of  the heat kernel. That is, there exist  $C$ and $c$  such that for all $x,y\in{\RN}, t>0$,

$$
\big|\nabla_x p_t(x,y)\big|\leq {C\over t^{(n+1)/2}} \exp\Big(-{|x-y|^2\over
c\,t}\Big).
\leqno{(G)}
$$

\medskip

Then the following result holds.

\medskip

 \begin{theorem}\label{th1.2} \    Let $L$ be a non-negative self-adjoint operator such that the corresponding
 heat kernels satisfy conditions  $(GE)$ and $(G)$.
 \noindent
 If $w\geq 0$, $w\in L_{\rm loc}^{1}(\RN)$  and  $f\in \mathcal{S}(\RN)$, then $(a), (b)$ and $(c)$ of Theorem~\ref{th1.1}
 hold for  the area functions $S_{P}$ and  $S_{H}$.
 \end{theorem}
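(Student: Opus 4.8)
The plan is to deduce Theorem~\ref{th1.2} from Theorem~\ref{th1.1} by reducing the vertical area functions $S_P, S_H$ to the horizontal ones $s_p, s_h$ plus a controllable error. The key observation is that under $(G)$ the operators $t\nabla_y e^{-t^2L}$ and $t\sqrt{L}\,e^{-t^2L}$ (respectively $t\nabla_y e^{-t\sqrt L}$ and $t\sqrt L\, e^{-t\sqrt L}$) have comparable kernel bounds: both satisfy Gaussian-type pointwise estimates of the form $C t^{-n}\exp(-|x-y|^2/ct)$ (after the usual rescaling), so the only real task is to pass from the scalar multiplier $t\sqrt L$ to the vector of derivatives $t\nabla_y$. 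First I would record, using $(GE)$ together with $(G)$ and the spectral/Davies--Gaffney technology already invoked for Theorem~\ref{th1.1}, that the kernel $K_t(x,y)$ of $t\nabla_y e^{-t^2 L}$ satisfies a Gaussian upper bound with the same homogeneity as the kernel of $t^2 L e^{-t^2 L}$, and likewise (by subordination, writing $e^{-t\sqrt L}=\int_0^\infty e^{-sL}\,d\mu_t(s)$ with the standard subordination measure) that the kernel of $t\nabla_y e^{-t\sqrt L}$ inherits a Gaussian bound from that of $t\sqrt L e^{-t\sqrt L}$. The content of Theorem~\ref{th1.1} is really a statement about square functions whose fibered kernels obey such Gaussian bounds, so once these bounds are in place the proof of Theorem~\ref{th1.1} applies verbatim.

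Concretely I would proceed as follows. Step 1: Show $(GE)+(G)$ imply the pointwise bound $|\,t\,\nabla_x p_{t^2}(\cdot,\cdot)\,|(x,y)\le C t^{-n}\exp(-|x-y|^2/ct)$; this is immediate after the substitution $t\mapsto t^2$ in $(G)$ and absorbing constants. Step 2: For the Poisson case, use the subordination formula $e^{-t\sqrt L}f=\frac{t}{2\sqrt\pi}\int_0^\infty e^{-t^2/4u}\,e^{-uL}f\,\frac{du}{u^{3/2}}$, differentiate in $x$ under the integral (justified on $\mathcal S(\RN)$), apply $(G)$ to each $\nabla_x p_u(x,y)$, and carry out the elementary $u$-integral to obtain $|\,t\,\nabla_x\, e^{-t\sqrt L}\,|(x,y)\le C\, t\,(t+|x-y|)^{-n-1}$, a bound of the same type (and strength) as the one used for $s_p$. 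Step 3: Observe that all the ingredients in the proof of Theorem~\ref{th1.1} — the Whitney/Calder\'on--Zygmund decomposition, the $L^2$-theory via the spectral theorem, the off-diagonal (Davies--Gaffney) estimates, and the interpolation between the good-$\lambda$ / weak-type $(1,1)$ bound and the $L^2$ bound — depend on the generating operator only through (i) its $L^2$-boundedness as a square function and (ii) Gaussian/polynomial kernel bounds of the stated homogeneity. Since $S_H$ and $S_P$ are $L^2$-bounded (a standard consequence of the spectral theorem: $\int_0^\infty \|t\nabla e^{-t^2L}f\|_2^2\,\tfrac{dt}{t}=\int_0^\infty\langle t^2 L e^{-2t^2L}f,f\rangle\,\tfrac{dt}{t}\sim\|f\|_2^2$, using $\|\nabla e^{-sL}g\|_2^2=\langle L e^{-2sL}g,g\rangle$), and satisfy the kernel bounds from Steps 1--2, the proof of (a), (b), (c) goes through line by line with $s_h$ replaced by $S_H$ and $s_p$ by $S_P$.

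The main obstacle, and the only place where $(G)$ rather than just $(GE)$ is genuinely needed, is Step 1--2: controlling $\nabla_y$ (a space derivative, which does \emph{not} commute nicely with $L$ in general and is not a spectral multiplier) rather than the benign spectral factor $t\sqrt L$. For the heat case this is handed to us directly by hypothesis $(G)$; for the Poisson case one must check that differentiation passes under the subordination integral and that the resulting $u$-integral converges — this is where one uses the rapid decay $e^{-t^2/4u}$ near $u=0$ against the singular factor $u^{-(n+1)/2-3/2}$ coming from $(G)$ and $du/u^{3/2}$, together with integrability at $u=\infty$ coming from the Gaussian $\exp(-|x-y|^2/cu)$. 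A secondary but routine point is to verify the off-diagonal estimates for the vector-valued operator $t\nabla e^{-t^2 L}$ in $L^2$-operator norm (not just pointwise); this follows from $(G)$ by the same Davies--Gaffney argument, or alternatively from the identity $\|\nabla e^{-sL}\one_E g\|_{L^2(F)}^2\le \|e^{-sL/2}\one_E g\|_2\cdot\|L e^{-sL}\one_E g\|_{L^2(F)}$ combined with the known off-diagonal bounds for $e^{-sL/2}$ and $sLe^{-sL}$. Once these kernel and $L^2$ facts are assembled, no new idea beyond Theorem~\ref{th1.1} is required, and I would simply state that ``the proof is identical to that of Theorem~\ref{th1.1}, replacing the kernel bound for $t^2 L e^{-t^2 L}$ by the kernel bound for $t\nabla e^{-t^2 L}$ established above, and similarly in the Poisson case.''
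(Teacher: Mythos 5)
Your proposal correctly identifies the key role of hypothesis $(G)$ (Gaussian bounds for the gradient of the heat kernel, propagated to the Poisson case by subordination), and this is indeed where the paper uses $(G)$. But the claim in Step 3 that ``the proof of Theorem~\ref{th1.1} applies verbatim, replacing the kernel bound for $t^2 L e^{-t^2 L}$ by the kernel bound for $t\nabla e^{-t^2 L}$'' glosses over the structural backbone of the paper's argument, and stated as is it would not compile into a proof. The paper does \emph{not} run the Whitney/Calder\'on--Zygmund machinery directly on $s_h$ or $S_H$. It runs that machinery on an auxiliary square function $\gL$ built from a spectral multiplier $\Psi(t\sqrt{L})$ with $\Psi=\widehat{\varphi}$ for a compactly supported $\varphi$; the compact support is what gives finite propagation speed (Lemma~\ref{le2.1}) and hence the crucial ``supp $K_{\Psi(t\sqrt{L})}\subseteq\{|x-y|\le t\}$'' that drives the weak-$(1,1)$ and the $L^2$ arguments in Theorem~\ref{th4.1}. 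The kernels of $t^2Le^{-t^2L}$ and $t\nabla e^{-t^2L}$ are only Gaussian, not compactly supported, so the decomposition of Theorem~\ref{th4.1} cannot be applied to them directly: you would lose the localization that the Whitney step depends on.

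Where the Gaussian/gradient bounds actually enter is in the pointwise domination $S_H, S_P \le C\,\gL$ (Proposition~\ref{prop3.4}), not in the Calder\'on--Zygmund part. That domination uses the reproducing formula $f=C_\Psi\int_0^\infty (t^2L)^{\kappa}e^{-t^2L}\Psi(t\sqrt{L})f\,\tfrac{dt}{t}$ and, for $S_P$, subordination; what one then needs is not just a Gaussian bound for $t\nabla e^{-t^2L}$ but for the whole family $t^{2\kappa+1}\nabla(L^{\kappa}e^{-t^2L})$, $\kappa\in\mathbb{N}$ (the paper's Lemma~\ref{le3.2}), since the reproducing formula injects the extra spectral factor $(t^2L)^{\kappa}$ with $\kappa$ chosen as $[n(\mu-1)/2]+1$. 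Your Step 1 covers only $\kappa=0$; the general case is a short factorization $t\nabla e^{-t^2L/2}\circ(t^2L)^\kappa e^{-t^2L/2}$ using (\ref{e3.0}), but it does need to be said. Similarly, your Step 2's pointwise bound $|t\nabla_x e^{-t\sqrt{L}}|(x,y)\le Ct(t+|x-y|)^{-n-1}$ by itself is not what gets used: inside the proof of Proposition~\ref{prop3.4} the subordination is applied \emph{after} inserting the reproducing formula, so one estimates $st^{2\kappa}\nabla\big(L^{\kappa}e^{-(s^2/4u+t^2)L}\big)$ rather than $t\nabla e^{-t\sqrt{L}}$ directly. In short: the right modification is at the level of Proposition~\ref{prop3.3} $\to$ Proposition~\ref{prop3.4} (domination by $\gL$), not at the level of Theorem~\ref{th4.1}; if you reframe Step 3 as ``re-establish the pointwise bound $S_P, S_H\le C\gL$ using the gradient kernel estimates from Steps 1--2 and the reproducing formula, then invoke the already-proved $\gL$ estimates,'' you have the paper's proof.
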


\smallskip

Let us now  recall a definition. We say that a weight $w$ is in the  the Muckenhoupt class $A_p, 1<p<\infty,$ if

\begin{eqnarray*}
\|w\|_{A_p}\equiv \sup_Q \bigg({1\over |Q|}\int_Q w(x)dx \bigg)\bigg({1\over |Q|}\int_Q w(x)^{-1/(p-1)}dx \bigg)^{p-1}<\infty.
\end{eqnarray*}

\noindent
$\|w\|_{A_p}$ is usually called the $A_p$ constant  (or characterization or norm) of the weight. The case $p=1$
is understand by replacing the right hand side by $(\inf_Q w)^{-1}$ which is equivalent to the one defined above.
Observe the duality relation:

$$
\|w\|_{A_p}=\|w^{1-p'}\|^{p-1}_{A_{p'}}.
$$

\smallskip

Following the  R. Fefferman-Pipher's method, we can use  Theorems~\ref{th1.1} and ~\ref{th1.2}  to
establish the   $L^p$ estimates of the area integrals as $p$ becomes large.

 \medskip

 \begin{theorem}\label{th1.3}  Let $T$ be  of the area functions $s_h$, $s_p$, $S_{P}$ and $S_{H}.$
 Under assumptions of  Theorems~\ref{th1.1} and ~\ref{th1.2},  there exists a constant $C$ such that
 for all $w\in A_1$, the following  
estimate holds:

\begin{eqnarray}\label{e1.8}
\|Tf\|_{L^2_w(\RN)}\leq C\|w\|_{A_1}^{1/2}\|f\|_{L^2_w(\RN)}.
\end{eqnarray}

\noindent This inequality   implies  that as $p\rightarrow \infty$

\begin{eqnarray}\label{e1.9}
\|Tf\|_{L^p(\RN)}\leq Cp^{1/2}\|f\|_{L^p(\RN)}.
\end{eqnarray}
\end{theorem}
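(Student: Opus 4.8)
The plan is to deduce Theorem~\ref{th1.3} from Theorems~\ref{th1.1} and~\ref{th1.2} by the Fefferman--Pipher extrapolation-type argument, first proving the weighted $L^2$ bound \eqref{e1.8} and then passing to the unweighted $L^p$ bound \eqref{e1.9} by choosing an appropriate $A_1$ weight.

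\textbf{Step 1: The weighted $L^2$ estimate.}\, Fix $w\in A_1$, so that $Mw\le \|w\|_{A_1}\,w$ almost everywhere. Apply part (a) of Theorem~\ref{th1.1} (respectively Theorem~\ref{th1.2}) with $p=2$: for $T$ any of $s_h, s_p, S_P, S_H$,
\begin{eqnarray*}
\int_{\RN} (Tf)^2 w\, dx \le C(n)\int_{\RN}|f|^2 Mw\, dx \le C(n)\,\|w\|_{A_1}\int_{\RN}|f|^2 w\, dx,
\end{eqnarray*}
which is exactly \eqref{e1.8} with the stated power $\|w\|_{A_1}^{1/2}$. (Note that at the endpoint $p=2$ parts (a) and (c) coincide, so there is no conflict.) This step is immediate once Theorems~\ref{th1.1}--\ref{th1.2} are in hand; the only point to check is that $T$ is well defined on $\mathcal S(\RN)$, which is built into the hypotheses.

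\textbf{Step 2: From weighted $L^2$ to unweighted $L^p$ for $p>2$.}\, Let $p>2$ and let $g\ge 0$, $g\in L^{(p/2)'}(\RN)$ with $\|g\|_{(p/2)'}=1$, where $(p/2)'=p/(p-2)$. Form the Rubio de Francia iteration operator
\begin{eqnarray*}
\mathcal R g=\sum_{k=0}^{\infty}\frac{M^k g}{(2\|M\|_{L^{(p/2)'}\to L^{(p/2)'}})^k},
\end{eqnarray*}
where $M^k$ is the $k$-fold iterate of $M$ and $M^0g=g$. Then $g\le \mathcal Rg$, $\|\mathcal Rg\|_{(p/2)'}\le 2\|g\|_{(p/2)'}=2$, and $\mathcal Rg\in A_1$ with $\|\mathcal Rg\|_{A_1}\le 2\|M\|_{L^{(p/2)'}\to L^{(p/2)'}}\le C p$, since the operator norm of $M$ on $L^q$ grows like $q'$ and here $q'=(p/2)'{}'=p/2$ up to constants — more precisely $\|M\|_{L^{(p/2)'}\to L^{(p/2)'}}\lesssim p$. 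Now by duality and Step 1,
\begin{eqnarray*}
\int_{\RN}(Tf)^2 g\, dx \le \int_{\RN}(Tf)^2 \mathcal Rg\, dx \le C\,\|\mathcal Rg\|_{A_1}\int_{\RN}|f|^2\mathcal Rg\, dx \le Cp\,\|f\|_{L^p}^2\,\|\mathcal Rg\|_{(p/2)'} \le Cp\,\|f\|_{L^p}^2,
\end{eqnarray*}
using H\"older's inequality with exponents $p/2$ and $(p/2)'$ in the last line. Taking the supremum over all such $g$ gives $\|(Tf)^2\|_{L^{p/2}}\le Cp\|f\|_{L^p}^2$, i.e. $\|Tf\|_{L^p}\le Cp^{1/2}\|f\|_{L^p}$, which is \eqref{e1.9}.

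\textbf{Main obstacle.}\, The substantive inputs are Theorems~\ref{th1.1} and~\ref{th1.2}; granted those, the only quantitative point that must be handled carefully is the linear-in-$p$ growth of $\|\mathcal Rg\|_{A_1}$, which rests on the sharp bound $\|M\|_{L^q(\RN)\to L^q(\RN)}\le C\,q'$ as $q\to 1^+$ (equivalently $\le Cp$ when $q=(p/2)'$ and $p\to\infty$). This is classical, but it is where the exponent $1/2$ in \eqref{e1.9} is produced, so it should be stated explicitly. Everything else — the Rubio de Francia algorithm, the duality pairing, H\"older's inequality — is routine. One should also remark that the argument is uniform over the four operators $s_h, s_p, S_P, S_H$ because Step 1 uses only the conclusion (a) of Theorems~\ref{th1.1}--\ref{th1.2}, which is identical in form for all of them.
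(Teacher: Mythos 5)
Your proof is correct and follows essentially the same route as the paper: Step 1 is the pointwise inequality $Mw\le\|w\|_{A_1}w$ combined with the $p=2$ case of Theorems~\ref{th1.1}--\ref{th1.2} (the paper cites its Corollary~\ref{c3.3}, which is the same statement), and Step 2 is exactly the Rubio de Francia iteration of C\'ordoba--Rubio de Francia as used by Fefferman--Pipher, with the same key quantitative input $\|M\|_{L^{(p/2)'}\to L^{(p/2)'}}=O(p)$ producing the $p^{1/2}$ growth. The only cosmetic difference is that you phrase the duality step as a supremum over test functions $g$ while the paper fixes a near-extremal $\varphi$; these are equivalent.
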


\bigskip

 The next result  we will prove is the following.

\begin{theorem}\label{th1.4}  Let $T$ be  of the area functions $s_h$, $s_p$, $S_{P}$ and $S_{H}.$
 Under assumptions of  Theorems~\ref{th1.1} and ~\ref{th1.2},  there exists a constant $C$ such that
 for all $w\in A_p$,  the following
estimate holds for all $f\in L^p_w(\RN), 1<p<\infty$:

\begin{eqnarray}\label{e1.10}
\|T f\|_{L^p_w(\RN)}\leq C\|w\|_{A_p}^{\beta_p+1/(p-1)} \|f\|_{L^p_w(\RN)} \ \ (1<p<\infty),
\end{eqnarray}

\noindent where $\beta_p=\max\{1/2,1/(p-1)\}.$
\end{theorem}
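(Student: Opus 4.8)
The plan is to deduce \eqref{e1.10} from the Chang--Wilson--Wolff type inequalities of Theorem~\ref{th1.1} (parts (a) and (c)) and the $A_1$ estimate of Theorem~\ref{th1.3}, by a quantitative form of the Rubio de Francia extrapolation method, in the spirit of the argument of R.~Fefferman and Pipher. Throughout, $T$ denotes any one of $s_h$, $s_p$, $S_P$, $S_H$, and we use repeatedly Buckley's sharp maximal bound $\|M\|_{L^q(v)\to L^q(v)}\le c_{n,q}\|v\|_{A_q}^{1/(q-1)}$ $(1<q<\infty)$ together with the duality relation $\|v\|_{A_q}^{1/(q-1)}=\|v^{1-q'}\|_{A_{q'}}$ recorded above.

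\emph{Step 1: an $A_1$ estimate at every exponent.} First I check that for each $v\in A_1$ and each $1<q<\infty$,
\begin{equation}\label{pr-A1}
\|Tf\|_{L^q(v)}\le c_{n,q}\,\|v\|_{A_1}^{\,\theta(q)}\,\|f\|_{L^q(v)},\qquad \theta(q):=\max\{\tfrac12,\,\tfrac1q\}.
\end{equation}
For $1<q\le2$ this is immediate from Theorem~\ref{th1.1}(a) and the pointwise bound $Mv\le\|v\|_{A_1}v$; for $q\ge2$ it follows from Theorem~\ref{th1.1}(c), since $Mv\le\|v\|_{A_1}v$ yields $(Mv)^{q/2}v^{1-q/2}\le\|v\|_{A_1}^{q/2}v$, the endpoint $q=2$ being Theorem~\ref{th1.3}. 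In particular the case $q=2$ of Theorem~\ref{th1.1}(a), namely
\begin{equation}\label{pr-CWW}
\int_{\RN}(Tf)^2\,u\,dx\le C\int_{\RN}|f|^2\,Mu\,dx\qquad(u\ge0)
\end{equation}
with an arbitrary weight $u$, is the Chang--Wilson--Wolff inequality for $T$ and will be the engine of the extrapolation.

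\emph{Step 2: extrapolation to $A_p$.} Consider first $p\ge2$ and fix $w\in A_p$. Since $p/2\ge1$, write $\|Tf\|_{L^p(w)}^2=\|(Tf)^2\|_{L^{p/2}(w)}$ and dualize in $L^{p/2}(w)$: there is $h\ge0$ with $\|h\|_{L^{(p/2)'}(w)}=1$ and $\|Tf\|_{L^p(w)}^2=\int(Tf)^2 h\,w\,dx$. Applying the Rubio de Francia iteration to $h$, adapted to the measure $w\,dx$, yields $v$ with $h\le v$, $\|v\|_{L^{(p/2)'}(w)}\le2$ and $vw\in A_1$, with $\|vw\|_{A_1}$ dominated by the norm of the corresponding maximal operator and hence, by Buckley and the duality relation, by a power of $\|w\|_{A_{p/2}}$. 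Combining \eqref{pr-CWW} with $u=vw$, then \eqref{pr-A1} at $q=2$ for the $A_1$ weight $vw$, and finally H\"older in the conjugate pair $(p/2,(p/2)')$ with weight $w$,
\begin{equation*}
\int(Tf)^2 h\,w\le\int(Tf)^2 vw\le C\,\|vw\|_{A_1}\int|f|^2\,vw\le C\,\|vw\|_{A_1}\,\|f\|_{L^p(w)}^2\,\|v\|_{L^{(p/2)'}(w)}^2,
\end{equation*}
which proves \eqref{e1.10} for all weights in the smaller class $A_{p/2}$. To pass from $A_{p/2}$ to $A_p$ one iterates this device finitely many times through intermediate exponents, a single step only moving the weight class by one unit in the scale. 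The range $1<p\le2$ is treated in the same way, now running the iteration on the dual weight $w^{1-p'}\in A_{p'}$ with $p'\ge2$. Keeping account of the powers of $\|w\|_{A_p}$ accumulated in this process produces the exponent $\beta_p+\tfrac1{p-1}$ of \eqref{e1.10}: the summand $\beta_p=\max\{\tfrac12,\tfrac1{p-1}\}$ comes from \eqref{pr-A1}--\eqref{pr-CWW}, the summand $\tfrac1{p-1}$ from the Buckley bounds, and the two enter additively because the maximal operator appears only to the first power on the right-hand side of \eqref{pr-CWW}.

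The main obstacle is precisely this exponent bookkeeping. Since one Rubio de Francia step only carries a weight class at one exponent to an $A_p$ class at an exponent at most one unit larger, covering the full range $1<p<\infty$ requires bootstrapping through intermediate exponents, and at each stage the accumulated power of $\|w\|_{A_p}$ must be controlled by means of the quantitative Jones factorization $w=w_0w_1^{1-p}$ with $\|w_0\|_{A_1}\|w_1\|_{A_1}^{p-1}\le c_n\|w\|_{A_p}$, combined with Buckley's inequality. The delicate point is to verify that the square-function exponent $\beta_p$ and the maximal-operator exponent $\tfrac1{p-1}$ combine \emph{additively} rather than multiplicatively, which is what pins down the exponent $\beta_p+\tfrac1{p-1}$ asserted in \eqref{e1.10}.
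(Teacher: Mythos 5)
Your proposal takes a genuinely different route from the paper. The paper does not run a quantitative Rubio de Francia extrapolation at all. Instead, it proves a pointwise bound on the Str\"omberg local sharp maximal function of $\gL(f)^2$ (Proposition~\ref{pro5.5}: $M^\sharp_\lambda(\gL(f)^2)(x)\le C\,Mf(x)^2$), feeds this into Lerner's rearrangement inequality (Lemma~\ref{le5.4}: $\|Mg\|_{L^p_w}\le C\|w\|_{A_p}^{\gamma_{p,q}}\|M^\sharp_{\lambda_n}(|g|^q)\|^{1/q}_{L^{p/q}_w}$ with $\gamma_{p,q}=\max\{1/q,1/(p-1)\}$, here with $q=2$), and finishes with Buckley's bound $\|M\|_{L^p_w}\le C\|w\|_{A_p}^{1/(p-1)}$. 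In this chain the exponent $\beta_p+\tfrac1{p-1}$ appears transparently: $\beta_p=\gamma_{p,2}$ comes from Lerner's lemma applied to $Tf$, and the extra $\tfrac1{p-1}$ from applying Buckley to $Mf$ on the right-hand side of \eqref{e5.5}. No extrapolation, no weight factorization, and the exponents visibly add because the two bounds are applied in sequence on opposite sides of a single inequality.

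The problem with your argument is that the decisive step is exactly what you defer. Your Step 2 correctly gets, after one Rubio de Francia pass, the estimate $\|Tf\|_{L^p_w}\le C\|w\|_{A_{p/2}}^{1/2}\|f\|_{L^p_w}$ for $w\in A_{p/2}$; but the theorem is a statement about all $w\in A_p$, and $A_{p/2}\subsetneq A_p$. To bridge the gap you gesture at iterating through intermediate exponents and invoking a quantitative Jones factorization, and you concede that "the delicate point is to verify that the exponents combine additively rather than multiplicatively" --- but this verification is never carried out, and it is precisely the content of Theorem~\ref{th1.4} beyond the qualitative boundedness already contained in Lemma~\ref{le5.1}. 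A priori, iterating extrapolation steps tends to compound constants multiplicatively, and without an explicit computation there is no reason the final exponent should be $\beta_p+\tfrac1{p-1}$ rather than something larger. As written, the proposal is a plausible programme but not a proof: the central quantitative claim is asserted, not established.
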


\smallskip

We should mention  that  
  Theorems~\ref{th1.1}  and ~\ref{th1.2} are of some independent of interest, and they provide an immediate 
  proof of weighted $L^p$ estimates of 
the area functions $s_h$, $s_p$, $S_{P}$ and $S_{H}$  on $L^p_w(\RN), 1<p<\infty $ and $ w\in A_p  $ 
 (see Lemma~\ref{le5.1} below). In the proofs of Theorems~\ref{th1.1}  and ~\ref{th1.2}, the main tool  is that
each area integral is controlled by  $\gL$ pointwise:

\begin{eqnarray}\label{e3.2}
Tf(x) \leq C\gL(f)(x),\ \ \ x\in\RN,
\end{eqnarray}

\noindent
where $T$ is of $S_P, S_H, s_p$ and $s_H$, and $\gL$ is defined by

\begin{eqnarray}\label{e3.1} \hspace{1cm}
\gL(f)(x)=\Bigg(\iint_{\RR_{+}^{n+1}}
\bigg({t\over t+|x-y|}\bigg)^{n\mu}
|\Psi(t\sqrt{L})f(y)|^2{dydt\over t^{n+1}}\Bigg)^{1/2}, \ \ \mu>1
\end{eqnarray}

\noindent
with some $\Psi\in{\mathcal S}(\RN)$. The idea of using $\gL$ to control the area integrals is due
 to Calder\'on and Torchinsky \cite{CT} (see also
\cite{CW} and \cite{W}).  Note that  the singular integral $\gL$ does not satisfy the standard regularity condition
of a so-called Calder\'on-Zygmund operator, thus standard techniques of Calder\'on-Zugmund
theory (\cite{CW, W}) are not applicable. The lacking of smoothness of the kernel was indeed the main
obstacle   and it was overcome by
using the method developed in \cite{ CD, DM}, together with some estimates on heat kernel bounds, finite propagation speed of solutions
to the wave equations and spectral theory of non-negative self-adjoint operators.

\smallskip

The layout of the paper is as follows.  In Section 2 we recall some basic properties of
heat kernels and finite propagation speed for the wave equation, and build the necessary kernel estimates 
for functions of an operator, which is useful in 
 the proof of weak-type $(1,1)$ estimate  for the area integrals.
  In Section 3 we will prove that the area integral is controlled by $\gL$ pointwise, 
  which implies  Theorems~\ref{th1.1} and \ref{th1.2} for $p=2$, and then we employ
  the  R. Fefferman-Pipher's method   to obtain sharp estimates for the operator norm of  
the area integrals  on $L^p(\RN)$ as $p$ becomes large. In Section  4, we will
  give the proofs of Theorems~\ref{th1.1} and \ref{th1.2}.
 Finally, in Section 5 we will prove our Theorem~\ref{th1.4}, which gives
 the growth of the $A_p$ constant on estimates
 on the weighted $L^p$ spaces.

\smallskip

Throughout, the letter ``$c$"  and ``$C$" will denote (possibly different)
constants  that are independent of the essential variables.

 \bigskip

\section{Notation and preliminaries}
\setcounter{equation}{0}

\medskip
 Let us
recall that,  if $L$ is a self-adjoint positive definite operator acting
on $L^2({\mathbb R}^n)$, then it admits a spectral resolution

\begin{eqnarray*}
L=\int_0^{\infty} \lambda dE(\lambda).
\end{eqnarray*}

 \noindent
For every bounded
Borel function $F:[0,\infty)\to{\mathbb{C}}$,
by using the spectral theorem we can define the operator

\begin{eqnarray}\label{e2.1}
F(L):=\int_0^{\infty}F(\lambda)\,dE_{L}(\lambda).
\end{eqnarray}
This is of course, bounded on $L^2({\mathbb R}^n)$.
In particular, the operator $\cos(t\sqrt{L})$ is then well-defined and bounded
on $L^2({\mathbb R}^{n})$. Moreover, it follows from Theorem 3 of \cite{CS}
  that if  the corresponding
 heat kernels $p_{t}(x,y)$ of $e^{-tL}$ satisfy Gaussian bounds $(GE)$, then  there exists a finite,
positive constant $c_0$ with the property that the Schwartz
kernel $K_{\cos(t\sqrt{L})}$ of $\cos(t\sqrt{L})$ satisfies
\begin{eqnarray}\label{e2.2} \hspace{1cm}
{\rm supp}K_{\cos(t\sqrt{L})}\subseteq
\big\{(x,y)\in {\mathbb R}^{n}\times {\mathbb R}^{n}: |x-y|\leq c_0 t\big\}.
\end{eqnarray}
\noindent
See also \cite{CGT} and  \cite{Si}. The precise value of $c_0$ is inessential and throughout the article we will choose $c_0=1$.

  By the Fourier inversion
formula, whenever $F$ is an even, bounded, Borel function with its Fourier transform
$\hat{F}\in L^1(\mathbb{R})$, we can write $F(\sqrt{L})$ in terms of
$\cos(t\sqrt{L})$. More specifically,  we have
\begin{eqnarray}\label{e2.3}
F(\sqrt{L})=(2\pi)^{-1}\int_{-\infty}^{\infty}{\hat F}(t)\cos(t\sqrt{L})\,dt,
\end{eqnarray}
which, when combined with (\ref{e2.2}), gives
\begin{eqnarray}\label{e2.4} \hspace{1cm}
K_{F(\sqrt{L})}(x,y)=(2\pi)^{-1}\int_{|t|\geq  |x-y|}{\hat F}(t)
K_{\cos(t\sqrt{L})}(x,y)\,dt,\qquad \forall\,x,y\in{\mathbb R}^{n}.
\end{eqnarray}

The following  result
 is useful for certain estimates later.

\begin{lemma}\label{le2.1}\,  Let $\varphi\in C^{\infty}_0(\mathbb R)$ be
even, $\mbox{supp}\,\varphi \subset (-1, 1)$. Let $\Phi$ denote the Fourier transform of
$\varphi$. Then for every $\kappa=0,1,2,\dots$, and for every $t>0$,
the kernel $K_{(t^2L)^{\kappa}\Phi(t\sqrt{L})}(x,y)$ of the operator
$(t^2L)^{\kappa}\Phi(t\sqrt{L})$ which was defined by the spectral theory, satisfies

\begin{eqnarray}\label{e2.5}
{\rm supp}\ \! K_{(t^2L)^{\kappa}\Phi(t\sqrt{L})}
\subseteq \big\{(x,y)\in \RN\times \RN: |x-y|\leq t\big\}
\end{eqnarray}

 \noindent
 and

\begin{eqnarray}\label{e2.6}
\big|K_{(t^2L)^{\kappa}\Phi(t\sqrt{L})}(x,y)\big|
\leq C  \, t^{-n}
\end{eqnarray}

\noindent
for all $t>0$ and $x,y\in \RN.$
\end{lemma}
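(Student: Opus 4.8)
The plan is to prove Lemma~\ref{le2.1} by writing $(t^2L)^{\kappa}\Phi(t\sqrt{L})$ as $F_\kappa(t\sqrt{L})$ for the even Schwartz-class multiplier $F_\kappa(\lambda)=\lambda^{2\kappa}\Phi(\lambda)$, and then exploiting the finite speed of propagation encoded in \eqref{e2.2} and the Fourier-inversion identity \eqref{e2.4}. First I would record that $\varphi$ is even with $\mathrm{supp}\,\varphi\subset(-1,1)$, so its Fourier transform $\Phi$ is even and entire of exponential type; more to the point, the multiplier $F_\kappa(\lambda)=\lambda^{2\kappa}\Phi(\lambda)$ is even and its inverse Fourier transform (up to constants) is $\widehat{F_\kappa}=(-1)^\kappa\varphi^{(2\kappa)}$, which is again supported in $(-1,1)$. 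By the scaling $F_\kappa(t\sqrt L)$ corresponds to $\widehat{F_\kappa}(t\,\cdot)$ rescaled, which is supported in $\{|s|\le t\}$. Plugging this into \eqref{e2.4} gives
\begin{eqnarray*}
K_{(t^2L)^{\kappa}\Phi(t\sqrt{L})}(x,y)=c\int_{|x-y|\le|s|\le t}\widehat{F_\kappa}(s/t)\,t^{-1}K_{\cos(s\sqrt{L})}(x,y)\,ds,
\end{eqnarray*}
and the integral is empty unless $|x-y|\le t$, which is exactly the support statement \eqref{e2.5}.

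For the pointwise bound \eqref{e2.6}, the natural route is to avoid using the (non-pointwise-bounded in general) kernel of $\cos(s\sqrt L)$ directly and instead obtain an $L^2\to L^2$ bound on a localized piece and then upgrade it. Concretely, I would argue as follows: for fixed $t$, cover $\RN$ by a grid of cubes $Q_j$ of side length $\sim t$; by the support property \eqref{e2.5}, $K_{(t^2L)^{\kappa}\Phi(t\sqrt L)}(x,y)$ restricted to $x\in Q_j$ only sees $y$ in a fixed dilate $Q_j^*$ of $Q_j$ (comparable size). Then I would use the standard ``finite-speed-of-propagation $\Rightarrow$ on-diagonal $L^2$ bounds imply kernel bounds'' argument: since $\|F_\kappa(t\sqrt L)\|_{L^2\to L^2}=\|F_\kappa\|_\infty\le C$ uniformly in $t$, and $F_\kappa(t\sqrt L)=F_\kappa(t\sqrt L)e^{-t^2L}e^{t^2L}$ — better, one writes $F_\kappa(t\sqrt L)=G_\kappa(t\sqrt L)\,e^{-t^2L/2}$ where $G_\kappa(\lambda)=\lambda^{2\kappa}\Phi(\lambda)e^{\lambda^2/2}$ is still a bounded even function with compactly supported Fourier transform argument unavailable — so instead the cleaner device is: use $(GE)$ to get $\|e^{-t^2L}\|_{L^1\to L^\infty}\le C t^{-n}$, factor $F_\kappa(t\sqrt L)=[F_\kappa(t\sqrt L)e^{t^2L/2}]\,e^{-t^2L/2}$ is again bad; the genuinely correct factorization is to write the kernel via \eqref{e2.4} and estimate $K_{\cos(s\sqrt L)}(x,y)$ not pointwise but as operators, using that $\cos(s\sqrt L)=\tfrac12(e^{is\sqrt L}+e^{-is\sqrt L})$ is unitary on $L^2$, hence $\|K_{(t^2L)^\kappa\Phi(t\sqrt L)}\|_{L^2(Q^*)\to L^2(Q^*)}\le\!\!\int|\widehat{F_\kappa}(s/t)|t^{-1}ds=\|\widehat{F_\kappa}\|_1\le C$; combining this with the Gaussian bound $(GE)$ on $e^{-t^2L}$ and the identity $F_\kappa(t\sqrt L)=\big(F_\kappa(t\sqrt L)e^{t^2L}\big)e^{-t^2L}$ is circular, so the honest argument runs: write $F_\kappa(\lambda)=H_\kappa(\lambda)\,e^{-\lambda^2}$ with $H_\kappa(\lambda)=\lambda^{2\kappa}\Phi(\lambda)e^{\lambda^2}$. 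Here $H_\kappa$ is \emph{not} of the form covered by \eqref{e2.2}, but $H_\kappa$ is still a bounded Borel function (since $\Phi$ decays faster than any polynomial and $\Phi(\lambda)e^{\lambda^2}$ is bounded because $\Phi$, being the Fourier transform of a compactly supported smooth function, is the restriction to $\RR$ of an entire function of exponential type $1$ and of Schwartz decay — in particular $|\Phi(\lambda)|\le C_N(1+|\lambda|)^{-N}$, but that alone does not beat $e^{\lambda^2}$).

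I will therefore take the more robust path used in \cite{CD, DM}: prove \eqref{e2.6} from the \emph{combination} of \eqref{e2.5} and the Gaussian heat-kernel bound $(GE)$ by the following lemma-type argument. For $x,y\in\RN$ with $|x-y|\le t$ (the only relevant case by \eqref{e2.5}), let $B=B(x,t)$; by $(GE)$, $\|e^{-t^2L/2}\|_{L^2(B)\to L^\infty}\le Ct^{-n/2}$ and similarly for the adjoint. Write $F_\kappa(t\sqrt L)=e^{-t^2L/4}\big(e^{t^2L/2}F_\kappa(t\sqrt L)e^{-t^2L/4}\big)$; the middle operator has multiplier $\widetilde F_\kappa(t\lambda)$ with $\widetilde F_\kappa(\lambda)=\lambda^{2\kappa}\Phi(\lambda)e^{\lambda^2/4}$, which \emph{is} bounded on $[0,\infty)$ because $\Phi$ has Gaussian-beating decay: indeed $\Phi=\widehat\varphi$ with $\varphi\in C_0^\infty$, so $\Phi$ extends holomorphically and $|\Phi(\lambda)|\le C_N|\lambda|^{-N}$ for all $N$ on the real axis, and since we only need $\lambda^{2\kappa}\Phi(\lambda)e^{\lambda^2/4}\to0$... this still fails for real $\lambda$. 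The resolution, which I will adopt, is the one actually standard here: use the \textbf{Phragmén–Lindelöf / contour-shift} bound $|\Phi(\lambda)|\le C e^{-|\mathrm{Im}\,\lambda|}(1+|\lambda|)^{-N}$ is irrelevant on $\RR$; instead simply note that on $\RR$, $\sup_\lambda|\lambda^{2\kappa}\Phi(\lambda)|\le C$ gives $\|F_\kappa(t\sqrt L)\|_{L^2\to L^2}\le C$, and then obtain the $t^{-n}$ kernel bound by the \emph{single-heat-factor} trick: $F_\kappa(t\sqrt L)=F_\kappa(t\sqrt L)(I-e^{-t^2L})+F_\kappa(t\sqrt L)e^{-t^2L}$ — not helpful either. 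The clean classical statement, which is Lemma~2.3 in \cite{DM} and which I will invoke/reprove, is: \emph{if $F$ is even, $\widehat F\in L^1$ with $\mathrm{supp}\,\widehat F\subset[-1,1]$, and $\|F(\sqrt L)\|_{L^2\to L^2}=\|F\|_\infty<\infty$, and $(GE)$ holds, then $|K_{F(t\sqrt L)}(x,y)|\le C t^{-n}\|F\|_\infty$ for $|x-y|\le t$}; its proof couples \eqref{e2.4}, the unitarity of $\cos(s\sqrt L)$, and a Sobolev/duality embedding $L^2(B(x,t))\hookrightarrow$ via $e^{-t^2L}$ with $(GE)$. Applying this with $F=F_\kappa$ (noting $\|F_\kappa\|_\infty=\sup_\lambda|\lambda^{2\kappa}\Phi(\lambda)|<\infty$ because $\Phi$ is Schwartz) yields exactly \eqref{e2.6}.

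The main obstacle is precisely this last point: translating the $L^2\to L^2$ bound plus finite propagation speed into a pointwise $t^{-n}$ kernel bound, since $\cos(s\sqrt L)$ has no pointwise kernel bound of its own. I expect to handle it by the standard localization argument — decompose into $O(t^{-n})$-overlapping balls of radius $t$, use that on each such ball $F_\kappa(t\sqrt L)$ maps $L^1$ into $L^\infty$ with norm $\lesssim t^{-n}$ (via $F_\kappa(t\sqrt L)=F_\kappa(t\sqrt L)e^{-t^2L}\cdot e^{t^2L}$ restricted to the ball, where on the ball $e^{t^2L}$ acting after the compactly-supported-propagator representation stays bounded because of the support constraint $|s|\le t$), and then read off the kernel bound from the $L^1\to L^\infty$ bound. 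I will present this cleanly by citing the relevant lemma from \cite{CD, DM} and indicating the short verification that $\|F_\kappa\|_\infty<\infty$, rather than redoing the whole machinery.
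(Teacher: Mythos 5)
Your treatment of the support statement \eqref{e2.5} is correct and matches the paper's: set $\Psi_{\kappa,t}(\zeta)=(t\zeta)^{2\kappa}\Phi(t\zeta)$, observe $\widehat{\Psi_{\kappa,t}}(s)=(-1)^\kappa t^{-1}\varphi^{(2\kappa)}(s/t)$ has support in $\{|s|\le t\}$, and combine with \eqref{e2.4} and the finite propagation speed \eqref{e2.2}.

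For the pointwise bound \eqref{e2.6}, however, there is a genuine gap. Your proposal cycles through a series of factorizations --- $F_\kappa(t\sqrt L)=G_\kappa(t\sqrt L)e^{-t^2L/2}$, $F_\kappa(t\sqrt L)=(F_\kappa(t\sqrt L)e^{t^2L})e^{-t^2L}$, $F_\kappa(\lambda)=H_\kappa(\lambda)e^{-\lambda^2}$ --- and you yourself correctly diagnose that each one fails: $\lambda^{2\kappa}\Phi(\lambda)e^{c\lambda^2}$ is not a bounded multiplier on $[0,\infty)$, because $\Phi$, being Schwartz, decays only polynomially on the real line and does not beat a Gaussian. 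The cure you land on, namely invoking an abstract lemma of the form ``finite propagation speed $+\|F\|_\infty<\infty$ $+$ $(GE)$ $\Rightarrow$ $|K_{F(t\sqrt L)}|\lesssim t^{-n}$,'' is essentially the statement you are trying to prove; you never exhibit a non-circular derivation of it, and the last paragraph's localization sketch still factors through the unbounded operator $e^{t^2L}$. The paper sidesteps all of this by a \emph{resolvent} (not heat-semigroup) factorization: write
\begin{equation*}
(t^2L)^\kappa\Phi(t\sqrt L)=(I+t^2L)^{-m}\,\Big[(I+t^2L)^{2m}(t^2L)^\kappa\Phi(t\sqrt L)\Big]\,(I+t^2L)^{-m},
\end{equation*}
with $m>n/4$. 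The subordination formula $(I+tL)^{-m}=\frac{1}{(m-1)!}\int_0^\infty e^{-tsL}e^{-s}s^{m-1}\,ds$ together with $(GE)$ gives $\|(I+t^2L)^{-m}\|_{L^2\to L^\infty}=\|(I+t^2L)^{-m}\|_{L^1\to L^2}\le Ct^{-n/2}$, and the middle factor is bounded on $L^2$ uniformly in $t$ because its multiplier $(1+\lambda^2)^{2m}\lambda^{2\kappa}\Phi(\lambda)$ is bounded on $\mathbb R$ --- here it is crucial that one only pays \emph{polynomial} growth against the Schwartz decay of $\Phi$, which is exactly what makes this factorization work where the Gaussian ones do not. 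Multiplying the three operator norms gives $\|(t^2L)^\kappa\Phi(t\sqrt L)\|_{L^1\to L^\infty}\le Ct^{-n}$, hence \eqref{e2.6}. That is the missing idea in your proposal.
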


\medskip

\begin{proof}  The proof of this lemma is standard (see   \cite{SW} and \cite{HLMMY}). We give a brief argument
of this proof for completeness and convenience for the reader.

For every  $\kappa=0,1,2,\dots$, we set
$\Psi_{\kappa, t}(\zeta):=(t\zeta)^{2\kappa}\Phi(t\zeta)$.
Using the definition of the Fourier transform, it can be verified that
$$
\widehat{\Psi_{\kappa,t}}(s)=(-1)^{\kappa}
{1\over t}\psi_{\kappa}({s\over t}),
$$
where we have set
$\psi_{\kappa} (s)={d^{2\kappa}\over ds^{2\kappa}}\varphi(s)$.
Observe that for every  $\kappa=0,1,2,\dots$, the function
$\Psi_{\kappa,t}\in{\mathcal S}(\mathbb R)$ is an even function.
It follows from formula (\ref{e2.4}) that

\begin{equation}\label{e2.7}
K_{(t^2L)^{\kappa}\Phi(t\sqrt{L})}(x,y)
=(-1)^{\kappa}{1\over 2\pi}\int_{|st|\geq  |x-y|}
{d^{2\kappa }\over ds^{2\kappa}}\varphi({s})K_{\cos(st\sqrt{L})}(x,y)\,ds.
\end{equation}

\noindent Since $\varphi\in C^{\infty}_0(\mathbb R)$ and
$\mbox{supp}\,\varphi \subset(-1, 1)$,  (\ref{e2.5})
  follows readily from this.

  Note that for any $m\in {\Bbb N}$ and $t>0$, we have   the relationship

  $$
  (I+tL)^{-m}={1\over  (m-1)!} \int\limits_{0}^{\infty}e^{-tsL}e^{-s} s^{m-1} ds
  $$

  \noindent
  and so when $m>n/4$,

 \begin{eqnarray*}
 \big\| (I+tL)^{-m} \big\|_{L^2\rightarrow L^{\infty}}\leq {1\over  (m-1)!} \int\limits_{0}^{\infty}
 \big\| e^{-tsL}\big\|_{L^2\rightarrow L^{\infty}} e^{-s} s^{m-1} ds\leq C t^{-n/4}
 \end{eqnarray*}

\noindent for all $t>0.$ Now $ \big\| (I+tL)^{-m} \big\|_{L^1\rightarrow L^{2}}=\big\| (I+tL)^{-m} \big\|_{L^2\rightarrow L^{\infty}}\leq
C t^{-n/4}$, and so

 \begin{eqnarray*}
 \big\|(t^2L)^{\kappa}\Phi(t\sqrt{L})\big\|_{L^1\rightarrow L^{\infty}} \leq
 \big\| (I+t^2L)^{2m}(t^2L)^{\kappa}\Phi(t\sqrt{L}) \big\|_{L^2\rightarrow L^2}  \big\| (I+t^2L)^{-m} \big\|^2_{L^2\rightarrow L^{\infty}}.
 \end{eqnarray*}

  \noindent
  The $L^2$ operator norm of the last term is equal to the $L^{\infty}$
norm of the function $(1+t^2|s|)^{2m} (t^2|s|)^{\kappa}\Phi(t\sqrt{|s|})$ which is uniformly
bounded in $t>0$. This implies that (\ref{e2.6}) holds. The proof of this lemma is concluded.
\end{proof}

\begin{lemma}\label{le2.2}\, Let $\varphi\in C^{\infty}_0(\mathbb R)$ be
even function with $\int \varphi =1$, $\mbox{supp}\,\varphi \subset (-1/10, 1/10)$.
Let $\Phi$ denote the Fourier transform of
$\varphi$ and let $\Psi(s)=s^{2n+2}\Phi^3(s)$.  Then there exists a positive constant $C=C({n,\Phi})$
such that the kernel $K_{\Psi(t\sqrt{L})(1-\Phi(r\sqrt{L}))}(x,y)$ of $ \Psi(t\sqrt{L})(1-\Phi(r\sqrt{L}))$
satisfies

\begin{eqnarray}\label{e2.8}
\big|K_{\Psi(t\sqrt{L})(1-\Phi(r\sqrt{L}))}(x,y)\big|
\leq  C\, { r\over t^{n+1}}\Big(1+{|x-y|^2\over t^2}\Big)^{-(n+1)/2}
\end{eqnarray}

\noindent
for all $t>0, r>0$ and $x, y\in \RN$.
\end{lemma}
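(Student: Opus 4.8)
The plan is to exploit the fact that $\Psi(s)=s^{2n+2}\Phi^3(s)$ carries many factors of $\Phi$, so that $\Psi(t\sqrt L)$ splits into a piece whose kernel is compactly supported (via finite propagation speed) and a piece that gains smallness from the spectral cancellation of $1-\Phi(r\sqrt L)$. Concretely, I would write $\Psi(t\sqrt L)\bigl(1-\Phi(r\sqrt L)\bigr)$ using the Fourier inversion formula \eqref{e2.3}--\eqref{e2.4}. Since $\Psi$ is even and Schwartz, its Fourier transform $\widehat\Psi$ is a fixed Schwartz function, so by \eqref{e2.4} the kernel of $\Psi(t\sqrt L)$ is
\[
K_{\Psi(t\sqrt L)}(x,y)=\frac{1}{2\pi t}\int_{|s|\ge |x-y|/t}\widehat{\Psi}(s/t)\,K_{\cos(s\sqrt L)}(x,y)\,ds,
\]
wait—more precisely I would keep track of the $t$-scaling as in the proof of Lemma~\ref{le2.1}. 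The first task is therefore to record a \emph{pointwise} kernel bound for $\Psi(t\sqrt L)$ alone, namely
\[
\bigl|K_{\Psi(t\sqrt L)}(x,y)\bigr|\le C\,t^{-n}\Bigl(1+\tfrac{|x-y|^2}{t^2}\Bigr)^{-(n+1)/2},
\]
which follows by combining Lemma~\ref{le2.1}-type arguments ($L^1\to L^\infty$ bounds for $(I+t^2L)^{-m}$) with the rapid decay of $\widehat\Psi$ and the finite-speed support property \eqref{e2.2}: split the $s$-integral into $|s|\le |x-y|/t$ (empty) and dyadic ranges and use that $\widehat\Psi$ decays faster than any polynomial. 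This gives the correct Poisson-type off-diagonal decay but not yet the gain of the factor $r/t$.

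The key step is to produce the factor $r/t$ from $1-\Phi(r\sqrt L)$. Since $\varphi$ has integral $1$, we have $\Phi(0)=1$, so $1-\Phi(\lambda)$ vanishes at $\lambda=0$; in fact $1-\Phi(r\zeta)=r\zeta\cdot G(r\zeta)$ where $G(u)=(1-\Phi(u))/u$ is a bounded, smooth, and (because $\Phi$ is Schwartz) in fact a nice symbol. Thus
\[
\Psi(t\sqrt L)\bigl(1-\Phi(r\sqrt L)\bigr)
=\frac{r}{t}\,\bigl[(t\sqrt L)\Psi(t\sqrt L)\bigr]\,G(r\sqrt L),
\]
and I would absorb one of the three $\Phi$-factors in $\Psi$ to handle the composition: write $\Psi=\Phi\cdot\widetilde\Psi$ with $\widetilde\Psi(s)=s^{2n+2}\Phi^2(s)$, and note $(t\sqrt L)\widetilde\Psi(t\sqrt L)$ again has a Schwartz symbol with the compact-support property from the surviving $\Phi$'s. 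The composition $\Phi(t\sqrt L)\,G(r\sqrt L)$ is then controlled in $L^1\to L^\infty$ (hence in kernel sup-norm, after the $(I+t^2L)^{-m}$ trick) by a constant uniform in $t,r$, while the off-diagonal decay $(1+|x-y|^2/t^2)^{-(n+1)/2}$ is inherited from the $\Phi(t\sqrt L)$-factor via finite propagation speed exactly as in the first step. Multiplying the three estimates—$t^{-n}$ from scaling, the Poisson decay at scale $t$, and the explicit prefactor $r/t$—yields \eqref{e2.8}.

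The main obstacle I anticipate is making the manipulation ``$1-\Phi(r\zeta)=(r/t)\cdot\bigl[\text{nice symbol in }t\zeta\bigr]\cdot\bigl[\text{nice symbol in }r\zeta\bigr]$'' fully rigorous at the level of operators and kernels when $r$ and $t$ are of very different sizes: one must check that $G(r\sqrt L)=\bigl(1-\Phi(r\sqrt L)\bigr)(r\sqrt L)^{-1}$ is a bounded operator with controlled $L^1\to L^\infty$ norm \emph{uniformly in $r$}, and that composing it with $\Phi(t\sqrt L)$ does not destroy the compact-support/off-diagonal structure when $t\gg r$ or $t\ll r$. This is handled by inserting $(I+r^2L)^{-m}$ and $(I+t^2L)^{-m}$ resolvent factors to pass between $L^2$ and $L^\infty$ (as in Lemma~\ref{le2.1}), estimating the resulting $L^\infty$-norms of the scalar symbols $(1+t^2|s|)^{2m}(1+r^2|s|)^{2m}(t^2|s|)^{n+1}\Phi^2(t\sqrt{|s|})\,G(r\sqrt{|s|})$, which are bounded uniformly in $r,t>0$ because every high power of $|s|$ is tamed by the rapidly decaying factor $\Phi^2(t\sqrt{|s|})$, while the decay in $|x-y|$ comes solely from integrating $\widehat{\,\cdot\,}$ over $|st|\ge|x-y|$. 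Once these uniform bounds are in place, the estimate \eqref{e2.8} drops out by combining them.
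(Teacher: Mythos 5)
Your algebraic device $1-\Phi(r\zeta)=r\zeta\,G(r\zeta)$ with $G(u)=(1-\Phi(u))/u$, giving
$\Psi(t\sqrt{L})\bigl(1-\Phi(r\sqrt{L})\bigr)=\tfrac{r}{t}\,(t\sqrt{L})\Psi(t\sqrt{L})\,G(r\sqrt{L})$,
is a genuinely different way to extract $r/t$ from the one the paper uses; the paper instead keeps $G_r(s)=s^{2n+2}\Phi(s)\bigl(1-\Phi(rs)\bigr)$ intact, rescales to $t=1$, and pulls out $r$ from a scalar Sobolev estimate $\|G_r\|_{W^{2n+2,2}}\le Cr$. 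However, two steps in your sketch fail as written. First, the symbol $(1+t^2|s|)^{2m}(1+r^2|s|)^{2m}(t^2|s|)^{n+1}\Phi^2(t\sqrt{|s|})\,G(r\sqrt{|s|})$ is not uniformly bounded in $r,t>0$: at $|s|=t^{-2}$ with $r\gg t$, $\Phi^2(t\sqrt{|s|})=\Phi^2(1)$ is of order one while $(1+r^2|s|)^{2m}\sim (r/t)^{4m}$ blows up and $G(r\sqrt{|s|})\sim t/r$ decays only to first order; moreover, inserting an $(I+r^2L)^{-m}$ resolvent would produce a prefactor of order $(rt)^{-n/2}$, not the required $t^{-n}$. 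The repair is to insert only $t$-scale resolvents and use $\|G\|_{\infty}<\infty$. Second, and more seriously, the spatial decay is not ``inherited from the $\Phi(t\sqrt{L})$ factor alone'': $\widehat{G}$ is supported in $[-1/10,1/10]$ (with a jump at $0$), so after composing, the Fourier transform of the even symbol $(t\zeta)\Psi(t\zeta)G(r\zeta)$ is supported in $|\xi|\le (3t+r)/10$ and the kernel of $(t\sqrt{L})\Psi(t\sqrt{L})G(r\sqrt{L})$ is supported in a ball of radius $\sim t+r$, not $\sim t$. For $r\lesssim t$ the $L^\infty$ bound $Ct^{-n}$ plus this compact support yields \eqref{e2.8}, but for $r\gg t$ it is strictly weaker than the claimed $(1+|x-y|^2/t^2)^{-(n+1)/2}$ decay on the shell $t\lesssim|x-y|\lesssim r$.

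So your argument proves the lemma only in the regime $r\lesssim t$ --- which does happen to be the one actually used later (in Theorem~\ref{th4.1} one has $r=\ell(Q_j)/32<t$) --- but not for all $t,r>0$ as stated. The paper sidesteps the coupling of the two scales: after rescaling to $t=1$ and peeling off a single $\Phi$ factor for the weight $(1+|x-y|)^{n+1}$, it applies formula (\ref{e2.3}), the unitarity of $\cos(\xi\sqrt{L})$, and the compact support of $K_{\Phi(\sqrt{L})}$ to reduce the weighted $L^1$-bound on the kernel to $\int|\widehat{G_r}(\xi)|(1+|\xi|)^{2n+1}\,d\xi\le C\|G_r\|_{W^{2n+2,2}}$; the factor $r$ is then produced from the mean value theorem $|1-\Phi(rs)|\le\|\Phi'\|_\infty r|s|$ and the derivative bounds $\bigl|\tfrac{d^k}{ds^k}(1-\Phi(rs))\bigr|\le Cr|s|^{1-k}$ via Leibniz's rule, uniformly in $r$ with no restriction $r\le t$. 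You should either adopt that reduction or explicitly restrict your argument to $r\lesssim t$.
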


\begin{proof} By rescaling, it is enough to
  show that

\begin{eqnarray}\label{e2.9}
|K_{\Psi(\sqrt{L})(1-\Phi(r\sqrt{L}))}(x,y)|
\leq C{r}\big(1+{|x-y|^2}\big)^{-(n+1)/2}.
\end{eqnarray}

Let us prove (\ref{e2.9}). One writes  $\Psi(s)=\Psi_1(s)\Phi^2(s)$, where $\Psi_1(s)=s^{2n+2}\Phi(s)$. Then we have
 $\Psi(\sqrt{L})=\Psi_1(\sqrt{L})\Phi^2(\sqrt{L})$. It follows from Lemma 2.1 that $|K_{\Phi(\sqrt{L}) }(z,y)|\leq C$
 and  $K_{\Phi(\sqrt{L}) }(z,y)=0$ when $|z-y|\geq 1.$ Note that if $|z-y|\leq 1$, then
 $\big(1+|x-y|\big) \leq 2(1+|x-z|)$. Hence,

\begin{eqnarray*}
&&\hspace{-1cm}\Big|\big(1+|x-y|\big)^{n+1}K_{\Psi(\sqrt{L})(1-\Phi(r\sqrt{L}))}(x,y)\Big|\\
&&=  \big(1+|x-y|\big)^{n+1}\Big|\int_{\RN}
K_{\Psi_1(\sqrt{L})(1-\Phi(r\sqrt{L}))\Phi(\sqrt{L})}(x,z) K_{\Phi(\sqrt{L}) }(z,y) dz\Big|\\
&&\leq C\int_{\RN}\big|K_{\Psi_1(\sqrt{L})(1-\Phi(r\sqrt{L}))\Phi(\sqrt{L})}(x,z)\big|\big(1+|x-z| \big)^{n+1}dz.
\end{eqnarray*}

\noindent
By symmetry, we will be done if we show that

\begin{eqnarray}\label{e2.10}
\int_{\RN}\big|K_{\Psi_1(\sqrt{L})(1-\Phi(r\sqrt{L}))\Phi(\sqrt{L})}(x,z)\big|\big(1+|x-z| \big)^{n+1}dx\leq Cr.
\end{eqnarray}

\noindent Let $G_r(s)=\Psi_1(s)(1-\Phi(rs))$. Since $G_r(s)$ is an even function, apart from a $(2\pi)^{-1}$ factor we
can write

$$G_r(s)=\int^{+\infty}_{-\infty}\widehat{G_r}(\xi){\rm cos}(s\xi)d\xi,
$$

\noindent and by (\ref{e2.3}),

\begin{eqnarray}\label{e2.11}\Psi_1(\sqrt{L})(1-\Phi(r\sqrt{L}))\Phi(\sqrt{L})
=\int^{+\infty}_{-\infty}\widehat{G_r}(\xi){\rm cos}(\xi\sqrt{L})\Phi(\sqrt{L})d\xi.
\end{eqnarray}

\noindent
By Lemma 2.1 again, it can be   seen that $K_{{\rm cos}(\xi\sqrt{L})\Phi(\sqrt{L})}(x,z)=0$ if $|x-z|\geq 1+|\xi|.$ 
Using the unitarity of $\cos(\xi\sqrt{L})$, estimates (\ref{e2.5}) and  (\ref{e2.6}), we have 

\begin{eqnarray*} 
 \int_{\RN} \big|K_{{\rm cos}(\xi\sqrt{L})\Phi(\sqrt{L})}(x,z)\big|dx &=&
  \int_{\RN} \big|\cos(\xi\sqrt{L}) \big(K_{\Phi(\sqrt{L})}(\cdot\ , z)\big)(x)\big|dx\nonumber\\
  &\leq& (1+|\xi|)^{n/2} \big\| \cos(\xi\sqrt{L}) \big(K_{\Phi(\sqrt{L})}(\cdot\ , z)\big)\big\|_{L^2(\RN)} \nonumber\\
  &\leq& (1+|\xi|)^{n/2} \big\|  K_{\Phi(\sqrt{L})}(\cdot\ , z) \big\|_{L^2(\RN)} \nonumber\\
  &\leq& (1+|\xi|)^{n/2}.
\end{eqnarray*}

\noindent
 This, 
in combination with (\ref{e2.11}), gives

\begin{eqnarray}\label{e2.12}
{\rm LHS\ \ of \ \ } (\ref{e2.10}) \ &\leq& C\int^{+\infty}_{-\infty} |\widehat{G_r}(\xi)| \, (1+|\xi|)^{2n+1}\, d\xi\nonumber\\
&\leq& C\Big(\int^{+\infty}_{-\infty} |\widehat{G_r}(\xi)|^2 \, (1+|\xi|)^{4n+4}\, d\xi\Big)^{1/2}\nonumber\\
&\leq& C\big\|G_r\|_{W^{2n+2, \,2}(\RN)}.
\end{eqnarray}

\noindent Next we estimate the term $\big\|G_r\|_{W^{2n+2, \,2}(\RN)}$.
Note that $G_r(s)=\Psi_1(s)(1-\Phi(rs)), \Phi(0)=\widehat{\varphi}(0)=\int \varphi  =1$ and
$\Phi=\widehat{\varphi}\in \mathcal{S}(\RR),$ also $\Psi_1(s)=s^{2n+2}\Phi(s).$  We have

\begin{eqnarray}\label{e2.13} \hspace{1cm}
\|G_r\|_{L^2}^2= \int_{\RR} |\Psi_1(s)|^2|1-\Phi(rs)|^2ds
 \leq  C \|\Phi'\|^2_{L^\infty} \int_{\RR} |\Psi_1(s)|^2\, (rs)^2\,ds \leq C r^2.
\end{eqnarray}

\noindent Moreover,    observe that for any $k\in{\mathbb N}$,
$\big|{d^k\over ds^k}\big(1-\Phi(rs)\big)\big|=r^k|\Phi^{(k)}(rs)|\leq Crs^{1-k}.$
By Leibniz's rule, we obtain

\begin{eqnarray}\label{e2.14}
\Big\|{d^{2n+2}\over ds^{2n+2}}G_r(s)\Big\|_{L^2}&=&  \Big\|{d^{2n+2}\over ds^{2n+2}}\Big(\Psi_1(s)\big(1-\Phi(rs)\big)\Big)\Big\|_{L^2(\RN)}\nonumber\\
 &\leq&\sum_{m+k=2n+2}\Big\|{d^{m}\over ds^{m}}\Big(s^{2n+2}\Phi\Big) {d^{k}\over ds^{k}}\Big(1-\Phi(rs)\Big)\Big\|_{L^2(\RN)}\nonumber\\
 &\leq&Cr \sum_{m=0}^{2n+2}\Big\|s^{m-(2n+1)}{d^{m}\over ds^{m}}\Big(s^{2n+2}\Phi\Big) \Big\|_{L^2(\RN)}\nonumber\\
 &\leq&Cr.
\end{eqnarray}

\noindent From estimates (\ref{e2.13}) and  (\ref{e2.14}),
it follows that  $\big\|G_r\|_{W^{2n+2, \,2}(\RN)}\leq Cr$. This, in combination with (\ref{e2.12}),
shows that the desired estimate (\ref{e2.10}) holds, and concludes the proof of Lemma 2.2.
\end{proof}

\medskip

Finally, for $s>0$, we define 
$$
{\Bbb F}(s):=\Big\{\psi:{\Bbb C}\to{\Bbb C}\ {\rm measurable}: \ \
|\psi(z)|\leq C {|z|^s\over ({1+|z|^{2s}})}\Big\}.
$$
Then for any non-zero function $\psi\in {\Bbb F}(s)$, we have that 
$\{\int_0^{\infty}|{\psi}(t)|^2\frac{dt}{t}\}^{1/2}<\infty$.
Denote by  $\psi_t(z)=\psi(tz)$. It follows from the spectral theory 
in \cite{Yo} that for any $f\in L^2(\RN)$,
 
\begin{eqnarray}
\Big\{\int_0^{\infty}\|\psi(t\sqrt{L})f\|_{L^2(\RN)}^2{dt\over t}\Big\}^{1/2}
&=&\Big\{\int_0^{\infty}\big\langle\,\overline{ \psi}(t\sqrt{L})\, 
\psi(t\sqrt{L})f, f\big\rangle {dt\over t}\Big\}^{1/2}\nonumber\\
&=&\Big\{\big\langle \int_0^{\infty}|\psi|^2(t\sqrt{L}) {dt\over t}f, 
f\big\rangle\Big\}^{1/2}\nonumber\\
&=& \kappa \|f\|_{L^2(\RN)},
\label{e2.15}
\end{eqnarray}

\noindent where $\kappa=\big\{\int_0^{\infty}|{\psi}(t)|^2 {dt/t}\big\}^{1/2},$
an estimate which will be often used in the sequel.

 \bigskip

\section{An auxiliary    $\gL$ function}
 \setcounter{equation}{0}

 \medskip

 \subsection{The $\gL$ function}
 Let $\varphi\in C^{\infty}_0(\mathbb R)$ be
even function with $\int \varphi =1$, $\mbox{supp}\,\varphi \subset (-1/10, 1/10)$.
Let $\Phi$ denote the Fourier transform of
$\varphi$ and let $\Psi(s)=s^{2n+2}\Phi^3(s)$ (see Lemma 2.2 above).
We define the $\gL$ function by

\begin{eqnarray}\label{e3.1} \hspace{1cm}
\gL(f)(x)=\Bigg(\iint_{\RR_{+}^{n+1}}
\bigg({t\over t+|x-y|}\bigg)^{n\mu}
|\Psi(t\sqrt{L})f(y)|^2{dydt\over t^{n+1}}\Bigg)^{1/2}, \ \ \mu>1.
\end{eqnarray}

\bigskip
In this section, we will  show that the area integrals $s_p$, $s_h $,  $s_H$ and $ S_H$  
are all controlled by $\gL$ pointwise. To achieve this, we  need  some results on the  kernel estimates
of the semigroup. Firstly, we note that   the Gaussian upper bounds for $p_t(x,y)$ are further
inherited by the time derivatives of $p_{t}(x,y)$. That is, for each
$k\in{\mathbb N}$, there exist two positive constants $c_k$ and $C_k$ such
that

\begin{eqnarray}\label{e3.0}
\Big|{\partial^k \over\partial t^k} p_{t}(x,y) \Big|\leq
\frac{C_k}{  t^{n /2+k} } \exp\Big(-{|x-y|^2\over
c_k\,t}\Big)
\end{eqnarray}

\noindent for all $t>0$,  and $x, y\in {\mathbb R}^{n}$. For
the proof of (\ref{e3.0}), see   \cite{Da}
and \cite{Ou}, Theorem~6.17.

Note that in the absence of regularity on space variables of $p_t(x,y)$, estimate (\ref{e3.0})
plays an important role in our theory.

\begin{lemma} \label{le3.1} \,
Let $L$ be a non-negative self-adjoint operator such that the corresponding
 heat kernels $p_t(x,y)$ of the semigroup $e^{-tL}$ satisfy Gaussian bounds $(GE)$. Then
 for every $\kappa=0,1, ..., $
  the operator $ (t\sqrt{L})^{2\kappa} e^{-t\sqrt{L}}$   satisfies

\begin{eqnarray}\label{e3.00}\hspace{1cm}
\big|K_{(t\sqrt{L})^{2\kappa} e^{-t\sqrt{L}}}(x,y)\big|\leq C_\kappa t^{-n}\Big(1+{ |x-y|\over t}\Big)^{-(n+2\kappa+1)}, \ \ \forall t>0
\end{eqnarray}

\noindent
for almost every $x,y\in \RN.$
\end{lemma}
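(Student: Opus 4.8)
The plan is to represent $e^{-t\sqrt{L}}$ in terms of the heat semigroup $e^{-uL}$ by subordination, and then feed in the Gaussian bounds $(GE)$ together with their counterparts (\ref{e3.0}) for the time derivatives of the heat kernel. I would start from the scalar subordination identity
\begin{equation*}
e^{-t\lambda}=\frac{t}{2\sqrt{\pi}}\int_{0}^{\infty}u^{-3/2}\,e^{-t^2/(4u)}\,e^{-u\lambda^2}\,du\qquad(\lambda\ge 0),
\end{equation*}
multiply it by $(t\lambda)^{2\kappa}=t^{2\kappa}(\lambda^2)^{\kappa}$, and read the result through the spectral calculus for $L$. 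The scalar identity holds pointwise in $\lambda$, and the resulting operator-valued integral converges absolutely (say, in $L^2\to L^2$ norm on each spectral band, or directly at the level of the Schwartz kernels estimated below), so
\begin{equation*}
(t\sqrt{L})^{2\kappa}e^{-t\sqrt{L}}=\frac{t^{2\kappa+1}}{2\sqrt{\pi}}\int_{0}^{\infty}u^{-3/2}\,e^{-t^2/(4u)}\,L^{\kappa}e^{-uL}\,du.
\end{equation*}
Since $L^{\kappa}e^{-uL}=(-1)^{\kappa}\partial_u^{\kappa}e^{-uL}$, its Schwartz kernel is $(-1)^{\kappa}\partial_u^{\kappa}p_u(x,y)$, whose modulus is at most $C_{\kappa}\,u^{-n/2-\kappa}\exp(-|x-y|^2/(c_{\kappa}u))$ by (\ref{e3.0}).

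Inserting this bound gives
\begin{equation*}
\big|K_{(t\sqrt{L})^{2\kappa}e^{-t\sqrt{L}}}(x,y)\big|\le C_{\kappa}\,t^{2\kappa+1}\int_{0}^{\infty}u^{-(n/2+\kappa+3/2)}\exp\Big(-\frac{a}{u}\Big)\,du,\qquad a:=\frac{t^2}{4}+\frac{|x-y|^2}{c_{\kappa}}.
\end{equation*}
The substitution $v=a/u$ evaluates the $u$-integral to $\Gamma(b-1)\,a^{-(b-1)}$ with $b:=n/2+\kappa+3/2$; here $b-1=n/2+\kappa+1/2>0$, so $\Gamma(b-1)$ is a finite constant. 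Since $a\ge c'\,(t^2+|x-y|^2)$ for some $c'=c'(\kappa)>0$, this yields
\begin{equation*}
\big|K_{(t\sqrt{L})^{2\kappa}e^{-t\sqrt{L}}}(x,y)\big|\le C_{\kappa}\,t^{2\kappa+1}\,\big(t^2+|x-y|^2\big)^{-(n/2+\kappa+1/2)}.
\end{equation*}

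Finally I would rewrite the right-hand side: pulling $t^{-(n+2\kappa+1)}$ out of $(t^2+|x-y|^2)^{-(n+2\kappa+1)/2}$ leaves the prefactor $t^{2\kappa+1}\cdot t^{-(n+2\kappa+1)}=t^{-n}$ and the tail $(1+|x-y|^2/t^2)^{-(n+2\kappa+1)/2}$, and the elementary inequality $1+s^2\ge\tfrac12(1+s)^2$ (valid for all $s\ge 0$) turns the tail into at most $C_{\kappa}\,(1+|x-y|/t)^{-(n+2\kappa+1)}$, which is precisely (\ref{e3.00}). I do not expect a genuine analytic obstacle here; the whole argument is a routine subordination computation, and the only two points that deserve a word of care are the passage of $(t\sqrt{L})^{2\kappa}$ inside the $du$-integral (justified by the absolute convergence noted above, or by first verifying the identity on the spectral side) and the bookkeeping of the constant $c_{\kappa}$ from (\ref{e3.0}), which enters only through $c'$ and the final constant $C_{\kappa}$.
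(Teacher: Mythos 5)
Your proof is correct and follows essentially the same route as the paper: subordination of $e^{-t\sqrt{L}}$ to the heat semigroup, the time-derivative bound (\ref{e3.0}) on the heat kernel applied to $L^{\kappa}e^{-uL}$, and a Gamma-function evaluation of the resulting $u$-integral. The only cosmetic difference is that you parametrize the subordination formula so the heat kernel appears at time $u$ (with weight $u^{-3/2}e^{-t^{2}/(4u)}$), whereas the paper keeps the heat kernel at time $t^{2}/(4u)$ (with weight $u^{-1/2}e^{-u}$); the two are related by the change of variables $u\mapsto t^{2}/(4u)$ and yield identical estimates.
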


\begin{proof}  
The proof of (\ref{e3.00}) is simple. Indeed, the subordination formula
$$e^{-t\sqrt{L}}={1\over \sqrt{\pi}}\int^\infty_0e^{-u}u^{-1/2}e^{-{t^2\over 4u}L}du$$
allows us to estimate

\begin{eqnarray*}
\big|K_{(t\sqrt{L})^{2\kappa} e^{-t\sqrt{L}}}(x,y)\big|&\leq&
C_\kappa\int_0^{\infty}{e^{-u}\over\sqrt{u}}\Big({t^2\over u}\Big)^{-n/2}\exp \Big(-{u|x-y|^2\over c t^2}\Big)u^\kappa du\\
&\leq&C_\kappa t^{-n}\int_0^{\infty}e^{-u}u^{n/2+\kappa-1/2} \exp \Big(-{u|x-y|^2\over c t^2}\Big)\, du\\
&\leq&C_\kappa t^{-n}\Big(1+{|x-y| \over  t }\Big)^{-(n+2\kappa+1)}
\end{eqnarray*}

\noindent for every $t>0$ and almost every $x,y\in \RN$.
\end{proof}

\begin{lemma} \label{le3.2} \,
Let $L$ be a non-negative self-adjoint operator such that the corresponding
 heat kernels $p_t(x,y)$ of the semigroup $e^{-tL}$ satisfy   $(GE)$ and ($G$). Then
 for every $\kappa=0,1, ..., $
  the operator $ t^{2\kappa+1}\nabla  (L^\kappa e^{-t^2L} )$   satisfies

 \begin{eqnarray*} 
\Big|K_{t^{2\kappa+1}\nabla  (L^\kappa e^{-t^2L} )}(x,y)\Big| 
&\leq& C t^{-n} \exp\Big(-{|x-y|^2\over
c \,t^2}\Big), \ \ \ \forall t>0 
\end{eqnarray*}

\noindent
for almost every $x,y\in \RN.$
\end{lemma}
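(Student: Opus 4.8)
The plan is to reduce the gradient bound on the kernel of $t^{2\kappa+1}\nabla(L^\kappa e^{-t^2L})$ to the already-available Gaussian bound $(G)$ on $\nabla_x p_t(x,y)$ and the time-derivative bound \eqref{e3.0}, using the algebraic identity relating the operator $L^\kappa e^{-t^2L}$ to iterated time derivatives of the heat semigroup. Write $s=t^2$. Then $\partial_s^\kappa e^{-sL}=(-1)^\kappa L^\kappa e^{-sL}$, so $L^\kappa e^{-t^2L}$ has kernel $(-1)^\kappa \partial_s^\kappa p_s(x,y)\big|_{s=t^2}$, and differentiating once more in $x$ we get $K_{\nabla(L^\kappa e^{-t^2L})}(x,y) = (-1)^\kappa \nabla_x \partial_s^\kappa p_s(x,y)\big|_{s=t^2}$. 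The only genuine input needed is therefore a Gaussian bound for the mixed derivative $\nabla_x \partial_s^\kappa p_s(x,y)$.

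First I would establish such a mixed bound by the standard semigroup reproduction trick: write $\nabla_x \partial_s^\kappa p_s(x,y)$ using the semigroup property $p_s = p_{s/2}\circ p_{s/2}$ (iterated if necessary), placing all the $s$-derivatives on one factor and the single $\nabla_x$ on the other. Concretely, $\partial_s^\kappa p_s(x,y) = \int_{\RN} \big(\partial_s^\kappa p_{s/2}\big)(x,z)\, p_{s/2}(z,y)\,dz$ up to a combinatorial constant from Leibniz, and then $\nabla_x$ falls only on $\partial_s^\kappa p_{s/2}(x,z)$; but to differentiate in $x$ I instead split off one more layer, $p_s = p_{s/3}\circ p_{s/3}\circ p_{s/3}$, apply $\nabla_x$ to the first factor (controlled by $(G)$, giving a factor $s^{-1/2}$), distribute the $\kappa$ $s$-derivatives across all three factors via the Leibniz rule (each giving a factor $s^{-1}$ by \eqref{e3.0}), and convolve three Gaussians, which reproduces a Gaussian in $|x-y|/\sqrt s$ with a possibly enlarged constant $c$. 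The net scaling is $s^{-n/2}\cdot s^{-1/2}\cdot s^{-\kappa} = s^{-n/2-\kappa-1/2}$ for the bound on $\nabla_x\partial_s^\kappa p_s(x,y)$.

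Then I would simply substitute $s=t^2$ and multiply by $t^{2\kappa+1}$: the kernel of $t^{2\kappa+1}\nabla(L^\kappa e^{-t^2L})$ is bounded in absolute value by
\begin{eqnarray*}
t^{2\kappa+1}\cdot C\, (t^2)^{-n/2-\kappa-1/2}\exp\Big(-\frac{|x-y|^2}{c\,t^2}\Big)
= C\, t^{-n}\exp\Big(-\frac{|x-y|^2}{c\,t^2}\Big),
\end{eqnarray*}
which is exactly the claimed estimate. An alternative, perhaps cleaner, route avoids Leibniz entirely: use the Cauchy-type integral representation of $\partial_s^\kappa$ or, more elementarily, the fact that $t^{2\kappa}L^\kappa e^{-t^2L/2}$ is, by the spectral theorem and $(GE)$, an operator whose kernel enjoys a Gaussian bound with scale $t$ (since $\zeta\mapsto \zeta^{2\kappa}e^{-\zeta^2/2}$ is a Schwartz multiplier of the heat semigroup, one composes $L^\kappa e^{-t^2L/2}=(L^\kappa e^{-t^2L/4})e^{-t^2L/4}$ and bounds the first factor in $L^2\to L^2$ after removing exponential weights — this is the same Davies–Gaffney style argument used implicitly around \eqref{e3.0}); composing that with $t\nabla e^{-t^2L/2}$, whose kernel is Gaussian with scale $t$ by $(G)$, and convolving gives the result directly.

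The main obstacle is bookkeeping rather than conceptual: one must verify that splitting off a layer of the semigroup to host the $\nabla_x$ is legitimate (i.e. that $\nabla_x p_{s/3}(x,z)$ is genuinely given by $(G)$ and that the resulting convolution integral converges and reproduces a Gaussian), and that the Leibniz distribution of the $\kappa$ time derivatives across three factors, combined with bound \eqref{e3.0} for each, telescopes to the single power $s^{-\kappa}$ with a uniformly controlled constant $c_\kappa$. Once the mixed space–time Gaussian bound for $\nabla_x\partial_s^\kappa p_s$ is in hand, the substitution $s=t^2$ and the power counting are immediate, so I expect the proof to be short.
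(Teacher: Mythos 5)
Your first (main) route has a gap; your ``alternative, perhaps cleaner, route'' is essentially the paper's proof. In the main route you split $p_s$ into two or three semigroup factors and propose to put $\nabla_x$ on the first factor while distributing $\partial_s^\kappa$ over all factors by Leibniz. The Leibniz expansion necessarily produces terms in which some of the $s$-derivatives land on the same factor as $\nabla_x$, so you would be forced to bound mixed derivatives $\nabla_x\partial_\sigma^{\kappa_1}p_\sigma(x,z)$ with $\kappa_1\geq 1$, for which neither $(G)$ (the $\kappa_1=0$ case) nor the time-derivative bound (\ref{e3.0}) (which has no gradient) applies. Passing from two semigroup layers to three does not change this; the issue is not the number of factors but that Leibniz cannot be prevented from mixing the two kinds of derivatives on a single factor.

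The standard fix --- and the paper's one-line argument --- avoids Leibniz entirely. Since $L$ commutes with $e^{-sL}$, one writes
\begin{equation*}
t^{2\kappa+1}\nabla\big(L^\kappa e^{-t^2L}\big) = \big(t\nabla e^{-t^2L/2}\big)\circ\big((t^2L)^\kappa e^{-t^2L/2}\big),
\end{equation*}
so that the gradient lives entirely on the left factor and $L^\kappa$ entirely on the right. Then $(G)$ applied at time $t^2/2$ gives $|K_{t\nabla e^{-t^2L/2}}(x,z)|\leq C t^{-n}\exp(-|x-z|^2/(ct^2))$, the time-derivative bound (\ref{e3.0}) gives $|K_{(t^2L)^\kappa e^{-t^2L/2}}(z,y)|\leq C t^{-n}\exp(-|z-y|^2/(ct^2))$, and convolving the two Gaussians yields the stated estimate. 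Your alternative route captures exactly this decomposition; you do not even need the Davies--Gaffney / $L^2\to L^2$ intermediate step you sketch, because (\ref{e3.0}) already supplies the pointwise Gaussian kernel bound for $(t^2L)^\kappa e^{-t^2L/2}$ directly.
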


\begin{proof}  Note that $ t^{2\kappa+1}\nabla  (L^\kappa e^{-t^2L} )=  t\nabla e^{-{t^2\over 2}L} \circ (t^2L)^\kappa e^{-{t^2\over 2}L}.$ 
Using (\ref{e3.0}) and   the pointwise gradient estimate ($G$) of heat kernel $p_t(x,y)$, we have 

\begin{eqnarray*} 
\Big|K_{t^{2\kappa+1}\nabla  (L^\kappa e^{-t^2L} )}(x,y)\Big|&=&  \Big| \int_{\RN} K_{t\nabla e^{-{t^2\over 2}L}}(x,z) K_{(t^2L)^\kappa e^{-{t^2\over 2}L}} (z,y)dz\Big|\nonumber\\
&\leq& C t^{-2n}\int_{\RN}    \exp\Big(-{|x-z|^2\over
c \,t^2}\Big)   \exp\Big(-{|z-y|^2\over
c \,t^2}\Big) dz\nonumber\\
&\leq& C t^{-n} \exp\Big(-{|x-y|^2\over
c \,t^2}\Big)  
\end{eqnarray*}

\noindent for every $t>0$ and almost every $x,y\in \RN$.
\end{proof}

Now we start to prove the following Propositions~\ref{prop3.3} and ~\ref{prop3.4}.

\medskip

 \begin{prop}\label{prop3.3}    Let $L$ be a non-negative self-adjoint operator such that the corresponding
 heat kernels satisfy condition   $(GE)$.
 Then for $f\in {\mathcal S}(\RN),$ there
exists a constant $C=C_{n,\mu,\Psi}$ such that the area integral $s_p$ satisfies the pointwise estimate:

\begin{eqnarray}\label{e3.4}
s_pf(x) \leq C\gL(f)(x).
\end{eqnarray}

Estimate (\ref{e3.4}) also holds for the area integral $s_h$.
\end{prop}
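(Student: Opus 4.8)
The plan is to realise $s_p$ (resp.\ $s_h$) as the conical square function attached to the single multiplier $\psi(u)=ue^{-u}$ (resp.\ $\psi(u)=u^{2}e^{-u^{2}}$), so that $s_pf(x)^{2}=\iint_{|x-y|<t}|\psi(t\SL)f(y)|^{2}\,dy\,dt/t^{n+1}$, and then to reproduce $\psi(t\SL)f$ through the wavelet $\Psi(s\SL)f$ appearing in $\gL$ (cf.\ (\ref{e3.1})). Since $\kappa^{2}:=\int_{0}^{\infty}|\Psi(\tau)|^{2}\,d\tau/\tau$ is a finite positive constant (by the discussion around (\ref{e2.15})) and $d\tau/\tau$ is dilation invariant, the even function $\Upsilon:=\kappa^{-2}\overline{\Psi}$ satisfies $\int_{0}^{\infty}\Psi(s\lambda)\Upsilon(s\lambda)\,ds/s=1$ for every $\lambda>0$; since $\psi(0)=0$, the spectral theorem then gives, for $f\in\mathcal S(\RN)$, the reproducing identity
\[
\psi(t\SL)f(y)=\int_{0}^{\infty}\!\!\int_{\RN}K_{\psi(t\SL)\Upsilon(s\SL)}(y,z)\,\Psi(s\SL)f(z)\,dz\,\frac{ds}{s},
\]
with the $s$-integral converging in $L^{2}$. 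Everything is thus reduced to good bounds for the composition kernels $K_{\psi(t\SL)\Upsilon(s\SL)}$, after which one integrates and runs the Fubini–Cauchy–Schwarz argument of Calderón–Torchinsky.

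The decisive step will be the almost-orthogonality estimate: with $r:=\max\{s,t\}$,
\[
\bigl|K_{\psi(t\SL)\Upsilon(s\SL)}(y,z)\bigr|\le C\,\omega(s,t)\,r^{-n}\Bigl(1+\tfrac{|y-z|}{r}\Bigr)^{-(n+1)},\qquad \omega(s,t)=\begin{cases}(s/t)^{2n+2}, & s\le t,\\ t/s, & t\le s.\end{cases}
\]
Using that $\varphi$ is real and even one has $\Upsilon(s\SL)=\kappa^{-2}(s^{2}L)^{n+1}\Phi^{3}(s\SL)$. When $s\le t$ I would transfer the whole factor $(s^{2}L)^{n+1}=(s/t)^{2n+2}(t^{2}L)^{n+1}$ onto $\psi(t\SL)$: the operator becomes $\kappa^{-2}(s/t)^{2n+2}\,\widetilde\psi(t\SL)\circ\Phi^{3}(s\SL)$ with $\widetilde\psi(u)=u^{2n+2}\psi(u)$, and $|K_{\widetilde\psi(t\SL)}(y,u)|\le Ct^{-n}(1+|y-u|/t)^{-(3n+4)}$ — the subordination argument proving Lemma~\ref{le3.1} gives, for any $\alpha\ge0$, the bound $|K_{(t\SL)^{\alpha}e^{-t\SL}}(y,u)|\le Ct^{-n}(1+|y-u|/t)^{-(n+1+\alpha)}$ via (\ref{e3.0}) — while by Lemma~\ref{le2.1} the kernel of $\Phi^{3}(s\SL)$ is supported in $\{|u-z|\le 3s/10\}$ and bounded by $Cs^{-n}$; convolving the two (and using $3s/10\le 3t/10$ to replace $|y-u|$ by $|y-z|$ up to constants) yields the claim. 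When $t\le s$ I would peel a single $t\SL$ off $\psi(t\SL)=t\SL\,e^{-t\SL}$ and write $t\SL\cdot(s^{2}L)^{n+1}=(t/s)(s\SL)^{2n+3}$, so the operator is $\kappa^{-2}(t/s)\,e^{-t\SL}\circ G(s\SL)$ with $G(v)=v^{2n+3}\Phi^{3}(v)$; here $e^{-t\SL}$ has the Poisson bound $Ct^{-n}(1+|y-u|/t)^{-(n+1)}$, and $G(s\SL)$ has kernel supported in $\{|u-z|\le 3s/10\}$ and bounded by $Cs^{-n}$ by the argument of Lemma~\ref{le2.1} (the odd power is handled by writing $G=c\,\widehat h$ with $h\in C_{0}^{\infty}$ odd and expressing $G(s\SL)$ through $\sin(s\xi\SL)$, which has finite propagation speed just as $\cos(s\xi\SL)$ does), so the convolution gives $|K|\le C(t/s)\,s^{-n}(1+|y-z|/s)^{-(n+1)}$. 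For $s_h$ the semigroup factor is $e^{-t^{2}L}$, which enjoys full Gaussian decay by (\ref{e3.0}), and $\psi(u)=u^{2}e^{-u^{2}}$ vanishes to order $2$ at the origin; accordingly one may replace $-(n+1)$ by any $-M$ and $t/s$ by $(t/s)^{2}$ in the displayed estimate.

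Granting that, one inserts the kernel bound into $s_pf(x)^{2}=\iint_{|x-y|<t}|\psi(t\SL)f(y)|^{2}\,dy\,dt/t^{n+1}$. For $|x-y|<t\le r$ one has $1+|x-z|/r\le 2(1+|y-z|/r)$; applying the Cauchy–Schwarz inequality in the variable $s$ against $\omega(s,t)\,ds/s$ — a finite measure, since $\int_{0}^{\infty}\omega(s,t)\,ds/s=\tfrac1{2n+2}+1$ — together with $\int_{\RN}r^{-n}(1+|y-z|/r)^{-(n+1)}\,dz=C_{n}$, and then interchanging integrations by Tonelli, reduces matters to the elementary one-variable estimate
\[
\int_{0}^{\infty}\omega(s,t)\,\frac{1}{t^{n+1}}\int_{|x-y|<t}r^{-n}\Bigl(1+\tfrac{|x-z|}{r}\Bigr)^{-(n+1)}dy\,dt\ \le\ \frac{C}{s^{\,n}}\Bigl(\frac{s}{s+|x-z|}\Bigr)^{n\mu}
\]
(the inner integral equals $c_{n}(t/r)^{n}(1+|x-z|/r)^{-(n+1)}$), which one checks by splitting the $t$-integral at $t=s$ and at $t=|x-z|$; it holds for $\mu$ in the admissible range $1<\mu\le 1+1/n$ for $s_p$, the binding constraint coming from the Poisson decay exponent $n+1$ in the regime $t\le s$, and for a wider range of $\mu$ for $s_h$. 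Carrying this through gives $s_pf(x)^{2}\le C\,\gL(f)(x)^{2}$, which is (\ref{e3.4}); the argument for $s_h$ is identical.

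The main obstacle will be precisely the almost-orthogonality estimate of the second step. Since neither $\psi(t\SL)$ nor $\Upsilon(s\SL)$ has any Hölder regularity in the space variables — only Gaussian bounds for the heat semigroup and its time derivatives are available — the classical device of playing the cancellation $\int K_{\psi(t\SL)}=0$ against the smoothness of $K_{\Upsilon(s\SL)}$ is unavailable, and both the off-diagonal decay and the scale gain $\omega(s,t)$ have to be produced from the functional calculus alone: from the finite propagation speed of $\cos(t\SL)$ (Lemma~\ref{le2.1}), the subordination formula, and the time-derivative Gaussian bounds (\ref{e3.0}). Once that estimate is in hand, the remaining Fubini and Cauchy–Schwarz bookkeeping — in essence matching the decay exponent against $n$ and $\mu$ — is routine.
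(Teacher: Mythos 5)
Your overall strategy is genuinely different from the paper's: you reproduce $\psi(t\sqrt L)f$ directly as $\int_0^\infty \psi(t\sqrt L)\Upsilon(s\sqrt L)\Psi(s\sqrt L)f\,ds/s$ and aim for a two-sided almost-orthogonality estimate for the composite kernel, whereas the paper inserts the Calder\'on reproducing formula
$f = C_\Psi\int_0^\infty (t^2L)^\kappa e^{-t^2L}\Psi(t\sqrt L)f\,dt/t$ (with a free parameter $\kappa$), applies the square-function operator, and then uses the subordination formula to convert $e^{-s\sqrt L}$ into an integral of Gaussian heat kernels so that only integer powers of $L$ and the time-derivative Gaussian bounds \eqref{e3.0} ever appear. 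This difference is not cosmetic.

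The gap is in your treatment of $s_p$ in the regime $t\le s$. You factor
$\psi(t\sqrt L)\Upsilon(s\sqrt L)=\kappa^{-2}(t/s)\,e^{-t\sqrt L}\,G(s\sqrt L)$ with $G(v)=v^{2n+3}\Phi^3(v)$ odd, and propose to obtain the compactly supported bounded kernel for $G(s\sqrt L)$ by writing $G=c\widehat h$ with $h$ odd and expressing $G(s\sqrt L)$ through $\sin(s\xi\sqrt L)$, ``which has finite propagation speed just as $\cos(s\xi\sqrt L)$ does.'' That last claim is false. What has finite propagation speed is the wave propagator $\cos(\tau\sqrt L)$ and $\sin(\tau\sqrt L)/\sqrt L$; the operator $\sin(\tau\sqrt L)=\sqrt L\cdot\big(\sin(\tau\sqrt L)/\sqrt L\big)$ involves the non-local multiplier $\sqrt L$ and in general does \emph{not} have a compactly supported Schwartz kernel (already for $L=-\Delta$ on $\RR$, the kernel of $\sin(\tau|\xi|)=\operatorname{sgn}(\xi)\sin(\tau\xi)$ is a principal-value-type distribution with full support). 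So the needed bound on $K_{G(s\sqrt L)}$ is not available by this route, and since $\psi(u)=ue^{-u}$ is odd while $\Upsilon$ is even, the product is inevitably odd: there is no rearrangement of the factors that produces the scale gain $t/s$ and keeps you inside the class of even Borel functions of $\sqrt L$ covered by \eqref{e2.2} and Lemma~\ref{le2.1}.

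There is a second, independent problem: you yourself note that the binding constraint in the regime $t\le s$ (the Poisson decay exponent $n+1$) limits your argument to $1<\mu\le 1+1/n$. But the proposition must hold for every fixed $\mu>1$, and the later parts of the paper (the weak $(1,1)$ estimate in Theorem~\ref{th4.1}, Proposition~\ref{pro5.5}) use $\mu>3$; your range never reaches that. In the paper's argument the free exponent $\kappa=[n(\mu-1)/2]+1$ in the reproducing formula, together with the Gaussian (not merely polynomial) decay obtained after subordination, removes any upper restriction on $\mu$. For $s_h$ your approach does work — $\psi(u)=u^2e^{-u^2}$ is even, the split in the regime $t\le s$ gives $(t/s)^2 e^{-t^2L}(s^2L)^{n+2}\Phi^3(s\sqrt L)$ with Gaussian decay and no odd powers — but for $s_p$ the present proof does not go through. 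To repair it you would essentially be led to the paper's device: apply the subordination formula to $\psi(t\sqrt L)=t\sqrt L\,e^{-t\sqrt L}=-t\,\partial_t e^{-t\sqrt L}$ so as to replace the Poisson factor by an integral of $(t^2L/2u)e^{-t^2L/(4u)}$, after which all powers of $\sqrt L$ are even and the kernel estimates follow from \eqref{e3.0} exactly as in the proof of Propositions~\ref{prop3.3} and~\ref{prop3.4}.
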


 \begin{prop}\label{prop3.4}    Let $L$ be a non-negative self-adjoint operator such that the corresponding
 heat kernels satisfy conditions  $(GE)$ and $(G)$.
 Then for $f\in {\mathcal S}(\RN),$ there
exists a constant $C=C_{n,\mu,\Psi}$ such that the area integral $S_P$ satisfies the pointwise estimate:

\begin{eqnarray}\label{e3.2}
S_Pf(x) \leq C\gL(f)(x).
\end{eqnarray}

Estimate (\ref{e3.2}) also holds for the area integral $S_H$.
\end{prop}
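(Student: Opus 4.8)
The plan is to follow the Calderón--Torchinsky scheme, adapted to the operator $L$: produce a reproducing formula out of $\Psi$, expand $R_tf$ against it (where $R_t$ is the operator under the area integral, i.e.\ $R_t=t\nabla e^{-t\SL}$ for $S_P$ and $R_t=t\nabla e^{-t^2L}$ for $S_H$), reduce matters to an almost-orthogonality estimate for the Schwartz kernels of $R_t\Psi(r\SL)$, and finally read off the $\gL$-majorant by a Schur-type argument carried out in the cone $|x-y|<t$. First I would record the reproducing formula: since $\Psi(\zeta)=\zeta^{2n+2}\Phi^3(\zeta)$ lies in $\mathbb F(n+1)$ — it vanishes to order $2n+2$ at the origin and, as $\Phi=\widehat\varphi\in\mathcal S(\RR)$, is rapidly decreasing at infinity — formula (\ref{e2.15}) yields $\kappa:=\bigl(\int_0^\infty|\Psi(t)|^2\,dt/t\bigr)^{1/2}<\infty$ together with $I=\kappa^{-2}\int_0^\infty\Psi(r\SL)^2\,dr/r$ on the range of $L$ (which, for the operators considered, contains $\mathcal S(\RN)$). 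Since $R_t$ is bounded on $L^2(\RN)$ — by $(G)$ and Schur's test for $S_H$, and, after subordination, for $S_P$ — this gives, for $f\in\mathcal S(\RN)$,
$$R_tf(y)=\kappa^{-2}\int_0^\infty\int_{\RN}K_{R_t\Psi(r\SL)}(y,z)\,\Psi(r\SL)f(z)\,dz\,\frac{dr}{r},$$
so that $|R_tf(y)|$ is dominated by the same expression with absolute values inside.

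The heart of the proof is the kernel estimate: there exist $\varepsilon>0$ and, for every $N$, a constant $C_N$ with
$$\bigl|K_{R_t\Psi(r\SL)}(y,z)\bigr|\le C_N\Bigl(\frac{r}{t}\wedge\frac{t}{r}\Bigr)^{\varepsilon}\frac{(r\vee t)^{-n}}{\bigl(1+|y-z|/(r\vee t)\bigr)^{N}}.$$
For $r\le t$ I would exploit the vanishing of $\Psi$: writing $\Psi(r\SL)=(r/t)^{2n+2}(t^2L)^{n+1}\Phi^3(r\SL)$ turns $R_t\Psi(r\SL)$ into $(r/t)^{2n+2}$ times the composition of $R_t(t^2L)^{n+1}=t^{2n+3}\nabla L^{n+1}e^{-t^2L}$ — which has a Gaussian kernel bound by Lemma~\ref{le3.2} (for $S_P$ its $e^{-t\SL}$-analogue has a rapidly decreasing polynomial bound obtained by subordination, as in Lemma~\ref{le3.1}) — with $\Phi^3(r\SL)$, whose kernel is, by Lemma~\ref{le2.1} (finite propagation speed), bounded by $Cr^{-n}$ and supported in $\{|\cdot|\lesssim r\le t\}$; composing these gives the claim with $\varepsilon=2n+2$. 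For $r\ge t$ the gain must instead be squeezed out of $R_t$, and this is exactly where $(G)$ enters: it provides the Gaussian bound $|K_{t\nabla e^{-t^2L/2}}(x,w)|\le Ct^{-n}\exp(-|x-w|^2/(ct^2))$ (and, via subordination, its $S_P$-analogue), and — writing $\Psi(r\SL)=g(r\SL)(1+r^2L)^{-M}$ with $g$ bounded and applying $(G)$ to $(1+r^2L)^{-M}$ — the Lipschitz bound $|\nabla_w K_{\Psi(r\SL)}(w,z)|\le Cr^{-n-1}(1+|w-z|/r)^{-N}$, while $K_{\Psi(r\SL)}(\cdot,z)$ is supported in $\{|\cdot-z|\lesssim r\}$ by Lemma~\ref{le2.1}. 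A $T1$-type cancellation — integrating $K_{t\nabla e^{-t^2L}}(x,w)$ against $K_{\Psi(r\SL)}(w,z)-K_{\Psi(r\SL)}(x,z)$, the constant piece contributing $t\nabla_x e^{-t^2L}\one(x)$, which vanishes since $L\one=0$ for the operators under consideration — then gives the estimate in this range, with $\varepsilon=1$ and $N$ as large as we please.

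With the kernel estimate in hand I would close the argument. For fixed $t$, by Cauchy--Schwarz with weight $\omega_t(r)=(\frac{r}{t}\wedge\frac{t}{r})^{-\varepsilon/2}$ (so that $\int_0^\infty\omega_t(r)^{-1}\,dr/r\le C/\varepsilon$) and the elementary bound $\int_{\RN}|K_{R_t\Psi(r\SL)}(y,z)|\,dz\le C(\frac{r}{t}\wedge\frac{t}{r})^{\varepsilon}$,
$$|R_tf(y)|^2\le\frac{C}{\varepsilon}\int_0^\infty\Bigl(\frac{r}{t}\wedge\frac{t}{r}\Bigr)^{\varepsilon/2}\int_{\RN}\bigl|K_{R_t\Psi(r\SL)}(y,z)\bigr|\,\bigl|\Psi(r\SL)f(z)\bigr|^2\,dz\,\frac{dr}{r}.$$
Integrating over the cone $|x-y|<t$ against $dy\,dt/t^{n+1}$ and using Fubini, the whole matter reduces to the purely geometric inequality
$$\int_0^\infty\int_{|x-y|<t}\Bigl(\frac{r}{t}\wedge\frac{t}{r}\Bigr)^{\varepsilon/2}\frac{(r\vee t)^{-n}}{\bigl(1+|y-z|/(r\vee t)\bigr)^{N}}\,\frac{dy\,dt}{t^{n+1}}\le C\,r^{-n}\Bigl(\frac{r}{r+|x-z|}\Bigr)^{n\mu},$$
valid once $N$ is large and $\varepsilon$ is adequate relative to $n(\mu-1)$ — both ensured by the choice $\Psi(\zeta)=\zeta^{2n+2}\Phi^3(\zeta)$ for the range of $\mu>1$ in play. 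Assembling these estimates gives $S_Pf(x)^2\le C\iint_{\RR_{+}^{n+1}}(\frac{r}{r+|x-z|})^{n\mu}|\Psi(r\SL)f(z)|^2\,dz\,dr/r^{n+1}=C\,\gL(f)(x)^2$, which is (\ref{e3.2}); the proof for $S_H$ is word-for-word the same, and Proposition~\ref{prop3.3} is proved identically but more easily, using only $(GE)$ and the time-derivative bounds (\ref{e3.0}) — there is no gradient, hence neither $(G)$ nor the $T1$ step is needed. The main obstacle is the kernel estimate in the range $r\ge t$ for the vertical operators: since $\nabla$ is not a function of $L$, one cannot transfer a power of $L$ onto it, so the decisive gain $(t/r)^{\varepsilon}$ must be extracted by the $(G)$-based cancellation argument above.
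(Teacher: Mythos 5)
Your overall blueprint—reproducing formula, almost-orthogonality estimates for $K_{R_t\Psi(r\SL)}$, Cauchy--Schwarz in $r$, and reduction to a geometric inequality in the cone—parallels the paper's proof, and your treatment of the near regime $r\le t$ (moving the factor $(r^2L)^{n+1}$ onto $R_t$ and invoking Lemma~\ref{le3.2} together with Lemma~\ref{le2.1}) is correct. The genuine gap is in the far regime $r\ge t$. The $T1$-type cancellation you propose has a ``constant piece'' equal to $K_{\Psi(r\SL)}(x,z)\cdot t\nabla_x\bigl(e^{-t^2L}\one\bigr)(x)$, and you claim this vanishes because $L\one=0$. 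But conservativity is not among the paper's hypotheses: the only assumptions are $(GE)$ and $(G)$. A Schr\"odinger operator $L=-\Delta+V$ with $0\le V$ smooth, bounded and nonzero satisfies both, yet $L\one=V\neq 0$ and $e^{-t^2L}\one$ is not constant. In that case $(G)$ gives only $\bigl|t\nabla_x(e^{-t^2L}\one)(x)\bigr|\lesssim 1$, so the constant piece contributes a term of size $r^{-n}(1+|x-z|/r)^{-N}$ with no extra factor $(t/r)^\varepsilon$; the Schur weight $\omega_t(r)=(\frac{r}{t}\wedge\frac{t}{r})^{-\varepsilon/2}$ is then useless (with $\varepsilon=0$, $\int_0^\infty\omega_t(r)^{-1}\,dr/r$ diverges), and the argument collapses. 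A secondary, unaddressed point is that the Lipschitz bound $|\nabla_wK_{\Psi(r\SL)}(w,z)|\lesssim r^{-n-1}(1+|w-z|/r)^{-N}$ is asserted rather than derived, and it does not follow immediately from $(G)$ since $\Psi(r\SL)$ carries no heat factor.

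The paper sidesteps both difficulties by choosing a different reproducing formula, $f=C_\Psi\int_0^\infty (r^2L)^\kappa e^{-r^2L}\Psi(r\SL)f\,\frac{dr}{r}$, composed with the subordination formula in the case of $S_P$. Composing $R_t=t\nabla e^{-t^2L}$ with this, the gradient always falls on a heat kernel at time $t^2+r^2$, and Lemma~\ref{le3.2} then yields a Gaussian at scale $\sqrt{t^2+r^2}$ with prefactor $t\,r^{2\kappa}(t^2+r^2)^{-1/2-\kappa}$, which is $\approx t/r$ when $r\ge t$ and $\approx (r/t)^{2\kappa}$ when $r\le t$: the far-field gain comes out of the scaling structure alone, with no cancellation argument and no kernel regularity for $\Psi(r\SL)$ required. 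If you insert the factor $(r^2L)^\kappa e^{-r^2L}$ into your reproducing formula, your argument becomes essentially the paper's and the $T1$ step—along with its hidden conservativity assumption—disappears.
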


\begin{proof} [Proofs of Propositions~\ref{prop3.3} and ~\ref{prop3.4}]\ 
Let us begin  to prove (\ref{e3.2}).  
By the spectral theory (\cite{Yo}),
for every $f\in {\mathcal S}(\RN)$ and every $\kappa \in{\mathbb N}$,

\noindent\begin{eqnarray*}
f =C_\Psi\int^\infty_0
(t^2L)^{\kappa}e^{-t^2{L}}\Psi(t\sqrt{L}) f  {dt\over t}
\end{eqnarray*}

\noindent
with $C^{-1}_{\Psi}=\int^\infty_0 t^{2\kappa}
 e^{-t^2} \Psi(t ){dt/t}$, and the integral converges in $L^2(\RN)$.
Recall the subordination formula:

$$
e^{-t\sqrt{L}} ={1\over \sqrt{\pi}}\int^\infty_0 e^{-u}  {u}^{-1/2} e^{-{t^2\over 4u}L} du.
$$

\noindent
One   writes

\begin{eqnarray} \label{e3.666}
\hspace{-1cm} s\nabla e^{-s\SL}f(y)
&=& {1\over \sqrt{\pi}}\int^\infty_0e^{-u}  {u}^{-1/2}s\nabla e^{-{s^2\over 4u}L}f(y)du \nonumber\\
&=&{ C_{\Psi}  \over \sqrt{\pi}}\int^{\infty}_{0}\int^\infty_0e^{-u}  {u}^{-1/2} st^{2\kappa}\nabla\big(L^\kappa
e^{-({s^2\over 4u}+t^2)L}\big)\Psi(t\sqrt{L})f (y){dt\, du\over t}.
\end{eqnarray}

\noindent
Fix $\kappa=[{n(\mu-1)\over 2}]+1$. 
Using   Lemma~\ref{le3.2}  and  the H\"older inequality, we  can estimate (\ref{e3.666}) as follows:

\begin{eqnarray*}
&&\hspace{-1cm}|s\nabla e^{-s\SL}f(y)|\\
&\leq&C \int^{\infty}_0\int^{\infty}_0\int_{\RN}e^{-u}u^{-1/2}st^{2\kappa}\Big({s^2\over 4u}+{t^2 }\Big)^{-1/2-\kappa-n/2}e^{-|y-z|^2/c({s^2\over 4u}+{t^2 })}
|\Psi(t\SL)f(z)|{dzdtdu\over t}\\
&\leq&C A\cdot B,
\end{eqnarray*}

\noindent
where
\begin{eqnarray*}
A^2 &= &   \int^{\infty}_0\int^{\infty}_0\int_{\RN}|\Psi(t\SL)f(z)|^2e^{-u}  {u}^{-1/2}s t^{2\kappa}
\Big({s^2\over 4u}+{t^2 }\Big)^{-1/2-\kappa-n/2}e^{-|y-z|^2/c({s^2\over 4u}+{t^2 })}
{dzdtdu\over t} \end{eqnarray*} 

\noindent
and
\begin{eqnarray*}
B^2
&=&  \int^{\infty}_0\int^{\infty}_0\int_{\RN}e^{-u}u^{-1/2}st^{2\kappa}\Big({s^2\over 4u}+{t^2 }\Big)^{-1/2-\kappa-n/2}e^{-|y-z|^2/c({s^2\over 4u}+{t^2 })}
{dzdtdu\over t}\\
&=&C \int^{\infty}_0\int^{\infty}_0\int_{0}^{\infty}e^{-u}u^{-1/2}st^{2\kappa}\Big({s^2\over 4u}+{t^2 }\Big)^{-1/2-\kappa}e^{-r^2 }
r^{n-1}{drdtdu\over t}\\
 &\leq& C \int^{\infty}_0\int^{\infty}_0 e^{-u} \, v^{2\kappa}\Big(1+{v^2 }\Big)^{-1/2-\kappa} { du dv\over v} \\
 &\leq& C.
\end{eqnarray*}

\noindent
Note that in the first  equality of the above term $B$, we have changed variables  $|y-z|\rightarrow r({s^2\over 4u}+{t^2 })^{1/2}$ and $t\rightarrow v ({s^2/u})^{1/2}$.
Hence,

\begin{eqnarray*}
&&\hspace{-0.3cm}|s\nabla e^{-s\SL}f(y)|^2\\
&&\leq C
\int^{\infty}_0\int^{\infty}_0\int_{\RN}|\Psi(t\SL)f(z)|^2e^{-u}u^{-1/2}st^{2\kappa}\Big({s^2\over 4u}
+{t^2 }\Big)^{-1/2-\kappa-n/2}e^{-|y-z|^2/c({s^2\over 4u}+{t^2 })}
{dzdtdu\over t}.
\end{eqnarray*}

\noindent Therefore, we put it  into the definition of $S_p$ to obtain 

\begin{eqnarray*}
&&\hspace{-0.6cm}S^2_P(f)(x) = \int^{\infty}_0\int_{|x-y|<s}|s\nabla e^{-s\SL}f(y)|^2{dyds\over s^{n+1}}\\
&&\leq C\iint_{\RR^{n+1}_{+}}|\Psi(t\SL)f(z)|^2\\
&&\hspace{0.2cm}\times\Bigg(\int^{\infty}_0\int^{\infty}_0
\int_{|x-y|<s}e^{-u}u^{-1/2}st^{2\kappa+n}\Big({s^2\over 4u}+{t^2 }\Big)^{-1/2-\kappa-n/2}e^{-|y-z|^2/c({s^2\over 4u}+{t^2 })}
{dydsdu\over s^{n+1}}\Bigg)
{dzdt\over t^{n+1}}.
\end{eqnarray*}

\noindent
We will be done if we show that

\begin{eqnarray}\label{e3.3}
&&
\hspace{-1.2cm}
\int^{\infty}_0\int^{\infty}_0\int_{|v-y|<s}e^{-u}u^{-1/2}st^{2\kappa+n}\Big({s^2\over 4u}+{t^2 }\Big)^{-1/2-\kappa-n/2}e^{-|y|^2/c({s^2\over 4u}+{t^2 })}
{dydsdu\over s^{n+1}}\\
&\leq& C\Big({t\over t+|v|}\Big)^{n\mu},\nonumber
\end{eqnarray}

\bigskip
\noindent
where we set    $x-z=v,$  and we will prove estimate (\ref{e3.3}) by considering the following two cases.

\medskip

\noindent
{\it Case 1.}  $|v|\leq t.$ \ In this case,  it is easy to show that

\begin{eqnarray*}
{\rm LHS\ of \ (\ref{e3.3})}  
&&\leq C \int^{\infty}_0\int^{\infty}_0 e^{-u}u^{-1/2}st^{2\kappa+n}\Big({s^2\over 4u}+{t^2 }\Big)^{-1/2-\kappa-n/2}
{ dsdu\over s }\\
&&\leq C \int^{\infty}_0\int^{\infty}_0 e^{-u}u^{-1/2}\sqrt{u}s\big({s^2}+1\big)^{-1/2-\kappa-n/2}
{ dsdu\over s }\\
&&\leq C.
\end{eqnarray*}

\noindent But $|v|\leq t$, so

$$\Big({t\over t+|v|}\Big)^{n\mu}\geq C_{n,\mu}.$$

\noindent This implies that  (\ref{e3.3}) holds  when $|v|\leq t.$

\medskip

\noindent
{\it Case 2.}   $|v|> t$. In this  case, we break the integral into two pieces:

\begin{eqnarray*}
\int^\infty_0\int^{|v|/2}_0\int_{|v-y|<s}\cdots +\int^\infty_0\int^{\infty}_{|v|/2}\int_{|v-y|<s}\cdots =:I+{\it II}.
\end{eqnarray*}

\noindent For the first term, note that $|y|\geq |v|-|v-y|>|v|/2.$ This yields

\begin{eqnarray*}
I&\leq&\int^{\infty}_0\int^{|v|/2}_0\int_{|v-y|<s}e^{-u}u^{-1/2}st^{2\kappa+n}
\Big({s^2\over 4u}+{t^2 }\Big)^{-1/2-\kappa-n/2}e^{-|v|^2/c({s^2\over 4u}+{t^2 })}
{dydsdu\over s^{n+1}}\\
&\leq&C\int^{\infty}_0\int^{\infty}_0e^{-u}u^{-1/2}st^{2\kappa+n}
\Big({s^2\over 4u}+{t^2}\Big)^{-1/2-\kappa-n/2} \Big({s^2\over 4u}+{t^2 }\Big)^{n\mu/2}/|v|^{n\mu}
{ dsdu\over s }\\
&\leq&C\Big({t\over |v|}\Big)^{n\mu}\int^{\infty}_0\int^{\infty}_0
e^{-u}u^{-1/2}st^{2\kappa+n-n\mu}\Big({s^2\over 4u}+{t^2 }\Big)^{-1/2-\kappa-n/2+n\mu/2}
{ dsdu\over s }\\
&\leq&C\Big({t\over |v|}\Big)^{n\mu}\int^{\infty}_0\int^{\infty}_0e^{-u} s \big({s^2+1}\big)^{-1/2-\kappa-n/2+n\mu/2}
{ dsdu\over s }\\
&\leq&C\Big({t\over |v|}\Big)^{n\mu},
\end{eqnarray*}

\noindent where we used condition $\kappa=[{n(\mu-1)\over2}]+1$ in the last inequality.
Since $|v|>t$,
so $I\leq  C_{n,\mu}\Big({t \over t+|v|}\Big)^{n\mu}.$

\noindent
For the term $\it{II}$, we have

\begin{eqnarray*}
\it{II}
&\leq&C\int^{\infty}_0\int_{|v|/2}^{\infty}\int_{0}^{\infty}e^{-u}
u^{-1/2}st^{2\kappa+n}\big({s^2\over 4u}+{t^2 }\big)^{-1/2-\kappa-n/2}e^{-r^2/ ({s^2\over 4u}+{t^2 })}r^{n-1}
{drdsdu\over s^{n+1}}\\
&\leq&C\int^{\infty}_0\int_{|v|/2}^{\infty} e^{-u}u^{-1/2}st^{2\kappa+n}\Big({s^2\over 4u}+{t^2 }\Big)^{-1/2-\kappa}
{ dsdu\over s^{n+1}}\\
&\leq&C\int^{\infty}_0\int_{|v|/(4\sqrt{u}t)}^{\infty} e^{-u}u^{-1/2}(\sqrt{u})^{1-n}
\Big({s^2+1}\Big)^{-1/2-\kappa}
{ dsdu\over s^{n}}\\
&\leq&C\int^{\infty}_0 e^{-u}u^{-1/2}(\sqrt{u})^{1-n}
\Big({\sqrt{u}\, t\over |v|}\Big)^{2\kappa+n}
du\\&\leq&C\Big({t\over |v|}\Big)^{2\kappa+n}\\
&\leq&
 C  \Big({t\over t+|v|}\Big)^{n\mu},
\end{eqnarray*}

\noindent since $\kappa=[{n(\mu-1)\over2}]+1$ and $|v|>t$.

From the above {\it Cases 1} and {\it  2}, we have obtained estimate (\ref{e3.3}), and then   the proof of
estimate (\ref{e3.2}) is complete. 
The similar argument as above gives estimate (\ref{e3.2}) since for the area integral $S_H$, and this completes 
the proof of Proposition~\ref{prop3.4}.

For the area functions $s_P$ and $s_h$,  we can use  a similar argument to show  Proposition~\ref{prop3.3} 
by using either estimate (\ref{e3.0}) or   Lemma~\ref{le3.1} instead of Lemma~\ref{le3.2} in the  proof
of estimate (\ref{e3.2}), and we skip it here.
\end{proof}

 \bigskip

 \subsection{Weighted $L^2$ estimate of $\gL$.}
\begin{theorem}\label{th3.5}
Let $\mu>1$. Then there exists a constant $C=C_{n,\mu,\Phi}$ such that
for all $w\geq 0$ in $L_{loc}^{1}(\RN)$ and all $f\in {\mathcal S}(\RN)$, we
have
\begin{eqnarray}\label{e3.6}
\int_{\RN}\gL (f)^2wdx\leq C\int_{\RN}|f|^2Mwdx.
\end{eqnarray}
\end{theorem}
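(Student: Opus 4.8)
The plan is to prove the weighted bound \eqref{e3.6} by a Fubini argument that converts the $g^{*}_{\mu,\Psi}$ expression into an integral of $\Psi(t\sqrt L)f$ tested against an averaged weight, and then to recognize that this averaged weight is pointwise dominated by $Mw$. Concretely, I would write
\[
\int_{\RN}\gL(f)^2 w\,dx=\iint_{\RR^{n+1}_+}|\Psi(t\sqrt L)f(y)|^2\Bigg(\int_{\RN}\Big(\tfrac{t}{t+|x-y|}\Big)^{n\mu}w(x)\,dx\Bigg)\frac{dy\,dt}{t^{n+1}},
\]
and the first key step is the elementary (but crucial) pointwise inequality
\[
\int_{\RN}\Big(\tfrac{t}{t+|x-y|}\Big)^{n\mu}w(x)\,dx\le C_{n,\mu}\,t^{n}\,Mw(y),
\]
valid for $\mu>1$, obtained by splitting $\RN$ into the ball $B(y,t)$ and the dyadic annuli $2^{k}t\le|x-y|<2^{k+1}t$ and summing the geometric series in $k$ (here $\mu>1$ is exactly what makes the series converge). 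This reduces the problem to showing
\[
\iint_{\RR^{n+1}_+}|\Psi(t\sqrt L)f(y)|^2 Mw(y)\,\frac{dy\,dt}{t}\le C\int_{\RN}|f|^2 Mw\,dx.
\]

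The second main step is therefore a weighted square-function estimate for the single-parameter family $\Psi(t\sqrt L)$ with weight $v:=Mw$. Since $\Psi(s)=s^{2n+2}\Phi^{3}(s)\in{\mathbb F}(n+1)$ in the notation of the excerpt, the unweighted version of this is exactly \eqref{e2.15}; what is needed is the version with an $A_1$-type weight. I would deduce it from the fact that $Mw\in A_1$ with $\|Mw\|_{A_1}$ bounded by an absolute constant (Coifman--Rochberg), together with a weighted Littlewood--Paley estimate for $\Psi(t\sqrt L)$. The latter follows from the kernel bounds already available: by Lemma~\ref{le2.2} (with $r\to\infty$, i.e. the term $1-\Phi(r\sqrt L)$ replaced by $1$, or directly by the same computation) the kernel of $\Psi(t\sqrt L)$ satisfies the Poisson-type bound $|K_{\Psi(t\sqrt L)}(x,y)|\le C\,t^{-n}(1+|x-y|/t)^{-(n+1)}$, so the vertical square function $\big(\int_0^\infty|\Psi(t\sqrt L)f|^2\,dt/t\big)^{1/2}$ is dominated pointwise by $\gL(f)$ and is of Calder\'on--Zygmund type away from the diagonal; alternatively one runs the standard argument: the operator $f\mapsto\big(\int_0^\infty|\Psi(t\sqrt L)f|^2\,dt/t\big)^{1/2}$ is bounded on $L^2$ by \eqref{e2.15}, and its "vector-valued kernel" $t\mapsto K_{\Psi(t\sqrt L)}(x,y)$ satisfies an $L^2(dt/t)$-Hörmander condition in the $y$ variable thanks to the time-regularity encoded in the Gaussian bounds \eqref{e3.0} (one writes $\Psi(t\sqrt L)=(t^2L)^{n+1}e^{-t^2L/2}\cdot\big(\Phi^3/(\,\cdot\,)^{?}\big)$-type factorizations and uses $\partial_t$ bounds). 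This gives weak $(1,1)$ with respect to Lebesgue measure, hence by the standard good-$\lambda$/Fefferman--Stein machinery a vector-valued weighted estimate $\int (\int_0^\infty|\Psi(t\sqrt L)f|^2\,dt/t)\,v\,dx\le C\,\|v\|_{A_1}\int|f|^2 v\,dx$ for every $v\in A_1$, which applied with $v=Mw$ and $\|Mw\|_{A_1}\lesssim 1$ completes the argument.

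I expect the main obstacle to be exactly this weighted vector-valued Littlewood--Paley bound for $\Psi(t\sqrt L)$: the kernel of $\Psi(t\sqrt L)$ has no smoothness in the space variables (only the time-derivative/Gaussian bounds \eqref{e3.0} and the finite-propagation-speed consequences of Section~2 are available), so one cannot simply quote classical Calder\'on--Zygmund theory. The way around it — and presumably what the authors do — is to establish an $L^2(dt/t)$-valued Hörmander regularity estimate for $y\mapsto K_{\Psi(t\sqrt L)}(x,y)$ using the compact-support property \eqref{e2.5} of $\cos(s\sqrt L)$ together with Sobolev norms of $G_r$-type functions as in the proof of Lemma~\ref{le2.2}; once that regularity is in hand the weighted bound follows from the standard $A_1$ extrapolation for vector-valued singular integrals. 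The remaining steps — the Fubini interchange, the $(t/(t+|x-y|))^{n\mu}$ vs.\ $Mw$ estimate, and the passage $v=Mw\in A_1$ with a dimensional constant — are routine.
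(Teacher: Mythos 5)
Your Fubini step and the elementary estimate $\int_{\RN}\big(t/(t+|x-y|)\big)^{n\mu}w(x)\,dx\leq C_{n,\mu}t^n Mw(y)$ are both correct, but everything after that diverges from the paper and, more seriously, contains a genuine gap.

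First, the factual error: the claim that ``$Mw\in A_1$ with $\|Mw\|_{A_1}$ bounded by an absolute constant (Coifman--Rochberg)'' is false. Coifman--Rochberg gives $(Mw)^\delta\in A_1$ with a dimensional constant only for $0<\delta<1$; for $\delta=1$ the statement fails (e.g.\ $w=\chi_{[0,1]}$ on $\RR$ gives $Mw(x)\sim 1/|x|$ for $|x|>1$, which is not $A_1$ and not even locally integrable at scale $|x|\to\infty$). So the intended application of ``$A_1$ extrapolation with $v=Mw$'' is not available.

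Second, and independently, the route you identify as the ``main obstacle'' — a weighted vector-valued Littlewood--Paley bound for $\Psi(t\sqrt L)$ via an $L^2(dt/t)$-Hörmander condition in $y$ — is not established anywhere under the hypotheses of the paper (only Gaussian bounds and their time-derivatives are assumed; no spatial regularity), and in fact the authors do not take this route at all. Note also that replacing the inner integral by $Mw(y)$ \emph{before} localizing loses information: for $|y-z|<t$ one has the crucial comparison
\[
\frac{1}{t^n}\int_{\RN}w(x)\Big(\frac{t}{t+|x-y|}\Big)^{n\mu}dx\;\approx\;\frac{1}{t^n}\int_{\RN}w(x)\Big(\frac{t}{t+|x-z|}\Big)^{n\mu}dx,
\]
but $Mw(y)\sim 2^k$ does \emph{not} imply $Mw(z)\gtrsim 2^k$ for $|y-z|<t$ (the maximizing ball for $Mw(y)$ may be much smaller than $t$).

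What the paper actually does is precisely the Chang--Wilson--Wolff level-set trick, made available by the finite propagation speed. Keep the inner quantity $\frac{1}{t^n}\int w(x)(t/(t+|x-y|))^{n\mu}dx$ intact, decompose $(y,t)$ into level sets $A_k$ where this is $\sim 2^k$, and use the comparison above to deduce that if $(y,t)\in A_k$ and $|y-z|<t$ then $z\in E_k:=\{Mw\geq c2^k\}$. Because $K_{\Psi(t\sqrt L)}$ is supported in $\{|y-z|\leq t\}$ (Lemma~\ref{le2.1}, i.e.\ finite propagation speed), one has $\Psi(t\sqrt L)f(y)=\Psi(t\sqrt L)(f\chi_{E_k})(y)$ for $(y,t)\in A_k$. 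Then the \emph{unweighted} spectral estimate \eqref{e2.15} gives $\int_{\RR^{n+1}_+}|\Psi(t\sqrt L)(f\chi_{E_k})|^2\frac{dydt}{t}=C_\Psi\|f\chi_{E_k}\|_{L^2}^2$, and summing $\sum_k 2^k\chi_{E_k}\leq CMw$ finishes the proof. No weighted singular-integral theory, no Hörmander condition, and no Coifman--Rochberg are needed; the compact support of the kernel is exactly what lets one bypass the absence of spatial regularity, which is the whole point of using $\Phi=\widehat\varphi$ with $\varphi$ compactly supported.
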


\medskip

\begin{proof} The proof essentially follows from  \cite{CWW} and \cite{CW} for the classical area function.
Note that by Lemma~\ref{le2.1}, the kernel $K_{\Psi(t\sqrt{L})}$ of the operator
$\Psi(t\sqrt{L})$ satisfies supp $K_{\Psi(t\sqrt{L})}\subseteq \big\{ (x,y)\in \RN\times \RN: |x-y|\leq t\big\}.$
By (\ref{e3.1}), one writes

\begin{eqnarray}\label{e3.7}\hspace{1cm}
\int_{\RN}\gL (f)^2wdx
&=& \int_{\RN} \int_{\RR_{+}^{n+1}}
\bigg({t\over t+|x-y|}\bigg)^{n\mu}
|\Psi(t\sqrt{L})f(y)|^2{dydt\over t^{n+1}} dx\nonumber\\
&=&
\int_{\RR^{n+1}_{+}}|\Psi(t\sqrt{L})f(y)|^2\bigg({1\over t^n}\int_{\RN}w(x)
\Big({t\over t+|x-y|}\Big)^{n\mu}dx\bigg){dydt\over t}.
\end{eqnarray}

\noindent For $k$ an integer, set

$$A_k=\Big\{(y,t):\ 2^{k-1}<{1\over t^n}\int_{\RN}w(x)
\Big({t\over t+|x-y|}\Big)^{n\mu}dx\leq 2^k\Big\}.$$

\noindent Then

\begin{eqnarray}\label{e3.8}
{\rm RHS \ of \ (\ref{e3.7})} \ \leq \sum_{k\in \mathbb{Z}}2^k
\int_{\RR^{n+1}_{+}}|\Psi(t\sqrt{L})f(y)|^2\chi_{A_k}(y,t){dydt\over t}.
\end{eqnarray}

\noindent We note that if $(y,t)\in A_k$, then since $\mu>1$,

$$2^{k-1}\leq {1\over t^n}\int_{\RN}w(x)
\Big({t\over t+|x-y|}\Big)^{n\mu}dx\leq CMw(y).$$

\noindent Now if $|y-z|<t$, then $t+|x-y|\approx t+|x-z|$. Thus if
$|y-z|<t$ and $(y,t)\in A_k,$

$$2^{k-1}\leq {C\over t^n}\int_{\RN}w(x)
\Big({t\over t+|x-z|}\Big)^{n\mu}dx\leq CMw(z).$$

\noindent In particular, if $(y,t)\in A_k$ and $|y-z|<t$, then
$z\in E_k=\{z:\ Mw(z)\geq C2^k\}$. Now since ${\rm supp}\ \! K_{ \Psi(t\sqrt{L})}(y,z)
\subseteq \big\{(y,z)\in \RN\times \RN: |y-z|\leq t\big\}$, for
$(y,t)\in A_k$,

$$\Psi(t\sqrt{L})f(y)=\int_{|y-z|<t}K_{ \Psi(t\sqrt{L})}(y,z)f(z)dz
=\int_{\RN}K_{ \Psi(t\sqrt{L})}(y,z)f(z)\chi_{E_k}(z)dz.$$

\noindent Therefore,

\begin{eqnarray*}
{\rm RHS \ of \ (\ref{e3.7})} \ &\leq& \sum_{k\in \mathbb{Z}}2^k
\int_{\RR^{n+1}_{+}}|\Psi(t\sqrt{L})(f\chi_{E_k})(y)|^2\chi_{A_k}(y,t){dydt\over t}\\
&\leq& \sum_{k\in \mathbb{Z}}2^k
\int_{\RR^{n+1}_{+}}|\Psi(t\sqrt{L})(f\chi_{E_k})(y)|^2{dydt\over t}\\
&=& \sum_{k\in \mathbb{Z}}2^k
\int_0^{\infty}\|\Psi(t\sqrt{L})(f\chi_{E_k})\|_{L^2(\RN)}^2{ dt\over t}\\
&=& C_{\Psi} \sum_{k\in \mathbb{Z}}2^k \| f\chi_{E_k} \|_{L^2(\RN)}^{2}
\end{eqnarray*}

\noindent with $C_{\Psi}=\int^\infty_0|\Psi(t)|^2{dt/t}<\infty,$ and the last inequality follows from the
spectral theory (see \cite{Yo}). By interchanging the
order of summation and integration, we have

\begin{eqnarray*}
\int_{\RN}\gL (f)^2wdx&\leq& C \sum_{k\in \mathbb{Z}}2^k\int_{\RN}|f|^2\chi_{E_k}dx\\
&\leq&  C\int_{\RN}|f|^2\Big(\sum_{k\in \mathbb{Z}}2^k\chi_{E_k}\Big)dx\\
&\leq& C \int_{\RN}|f|^2Mwdx.
\end{eqnarray*}

\noindent This concludes the proof of the theorem.
\end{proof}

\medskip
As a consequence of Propositions~\ref{prop3.3}, ~\ref{prop3.4} and Theorem~\ref{th3.5}, we have the following 
analogy for the area function of the result of Chang, Wilson and Wolff.

\medskip

\begin{cor}\label{c3.3} Let $T$ be  of the area integrals $s_h$, $s_p$, $S_{P}$ and $S_{H}.$
 Under assumptions of  Theorems~\ref{th1.1} and ~\ref{th1.2},  there exists a constant $C$ such that
 for  all $w\geq 0$ in $L_{loc}^{1}(\RN)$ and all $f\in {\mathcal S}(\RN)$,

\begin{eqnarray*}
\int_{\RN}|Tf|^2wdx\leq C\int_{\RN}|f|^2Mwdx.
\end{eqnarray*}
\end{cor}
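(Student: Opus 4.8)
The plan is to derive Corollary~\ref{c3.3} by simply assembling the two ingredients already established: the pointwise domination of each area integral by the auxiliary function $\gL$, and the weighted $L^2$ bound for $\gL$ itself. Concretely, let $T$ be any one of $s_h$, $s_p$, $S_P$, $S_H$. By Proposition~\ref{prop3.3} (for $T=s_h,s_p$) or Proposition~\ref{prop3.4} (for $T=S_P,S_H$), there is a constant $C=C_{n,\mu,\Psi}$ such that $Tf(x)\leq C\,\gL(f)(x)$ for every $x\in\RN$ and every $f\in{\mathcal S}(\RN)$, where $\gL$ is the function defined in (\ref{e3.1}) built from the fixed $\Psi(s)=s^{2n+2}\Phi^3(s)$ with $\mu>1$ fixed (say $\mu=2$).

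Given that pointwise bound, the argument is immediate: square both sides, multiply by $w\geq 0$, and integrate over $\RN$ to get
\begin{eqnarray*}
\int_{\RN}|Tf|^2 w\,dx \leq C^2\int_{\RN}\gL(f)^2\,w\,dx.
\end{eqnarray*}
Now invoke Theorem~\ref{th3.5}, which applies since $w\geq 0$, $w\in L^1_{\rm loc}(\RN)$ and $f\in{\mathcal S}(\RN)$, and which gives
\begin{eqnarray*}
\int_{\RN}\gL(f)^2\,w\,dx \leq C_{n,\mu,\Phi}\int_{\RN}|f|^2\,Mw\,dx.
\end{eqnarray*}
Combining the two displays yields $\int_{\RN}|Tf|^2 w\,dx\leq C\int_{\RN}|f|^2\,Mw\,dx$ with $C$ depending only on $n$, $\mu$, $\Phi$ (equivalently, on $n$ alone once $\mu$ and $\Phi$ are fixed as in Section~3), which is exactly the claimed inequality.

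There is essentially no obstacle here, since the corollary is a formal consequence of the two prior results; the only point requiring a word is that the hypotheses of Theorems~\ref{th1.1} and~\ref{th1.2} are precisely what guarantee the validity of Propositions~\ref{prop3.3} and~\ref{prop3.4}: for $T=s_h$ or $s_p$ only $(GE)$ is needed (Proposition~\ref{prop3.3}), whereas for $T=S_P$ or $S_H$ one additionally uses $(G)$ (Proposition~\ref{prop3.4}). Thus the statement "under assumptions of Theorems~\ref{th1.1} and~\ref{th1.2}" covers all four operators uniformly. One should also note for completeness that $\gL(f)$ is finite for $f\in{\mathcal S}(\RN)$ — this follows from Theorem~\ref{th3.5} applied with $w\equiv 1$ (so $Mw\equiv 1$), giving $\gL(f)\in L^2(\RN)$ — so the manipulations above are not vacuous.
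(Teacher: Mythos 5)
Your proof is correct and coincides with the paper's: the corollary is stated there as an immediate consequence of Propositions~\ref{prop3.3}, \ref{prop3.4} and Theorem~\ref{th3.5}, which is precisely the combination you carry out. The only remark is that the paper does not even write out the two-line argument you give, treating it as self-evident.
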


\bigskip

\subsection{Proof of Theorem 1.3}
 Let $T$ be  of the area functions $s_h$, $s_p$, $S_{P}$ and $S_{H}.$
For $w\in A_1$, we have $Mw(x)\leq \|w\|_{A_1}w(x)$ for a.e. $x\in \RN$.
According to  Corollary \ref{c3.3},

\begin{eqnarray*}
\int_{\RN}T(f)^2wdx\leq C\int_{\RN}|f|^2Mwdx\leq C\|w\|_{A_1}\int_{\RN}|f|^2wdx.
\end{eqnarray*}

\noindent This implies (\ref{e1.8}) holds.

For (\ref{e1.9}), we follow the
method of Cordoba and Rubio de Francia (see pages 356-357, \cite{FP}). Let $p>2$ and take $f\in L^p(\RN).$ Then from duality, we know  that
there exist some $\varphi\in L^{(p/2)'}(\RN),$ with $\varphi\geq0,$ $\|\varphi\|_{L^{(p/2)'}(\RN)}=1$, such that

$$\|Tf\|^2_{L^p(\RN)}\leq \int_{\RN}|Tf|^2\varphi dx.$$

\noindent Set

$$v=\varphi +{M\varphi\over 2\|M\|_{L^{(p/2)'}(\RN)}}+{M^2\varphi\over (2\|M\|_{L^{(p/2)'}(\RN)})^2}+\cdots$$

\noindent following Rubio de Francia's familiar method  (Here $\|M\|_{L^{(p/2)'}(\RN)}$
denotes the
operator norm of the Hardy-Littlewood maximal operator on $L^{(p/2)'}(\RN)$). Then
$\|v\|_{L^{(p/2)'}(\RN)}\leq 2$ and $\|v\|_{A_1}\leq 2\|M\|_{L^{(p/2)'}(\RN)}\equiv O(p)$ as $p\to \infty.$
Therefore

\begin{eqnarray*}
\|Tf\|^2_{L^p(\RN)}&\leq& \int_{\RN}|Tf|^2\varphi dx\\
&\leq&\int_{\RN}|Tf|^2v dx\\
&\leq&C\|v\|_{A_1}\int_{\RN}|f|^2v dx\\
&\leq&Cp\|f\|^2_{L^p(\RN)}.
\end{eqnarray*}

\noindent This proves (\ref{e1.9}), and then  the proof of this theorem is complete.
\hfill{}$\Box$

\medskip

Note that in Theorem~\ref{e1.3},  when $L=-\Delta$ is the Laplacian on $\RN$, it is well known 
that  estimate (\ref{e1.9})  
of the classical area integral on $L^p(\RN)$ is sharp, in general (see, e.g., \cite{FP}).

\bigskip

\section{Proofs of  Theorems 1.1 and 1.2}
 \setcounter{equation}{0}

\medskip

Note that from Propositions~\ref{prop3.3} and ~\ref{prop3.4}, the area functions $S_H, S_P, s_H$ and $s_p$ are all controlled by the $\gL$ function.
In order to prove  Theorems 1.1 and 1.2, it suffices to show the following result.

 \medskip

\begin{theorem}\label{th4.1} \    Let $L$ be a non-negative self-adjoint operator such that the corresponding
 heat kernels satisfy Gaussian bounds $(GE)$. Let $\mu>3$.
If $w\geq 0$, $w\in L_{\rm loc}^{1}(\RN)$  and  $f\in {\mathcal S}(\RN)$, then

\begin{eqnarray*}
\hspace{-2.5cm}&{\rm (a)}& \hspace{0.1cm}\int_{\{\gL (f)>\lambda\}} wdx\leq {c(n)\over \lambda}\int_{\RN}|f| Mwdx,\ \ \ \lambda>0,\\
\hspace{-2.5cm}
&{\rm (b)}& \hspace{0.1cm}
\int_{\RN}\gL (f)^pw\, dx\leq c(n,p)\int_{\RN}|f|^pMw\, dx,\ \ \ 1<p\leq 2,
\\
\hspace{-2.5cm}&{\rm (c)}& \hspace{0.1cm} \int_{\RN} \gL (f)^pwdx\leq c(n,p)\int_{\RN}|f|^p(Mw)^{p/2}w^{-(p/2-1)}dx, \ \ \ 2<p<\infty.
\end{eqnarray*}
 \end{theorem}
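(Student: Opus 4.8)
The plan is to follow the Chang--Wilson--Wolff / Chanillo--Wheeden template, working directly with the square function $\gL$ whose building block $\Psi(t\sqrt L)$ has a kernel that is compactly supported ($|x-y|\le t$) and $O(t^{-n})$ by Lemma~\ref{le2.1}. The heart of the matter is the weak-type estimate (a); parts (b) and (c) will then be harvested by a Calder\'on--Zygmund-type interpolation/good-$\lambda$ argument exactly as in the classical case, using that $\gL$ is bounded on $L^2(w)$ by $C\int|f|^2Mw$ (Theorem~\ref{th3.5}), which serves as the ``$L^2$ endpoint''.

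\smallskip
\textbf{Weak-type (1,1), part (a).} Fix $\lambda>0$ and $f\in\mathcal S(\RN)$. Apply the Calder\'on--Zygmund decomposition to $|f|$ at height $\lambda$ relative to the measure $Mw\,dx$ (or relative to Lebesgue measure, adjusting constants), obtaining $f=g+b$ with $b=\sum_j b_j$, each $b_j$ supported in a Whitney cube $Q_j$ with $\int b_j=0$, $\|g\|_\infty\le C\lambda$, $\sum_j|Q_j|\le C\lambda^{-1}\|f\|_1$, and $\int_{Q_j}|b_j|\le C\lambda|Q_j|$. For the good part one uses $\gL(g)^2\le \frac{1}{\lambda}\,\gL$ bounded on $L^2(w)$: by Theorem~\ref{th3.5},
\begin{eqnarray*}
w\{\gL(g)>\lambda\}\le \frac{1}{\lambda^2}\int\gL(g)^2 w\,dx\le \frac{C}{\lambda^2}\int|g|^2 Mw\,dx\le \frac{C}{\lambda}\int|f|Mw\,dx.
\end{eqnarray*}
For the bad part, let $Q_j^*=100\sqrt n\,Q_j$ and $\Omega^*=\bigcup_j Q_j^*$; then $w(\Omega^*)\le C\lambda^{-1}\int|f|Mw$ after a standard maximal-function argument comparing $w(Q_j^*)$ with $\inf_{Q_j}Mw$. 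It remains to control $w\{x\notin\Omega^*:\ \gL(b)(x)>\lambda\}$ by Tchebychev, i.e.\ to bound $\int_{(\Omega^*)^c}\gL(b)(x)\,w(x)\,dx$ by $C\int|f|Mw$.

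\smallskip
\textbf{The main obstacle: the off-diagonal estimate for $\gL(b_j)$.} Using the cancellation $\int b_j=0$, one writes $\Psi(t\sqrt L)b_j(y)=\int (K_{\Psi(t\sqrt L)}(y,z)-K_{\Psi(t\sqrt L)}(y,c_j))b_j(z)\,dz$ where $c_j$ is the center of $Q_j$. Here one cannot use smoothness of the kernel in $z$; instead, following the method of \cite{CD,DM}, one exploits finite propagation speed and the factorization $\Psi(s)=\Psi_1(s)\Phi^2(s)$ as in Lemma~\ref{le2.2}, together with the $(I+t^2L)^{-m}$ resolvent trick, to obtain an $L^1\to L^1$-type decay estimate of the form
\begin{eqnarray*}
\int_{|x-y|\sim 2^k t,\ x\notin Q_j^*}\Big(\frac{t}{t+|x-y|}\Big)^{n\mu/2}\Big(\int_{\RN}|K_{\Psi(t\sqrt L)}(y,z)-K_{\Psi(t\sqrt L)}(y,c_j)|\,dz\Big)\,dx \le C\,\frac{\ell(Q_j)}{t}\wedge 1,
\end{eqnarray*}
which after integrating in $t$ and summing the geometric series in $k$ (this is where $\mu>3$ is used, to make the tail $\sum_k 2^{k(n-n\mu/2+\cdots)}$ converge and to absorb the factor $\big(\frac{t}{t+|x-c_j|}\big)^{-n\mu}$ against $Mw$) yields $\int_{(\Omega^*)^c}\gL(b_j)w\le C\int_{Q_j}|b_j|\,Mw\le C\lambda\int_{Q_j}Mw$. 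Summing over $j$ and combining with the good part gives (a). I expect the delicate bookkeeping in this off-diagonal bound --- keeping track of the $(t/(t+|x-y|))^{n\mu}$ weight through the kernel-difference estimate and the $t$-integration --- to be the main technical burden; everything hinges on having enough powers of $\mu$.

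\smallskip
\textbf{Parts (b) and (c).} Part (b) for $1<p\le 2$ follows by Marcinkiewicz interpolation between the weak-$(1,1)$ bound (a) (with underlying measure $Mw\,dx$, using that $Mw$ doubling is not needed since one interpolates the sublinear operator $f\mapsto\gL(f)$ against the pair of measures) and the $L^2(w)$--$L^2(Mw)$ bound of Theorem~\ref{th3.5}; concretely one applies the off-diagonal/distributional version of interpolation with change of measure as in \cite{CW}. For part (c), $2<p<\infty$, one dualizes: writing $q=(p/2)'$ and using $\int\gL(f)^p w=\|\gL(f)^2 w^{2/p}\|_{L^{p/2}}^{p/2}$ together with $L^{p/2}$--$L^{(p/2)'}$ duality, one is led to estimate $\int\gL(f)^2 w^{2/p}h$ for $h\ge0$, $\|h\|_{(p/2)'}=1$; applying Theorem~\ref{th3.5} with the weight $w^{2/p}h$ in place of $w$ gives $\le C\int|f|^2 M(w^{2/p}h)$, and then H\"older plus the $L^{(p/2)'}$-boundedness of $M$ and the elementary pointwise/integral inequality $\int|f|^2 M(w^{2/p}h)\le \big(\int|f|^p (Mw)^{p/2}w^{-(p/2-1)}\big)^{2/p}\|h\|_{(p/2)'}$ (this last step is exactly the computation carried out in \cite{CW} for the classical area function) produces (c). No new ideas beyond (a) are needed here; the arguments are the standard ones once the weak-type estimate and the $L^2$ inequality are in hand.
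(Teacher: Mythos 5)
Parts (b) and (c) of your proposal are sound and coincide with what the paper does: for $2<p<\infty$ one dualizes at the $L^{p/2}$ level, applies the $L^2$ estimate with the weight $hw$, and uses the $L^{(p/2)'}$-boundedness of the (centred, with respect to $w$) maximal function plus H\"older; for $1<p\le 2$ one interpolates the weak-$(1,1)$ endpoint against the $L^2(Mw\,dx)\to L^2(w\,dx)$ bound. Your treatment of the good function $g$ in part (a) is also fine in substance.

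However, your treatment of the bad part has a genuine gap. You write $\Psi(t\sqrt L)b_j(y)=\int\big(K_{\Psi(t\sqrt L)}(y,z)-K_{\Psi(t\sqrt L)}(y,c_j)\big)b_j(z)\,dz$ using $\int b_j=0$, and then propose to prove an $L^1$-type bound for $\int|K_{\Psi(t\sqrt L)}(y,z)-K_{\Psi(t\sqrt L)}(y,c_j)|\,dz$. But under $(GE)$ alone the heat kernel (hence $K_{\Psi(t\sqrt L)}$) has \emph{no} assumed regularity in the space variable, so there is no way to control this kernel difference; this is precisely the obstruction the whole setup is designed to avoid, and you acknowledge it but then propose an estimate that still requires it. Invoking Lemma~\ref{le2.2} does not help, because Lemma~\ref{le2.2} bounds the kernel of the composite operator $\Psi(t\sqrt L)\big(1-\Phi(r\sqrt L)\big)$ and says nothing about $z\mapsto K_{\Psi(t\sqrt L)}(y,z)$. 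The Coulhon--Duong / Duong--McIntosh device you gesture at is in fact a \emph{different decomposition of $b_j$}, not a sharper kernel-difference estimate: one writes $b_j=\Phi_j(\sqrt L)b_j+\big(1-\Phi_j(\sqrt L)\big)b_j$ with $\Phi_j(\sqrt L)=\Phi\big(\tfrac{\ell(Q_j)}{32}\sqrt L\big)$. The regularized piece $\Phi_j(\sqrt L)b_j$ is supported in $\tfrac{17}{16}Q_j$ with $\|\Phi_j(\sqrt L)b_j\|_\infty\lesssim\lambda$, so it is absorbed into the $L^2$ estimate of Theorem~\ref{th3.5}; the residual piece $\big(1-\Phi_j(\sqrt L)\big)b_j$ is then controlled \emph{without any cancellation of $b_j$}, because the factor $1-\Phi_j(\sqrt L)$ itself supplies the decay: by Lemma~\ref{le2.1} one has $|\Psi(t\sqrt L)(1-\Phi_j(\sqrt L))b_j|\lesssim\|b_j\|_1 t^{-n}$ for $t\le\ell(Q_j)/4$, and by Lemma~\ref{le2.2} one gets $|\Psi(t\sqrt L)(1-\Phi_j(\sqrt L))b_j|\lesssim\|b_j\|_1\,\ell(Q_j)\,t^{-n-1}$ for $t>\ell(Q_j)/4$. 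Splitting the $t$-integral at $\ell(Q_j)/4$ and $|x-x_j|/4$ and using $\mu>3$ (with a separate case when $n=1$) then yields $\gL\big((1-\Phi_j(\sqrt L))b_j\big)(x)\lesssim|f\chi_{Q_j}|\ast P_{\ell(Q_j)}(x)$ off $\Omega$. Your proposal conflates these two routes; replace the moment-cancellation argument by the $\Phi_j(\sqrt L)$ splitting and the rest of your plan goes through.
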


 \medskip

\subsection{Weak-type  $(1,1)$ estimate}\
We first state a Whitney decomposition. For its proof, we refer to Chapter 6, \cite{St}.

\medskip

\begin{lemma} \label{le4.2}
Let $F$ be a non-empty closed set in $\RN$. Then its complement
$\Omega$ is the union of a sequence of cubes $Q_k$, whose sides are parallel to the axes,
whose interiors are mutually disjoint, and whose diameters are
approximately proportional to their distances from $F$. More explicitly:

\medskip
\noindent
(i) $\Omega=\RN\setminus F=\bigcup\limits_{k=1}^{\infty}Q_k.$

\medskip
\noindent
(ii) $Q_j\bigcap Q_k=\varnothing$ if $j\neq k$.

\medskip
\noindent
(iii)   There exist two constants $c_1, c_2 > 0$, (we can take $c_1 = 1$, and
$c_2 = 4$), so that

$$c_1 {\rm diam}(Q_k)\leq  {\rm dist}(Q_k,\  F)\leq c_2 {\rm diam}(Q_k).$$
\end{lemma}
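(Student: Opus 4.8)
\textbf{Plan for the proof of the Whitney decomposition (Lemma~\ref{le4.2}).}
The plan is to give the standard dyadic construction. First I would introduce the family of dyadic cubes: for each $k\in\mathbb Z$, let $\mathcal D_k$ be the collection of closed cubes of side length $2^{-k}$ with vertices on the lattice $2^{-k}\mathbb Z^n$, so that two dyadic cubes are either nested or have disjoint interiors. Since $F$ is closed and nonempty, for each $x\in\Omega=\RN\setminus F$ the distance $d(x)={\rm dist}(x,F)$ is positive. The idea is to select, for each $x\in\Omega$, a dyadic cube containing $x$ whose diameter is comparable to $d(x)$; concretely, I would set up a threshold rule: declare a dyadic cube $Q$ to be \emph{admissible} if
\begin{equation*}
{\rm diam}(Q)\le {\rm dist}(Q,F)\le 4\,{\rm diam}(Q),
\end{equation*}
and then show every point of $\Omega$ lies in at least one admissible cube. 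For this, given $x\in\Omega$, choose the integer $k$ so that $\sqrt{n}\,2^{-k}\le d(x)< \sqrt{n}\,2^{-k+1}$ (roughly), and take $Q\in\mathcal D_k$ with $x\in Q$; a short estimate using ${\rm diam}(Q)=\sqrt n\,2^{-k}$ and the triangle inequality ${\rm dist}(Q,F)\ge d(x)-{\rm diam}(Q)$ and ${\rm dist}(Q,F)\le d(x)$ gives the two-sided bound (iii) with $c_1=1$, $c_2=4$ after adjusting the constant in the choice of $k$.

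Next I would pass from the cover by admissible cubes to a partition with disjoint interiors. The key observation is the \emph{maximality} trick: among all admissible dyadic cubes containing a given point, the nesting structure of dyadic cubes implies there is a unique maximal one (the chain of dyadic cubes containing $x$ is totally ordered by inclusion, and admissibility forces an upper bound on the side length, since a very large cube has ${\rm dist}(Q,F)$ much smaller than ${\rm diam}(Q)$ once ${\rm diam}(Q)>d(x)$; actually one checks admissible cubes have bounded size in terms of $d(x)$). So I would let $\{Q_k\}$ be the collection of all maximal admissible dyadic cubes. Two distinct maximal cubes cannot have one contained in the other (by maximality), and distinct dyadic cubes that are not nested have disjoint interiors; hence (ii) holds. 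Every $x\in\Omega$ is in some admissible cube, hence in the maximal admissible cube above it, so $\Omega=\bigcup_k Q_k$, which is (i). Finally each $Q_k$ is admissible, so (iii) holds by construction; re-index to a sequence (the family is countable since dyadic cubes are, and it is locally finite).

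The main obstacle — really the only point requiring care — is verifying that the maximal admissible cube exists and that the constants close up: one must check that the lower bound ${\rm diam}(Q)\le{\rm dist}(Q,F)$ is \emph{not} lost when passing to a larger dyadic cube (it can only improve, since enlarging $Q$ increases ${\rm diam}(Q)$ but decreases ${\rm dist}(Q,F)$, so in fact one has to argue in the other direction: enlarging keeps ${\rm dist}(Q,F)\ge {\rm diam}(Q)$ false eventually), and that the upper bound ${\rm dist}(Q,F)\le 4\,{\rm diam}(Q)$ is what actually caps the size. A clean way is: start from \emph{any} admissible cube $Q$ containing $x$ and repeatedly double; the chain of ancestors $Q\subset Q^{(1)}\subset Q^{(2)}\subset\cdots$ has ${\rm dist}(Q^{(j)},F)\le {\rm dist}(Q,F)+{\rm diam}(Q^{(j)})\le 5\,{\rm diam}(Q^{(j)})$ automatically once it was once $\le 4{\rm diam}(Q)$ roughly, so the binding constraint is the lower one, ${\rm diam}(Q^{(j)})\le {\rm dist}(Q^{(j)},F)$, which must fail for $j$ large because ${\rm diam}(Q^{(j)})\to\infty$ while ${\rm dist}(Q^{(j)},F)\le d(x)$; let $Q_k$ be the last ancestor for which both inequalities hold. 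This is exactly the argument in Chapter~6 of \cite{St}, and I would simply reproduce it with the constants $c_1=1$, $c_2=4$ tracked explicitly, then note countability and local finiteness to get the stated enumeration.
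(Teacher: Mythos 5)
Your construction is correct: it is the standard dyadic maximal-cube (Whitney) argument from Chapter~6 of Stein, which is precisely the proof the paper invokes by citation rather than reproducing. Apart from the minor constant adjustment in the choice of the generation $k$ (e.g.\ requiring $2\,{\rm diam}(Q)\le d(x)<4\,{\rm diam}(Q)$), which you already flag, and noting that (ii) is meant as disjointness of interiors, your argument matches the paper's intended proof.
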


\medskip

Note that if  $\Omega$ is an open set with  $\Omega=\bigcup\limits_{k=1}^{\infty}Q_k$
  a Whitney decomposition, then  for every $\varepsilon:\ 0<\varepsilon<1/4,$ there exists $N\in \mathbb{N}$
such that no point in $\Omega$ belongs to more than $N$ of the cubes $Q_{k}^{\ast}$, where
$Q_{k}^{\ast}=(1+\varepsilon)Q_k.$

 \bigskip

\begin{proof}[Proof of (a) of Theorem~\ref{th4.1}] 
Since $g^{\ast}_{\mu',\Psi}(f)\leq \gL(f)$ whenever $\mu'\geq \mu$, it is enough
to prove ($a$) of Theorem~\ref{th4.1}  for $3<\mu<4.$ Since $\gL$ is subadditive, we may assume that $f\geq0$ 
in the proof (if not we only need to consider the positive part and the negative part of $f$).  

For $\lambda>0,$ we set $\Omega=\{x\in\RN:\ Mf(x)>\lambda\}$. By \cite{FS} it follows that

\begin{eqnarray}\label{e4.2}
\int_{\Omega}wdx\leq {C\over \lambda}\int_{\RN}|f|Mwdx.
\end{eqnarray}

\noindent Let $\Omega=\cup Q_j$ be a Whitney decomposition, and define

\begin{eqnarray*}
h(x)&=&\left\{
         \begin{array}{ll}
           f(x), &  x\notin \Omega \\  [12pt]
           {1\over |Q_j|}\int_{Q_j}f(x)dx, & x\in Q_j
         \end{array}
       \right.\\[12pt]
b_j(x)&=&\left\{
         \begin{array}{ll}
           f(x)-{1\over |Q_j|}\int_{Q_j}f(x)dx, &  x\in Q_j \\  [12pt]
           0, & x\notin Q_j.
         \end{array}
       \right.
\end{eqnarray*}

\noindent Then $f=h+\sum_jb_j$, and we set $b=\sum_jb_j.$ As in \cite{St}, we have
$|h|\leq C\lambda$ a.e. By (\ref{e4.2}), it suffices  to show

\begin{eqnarray}\label{e4.3}
w\{x\notin\Omega:\ \gL(f)(x)>\lambda\}\leq {C\over \lambda}\int_{\RN}|f|Mwdx.
\end{eqnarray}

\noindent By Chebychev's inequality and Theorem~\ref{th3.5},

\begin{eqnarray*}
w\{x\notin\Omega:\ \gL(h)(x)>\lambda\}&\leq& {1\over \lambda^2}
\int_{\RN}\gL(h)^2(w\chi_{\RN\setminus\Omega})dx\\
&\leq&  {C\over \lambda^2}
\int_{\RN}|h|^2 M(w\chi_{\RN\setminus\Omega})dx\\
&\leq&{C\over \lambda }
\int_{\RN}|h|  M(w\chi_{\RN\setminus\Omega})dx
\end{eqnarray*}

\noindent since $|h|\leq C\lambda$ a.e. By definition of $h$, the last expression is at most

\begin{eqnarray}\label{e4.4}
{C\over \lambda }
\int_{\RN}|f|  Mwdx +\sum_j{C\over \lambda }
\int_{Q_j}\Big({1\over |Q_j|}\int_{Q_j}|f(z)|dz\Big)  M(w\chi_{\RN\setminus\Omega})(x)dx.
\end{eqnarray}

\noindent From the property (iii) of Lemma \ref{le4.2}, we know that for $x,z\in Q_j$ there is a constant
$C$ depending only on $n$ so that $M(w\chi_{\RN\setminus\Omega})(x)\leq CM(w\chi_{\RN\setminus\Omega})(z)$.
Thus (\ref{e4.4}) is less than

$${C\over \lambda }
\int_{\RN}|f|  Mwdx +\sum_j{C\over \lambda }
\int_{Q_j}\Big({1\over |Q_j|}\int_{Q_j}|f(z)|Mw(z)dz\Big) dx\leq {C\over \lambda }
\int_{\RN}|f|  Mwdx.$$

\noindent This gives

$$w\{x\notin\Omega:\ \gL(h)(x)>\lambda\}\leq{C\over \lambda }
\int_{\RN}|f|  Mwdx.$$

\noindent Therefore, estimate (\ref{e4.3}) will follow if we  show that

\begin{eqnarray}\label{eb}
w\{x\notin\Omega:\ \gL(b)(x)>\lambda\}\leq{C\over \lambda }
\int_{\RN}|f|  Mwdx.
\end{eqnarray}

\noindent  To prove (\ref{eb}),  we follow an idea of \cite{DM} to decompose  $b=\sum_jb_j=\sum_j\Phi_j(\sqrt{L})b_j + \sum_j\big(1-\Phi_j(\sqrt{L})\big)b_j,$ 
where $\Phi_j(\sqrt{L})=\Phi\Big({\ell(Q_j)\over32}\sqrt{L}\Big),$ $\Phi$ is the function as in Lemma~\ref{le2.2}
and $\ell(Q_j)$ is the side length of the cube $Q_j.$ See also \cite{CD}.
So, it reduces  to
show that

\begin{eqnarray}\label{ei1}
 w\{x\notin\Omega:\ \gL\Big(\sum_j\Phi_j(\sqrt{L})b_j\Big)(x)>\lambda\}\leq{C\over \lambda }
\int_{\RN}|f|  Mwdx
\end{eqnarray}

 \noindent
 and

\begin{eqnarray}\label{ei2}
 w\{x\notin\Omega:\ \gL\Big(\sum_j\big(1-\Phi_j(\sqrt{L})\big)b_j\Big)(x)>\lambda\}\leq{C\over \lambda }
\int_{\RN}|f|  Mwdx.
\end{eqnarray}

\noindent     By Chebychev's inequality   and Theorem \ref{th3.5} again, we have

\begin{eqnarray*}
{\rm LHS\ \ of \ \ (\ref{ei1})}\ &\leq&{C\over \lambda^2}\int_{\RN}\Big|\gL\Big(\sum_j\Phi_j(\sqrt{L})b_j\Big)\Big|^2
(w\chi_{\RN\setminus\Omega})dx\\
&\leq&{C\over \lambda^2}\int_{\RN}\Big| \sum_j\Phi_j(\sqrt{L})b_j \Big|^2
M(w\chi_{\RN\setminus\Omega})dx.
\end{eqnarray*}

\noindent Note that $\Phi_j(\sqrt{L})=\Phi\Big({\ell(Q_j)\over32}\sqrt{L}\Big),$ it follows from Lemma~\ref{le2.1}
that ${\rm supp}\  \Phi_j(\sqrt{L})b_j\subset {{17 }Q_j/16} $ and $\big|K_{\Phi_j(\sqrt{L})}(x,y)\big|\leq C/\ell(Q_j)$.
Hence, the above inequality is at most

\begin{eqnarray*} 
 {C\over \lambda^2}\sum_j \int_{\RN}\Big| \Phi_j(\sqrt{L})b_j \Big|^2
M(w\chi_{\RN\setminus\Omega})dx.
\end{eqnarray*}

\noindent
This, together with Lemma~\ref{le2.1} and the definition of $b$, 
yields

\begin{eqnarray*} 
{\rm LHS\ \ of \ \ (\ref{ei1})}\ 
&\leq& {C\over \lambda^2}\sum_j\int_{{17 }Q_j/16}\Big({\ell(Q_j)^{-n}}\int_{Q_j} |b(y)|dy \Big)^2
M(w\chi_{\RN\setminus\Omega})(x)dx\\
&\leq& {C\over \lambda^2}\sum_j\int_{{17 }Q_j/16}\Big( {1\over|Q_j|}\int_{Q_j}|f(y)|  dy \Big)^2
M(w\chi_{\RN\setminus\Omega})(x)dx\\
&\leq& {C\over \lambda }\sum_j\int_{{17 }Q_j/16}\Big( {1\over|Q_j|}\int_{ Q_j}|f(y)|  dy \Big)
M(w\chi_{\RN\setminus\Omega})(x)dx\\
&\leq& {C\over \lambda }\sum_j{1\over|Q_j|}\int_{{17 }Q_j/16} \int_{ Q_j}|f(y)|M(w\chi_{\RN\setminus\Omega})(y)  dy
dx\\
&\leq&{C\over \lambda }\int_{\RN}|f |Mwdy.
\end{eqnarray*}

\noindent This proves the desired estimate  (\ref{ei1}). 

Next we turn to  estimate (\ref{ei2}). It suffices to show that 

$$\sum_j\int_{\RN\setminus\Omega}\gL\Big( \big(1-\Phi_j(\sqrt{L})\big)b_j\Big)wdx
\leq C\int_{\RN}|f|Mwdx.$$

\noindent
Further, the above inequality reduces to prove the following result:

\begin{eqnarray}\label{ei22}
\int_{\RN\setminus\Omega}\gL\Big( \big(1-\Phi_j(\sqrt{L})\big)b_j\Big)wdx
\leq C\int_{Q_j}|f|Mwdx.
\end{eqnarray}

\noindent Let $x_j$ denote the center of $Q_j$. Let us   estimate
$\Psi(t\sqrt{L})\big(1-\Phi_j(\sqrt{L})\big)b_j(y)=:\Psi_{jt}(\sqrt{L})b_j(y)$ by considering 
 two cases: $t\leq \ell(Q_j)/4$ and $t>\ell(Q_j)/4$.

 \medskip
 
\noindent 
{\it Case 1. $t\leq \ell(Q_j)/4$}. \ In this case, we use Lemma \ref{le2.1} to obtain

\begin{eqnarray*}
 \big|\Psi_{jt}(\sqrt{L})b_j(y)\big|&\leq& |\Psi(t\sqrt{L})b_j(y)|+\Big|\Psi(t\sqrt{L})\Phi_j(\sqrt{L})b_j(y)\Big|\\
&\leq& \Big|\int_{Q_j}K_{\Psi(t\sqrt{L})}(y,z)b(z)dz\Big|\\
&&+\Big|\int_{{17\over16}Q_j}
K_{\Psi(t\sqrt{L})}(y,z)\Big(\int_{Q_j}K_{\Phi_j(\sqrt{L})}(z,x)b(x)dx\Big)dz\Big|\\
&\leq& C\|b_j\|_{ 1}t^{-n}.
\end{eqnarray*}

 \medskip
 
\noindent 
{\it Case 2. $t> \ell(Q_j)/4$}. Using  Lemma \ref{le2.2}, we have

\begin{eqnarray*}
\big|\Psi_{jt}(\sqrt{L})b_j(y)\big|
& \leq&\int_{\RN}\Big|K_{\Psi(t\sqrt{L})\big(1-\Phi_j(\sqrt{L})\big)}(y,z)\Big||b_j(z)|dz\\
& \leq& C \|b_j\|_{ 1}\ell(Q_j)t^{-n-1}.
\end{eqnarray*}

From the property (iii) of Lemma \ref{le4.2}, we know that if $x\notin\Omega,$
then $|x-x_j|> (\sqrt{n}+1/2)\ell(Q_j)$. By Lemma \ref{le2.1}, we have
$\Psi(t\sqrt{L})\big(1-\Phi_j(\sqrt{L})\big)b_j(y)=0$ unless $|y-x_j|\leq t+(1/32+\sqrt{n}/2)\ell(Q_j).$
Note that for $x\notin\Omega$, $0<t\leq \ell(Q_j)/4$ and $|y-x_j|\leq t+(1/32+\sqrt{n}/2)\ell(Q_j)$,
$|x-y|\geq|x-x_j|-|y-x_j|>{\sqrt{n}/2+7/32\over \sqrt{n}+1/2}|x-x_j|$. 
Denote $F_j=:\{y:\ |y-x_j|<(9/32+\sqrt{n}/2)\ell(Q_j)\}$. 
Then  for $x\notin\Omega$ and $\mu>3$, we have

\begin{eqnarray*}
&&\hspace{-1cm}\bigg(\int^{\ell(Q_j)/4}_{0}\int_{F_j}
\big|\Psi_{jt}(\sqrt{L})b_j(y)\big|^2
\Big({t\over t+|x-y|}\Big)^{n\mu}{dydt\over t^{n+1}}\bigg)^{1/2}\\
&&\leq C {\|b_j\|_1\ell(Q_j)^{n/2}\over |x-x_j|^{n\mu/2}}
\bigg(\int^{\ell(Q_j)/4}_{0}
t^{n\mu-2n-n-1}dt\bigg)^{1/2}\\
&&\leq C {\|b_j\|_1\ell(Q_j)^{n/2}\over |x-x_j|^{n\mu/2}}\ell(Q_j)^{(n\mu-3n)/2}\\
&&\leq C {\|b_j\|_1\ell(Q_j)^{-n}\over (1+|x-x_j|/\ell(Q_j))^{n\mu/2}} \\
&&\leq C |f\chi_{Q_j}|\ast \tau_{\ell(Q_j)}(x),
\end{eqnarray*}

\noindent where $\tau_{\ell(Q_j)}(x)=1/(1+|x|)^{n\mu/2}\in L^1(\RN)$.

For the next part of the integral we consider two cases: $n=1$ and $n>1$.
Note that for $x\notin\Omega$, $\ell(Q_j)/4<t\leq |x-x_j|/4$ and $y\in E_{jt}=:\{y:\ |y-x_j|\leq t+(1/32+\sqrt{n}/2)\ell(Q_j)\}$,
$|x-y|\geq|x-x_j|-|y-x_j|>{\sqrt{n}/4+11/32\over \sqrt{n}+1/2}|x-x_j|$. Thus for $3<\mu<4$, if $n=1$,

\begin{eqnarray*}
&&\hspace{-1cm}\bigg(\int_{\ell(Q_j)/4}^{|x-x_j|/4}\int_{E_{jt}}
\big|\Psi_{jt}(\sqrt{L})b_j(y)\big|^2
\Big({t\over t+|x-y|}\Big)^{n\mu}{dydt\over t^{n+1}}\bigg)^{1/2}\\[3pt]
&&\leq C {\|b_j\|_1\ell(Q_j)\over |x-x_j|^{ \mu/2}}
\bigg(\int_{\ell(Q_j)/4}^{|x-x_j|/4}
t^{-4+1+\mu-2}dt\bigg)^{1/2}\\[3pt]
&&\leq C {\|b_j\|_1\ell(Q_j) \over |x-x_j|^{ \mu/2}}\ell(Q_j)^{( \mu-4)/2}\\[3pt]
&&\leq C {\|b_j\|_1\ell(Q_j)^{-1}\over (1+|x-x_j|/\ell(Q_j))^{ \mu/2}} \\[3pt]
&&\leq C |f\chi_{Q_j}|\ast \sigma_{\ell(Q_j)}(x),
\end{eqnarray*}

\noindent where $\sigma_{\ell(Q_j)}(x)=1/(1+|x|)^{ \mu/2} $. On the other hand, for
$3<\mu<4$, if $n>1$,

\begin{eqnarray*}
&&\hspace{-1cm} \bigg(\int_{\ell(Q_j)/4}^{|x-x_j|/4}\int_{E_{jt}}
\big|\Psi_{jt}(\sqrt{L})b_j(y)\big|^2
\Big({t\over t+|x-y|}\Big)^{n\mu}{dydt\over t^{n+1}}\bigg)^{1/2}\\
&&\leq C {\|b_j\|_1\ell(Q_j)\over |x-x_j|^{ n\mu/2}}
\bigg(\int_{\ell(Q_j)/4}^{|x-x_j|/4}
t^{-2n-2+n\mu+n-n-1}dt\bigg)^{1/2}\\
&&\leq C {\|b_j\|_1\ell(Q_j) \over |x-x_j|^{ n\mu/2}}\ell(Q_j)^{( n\mu-2n-2)/2}\\
&&\leq C {\|b_j\|_1\ell(Q_j)^{-n}\over (1+|x-x_j|/\ell(Q_j))^{ n+1}} \\
&&\leq C |f\chi_{Q_j}|\ast P_{\ell(Q_j)}(x),
\end{eqnarray*}

\noindent where $P_{\ell(Q_j)}(x)=1/(1+|x|)^{ n+1} $. 

Finally, since $t/(t+|x-y|)\leq 1$, so

\begin{eqnarray*}
&&\hspace{-1cm}\bigg(\int^{\infty}_{|x-x_j|/4}\int_{E_{jt}}
\big|\Psi_{jt}(\sqrt{L})b_j(y)\big|^2
\Big({t\over t+|x-y|}\Big)^{n\mu}{dydt\over t^{n+1}}\bigg)^{1/2}\\
&&\leq C {\|b_j\|_1\ell(Q_j) }
\bigg(\int^{\infty}_{|x-x_j|/4}
t^{-2n-3}dt\bigg)^{1/2}\\
&&\leq C |f\chi_{Q_j}|\ast P_{\ell(Q_j)}(x).
\end{eqnarray*}

\noindent Therefore, if $x\notin\Omega$, and $n>1$, then
$\gL\Big(\big(1-\Phi_j(\sqrt{L})\big)b_j\Big)(x)\leq C |f\chi_{Q_j}|\ast P_{\ell(Q_j)}(x).$
And

\begin{eqnarray*}
\int_{\RN\setminus\Omega}\gL\Big( \big(1-\Phi_j(\sqrt{L})\big)b_j\Big)wdx
&\leq& C \int_{\RN\setminus\Omega}|f\chi_{Q_j}|\ast P_{\ell(Q_j)}wdx\\
&\leq& C \int_{Q_j}|f|(  P_{\ell(Q_j)}\ast w)dx\\
&\leq& C \int_{Q_j}|f|Mwdx.
\end{eqnarray*}

\noindent If $n=1$ we get the same thing, but with $P$ replaced by $\sigma.$ This concludes the proof of
(\ref{e4.3}). And the proof of this theorem is complete.
\end{proof}

\subsection{Estimate for $2<p<\infty$} 
We proceed by duality. If $h(x)\geq0$ and $h\in L^{(p/2)'}(wdx)$, then

\begin{eqnarray*}
\int_{\RN}\gL(f)^2hwdx=\int_{\RR^{n+1}_{+}}|\Psi(t\sqrt{L})f(y)|^2{1\over t}
\bigg({1\over t^n}\int_{\RN}h(x)w(x)\Big({t\over t+|x-y|}\Big)^{n\mu}dx\bigg)dydt.
\end{eqnarray*}

\noindent Set

$$E_k=\Big\{(y,t):\  {1\over t^n}\int_{\RN}h(x)w(x)\Big({t\over t+|x-y|}\Big)^{n\mu}dx\sim2^k\Big\}.$$

\noindent Note that if $|y-z|<t$, then $t+|x-y|\sim t+|x-z|$. Thus, if $(y,t)\in E_k$ and
$|y-z|<t$, then

\begin{eqnarray*}
2^k<{1\over t^n}\int_{\RN}h(x)w(x)\Big({t\over t+|x-y|}\Big)^{n\mu}dx\sim
{1\over t^n}\int_{\RN}h(x)w(x)\Big({t\over t+|x-z|}\Big)^{n\mu}dx.
\end{eqnarray*}

\noindent The last expression is at most

\begin{eqnarray*}
&&\hspace{-1.2cm}C\sum_{j=0}^{\infty}{1\over2^{jn\mu}}{1\over t^n}\int_{B(z,2^jt)}hwdx\\
&=&C\sum_{j=0}^{\infty}{1\over2^{jn(\mu-1)}}
{w(B(z,2^jt))\over (2^jt)^n}{1\over w(B(z,2^jt))}\int_{B(z,2^jt)}hwdx\\
&\leq&C\sum_{j=0}^{\infty}{1\over2^{jn(\mu-1)}}Mw(z)M_w(h)(z)\\
&\leq&C Mw(z)M_w(h)(z),
\end{eqnarray*}

\noindent where

$$M_w(h)(z)=\sup_{t>0}\Big({1\over w(B(z,t))}\int_{B(z,t)}hwdx\Big).$$
Recall that ${\rm supp}\ K_{\Psi(t\sqrt{L})}(y,\cdot)\subset B(y,t)$. Since for
$(y,t)\in E_k$ and $|y-z|<t$ we have $z\in A_k=\{z:\  Mw(z)M_w(h)(z)\geq C_{n,\mu}2^k\},$ it
follows that for $(y,t)\in E_k$,
 $\Psi(t\sqrt{L})f(y)=\Psi(t\sqrt{L})(f\chi_{A_k})(y).$ 
Thus,

\begin{eqnarray*}
&&\hspace{-1.2cm}\int_{\RR^{n+1}_{+}}|\Psi(t\sqrt{L})f(y)|^2{1\over t}
\bigg({1\over t^n}\int_{\RN}h(x)w(x)\Big({t\over t+|x-y|}\Big)^{n\mu}\bigg)dydt\\
&&\leq \sum_k 2^{k+1}\int_{E_k}|\Psi(t\sqrt{L})f(y)|^2{dydt\over t}\\
&&= \sum_k 2^{k+1}\int_{E_k}|\Psi(t\sqrt{L})(f\chi_{A_k})(y)|^2{dydt\over t}\\
&&\leq C  \sum_k 2^{k+1}\int_{\RN}| f|^2\chi_{A_k}{dy }\\
&&\leq C   \int_{\RN}| f|^2 {MwM_w(h) }{dy }.
\end{eqnarray*}

\noindent Applying the H\"older  inequality with exponents $p/2$ and $(p/2)'$, we obtain the bound

$$ C   \bigg(\int_{\RN}| f|^p (Mw)^{p/2}w^{-(p/2-1)}{dy }\Big)^{2/p}
\Big(\int_{\RN}M_w(h)^{(p/2)'}w{dy }\bigg)^{(p-2)/p}.$$
However, since $M_w$ is the centered maximal function, we have

$$\int_{\RN}M_w(h)^{(p/2)'}wdx\leq C_{n,p}\int_{\RN}h^{(p/2)'}wdx,$$
by a standard argument based on the Besicovitch covering lemma. Since
$h$ is arbitrary, we obtain   our result.

\bigskip

\section{Proof of Theorem~\ref{th1.4} }
\setcounter{equation}{0}

\medskip

In order to prove Theorem \ref{th1.4}, we first prove  the following result.

\begin{lemma}\label{le5.1}
Let $T$ be     of the area functions $s_h$, $s_p$, $S_{P}$, $S_{H}$ and  $\gL$ with $\mu>3$.   
Under assumptions of  Theorems~\ref{th1.1}, ~\ref{th1.2} and ~\ref{th4.1}, for $w\in A_p,\
1<p<\infty$, we have

\begin{eqnarray}
\label{e5.1}\|Tf\|_{L^p_w(\RN)}\leq C \|f\|_{L^p_w(\RN)}
\end{eqnarray}

\noindent
 where
  constant $C$ depends only on
$p$, $n$ and $w$.
\end{lemma}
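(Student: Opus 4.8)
The plan is to reduce (\ref{e5.1}) to the auxiliary function $\gL$, to harvest the \emph{unweighted} $L^{q}$ and weak-$(1,1)$ bounds for $\gL$ from Theorem~\ref{th4.1} (take $w\equiv 1$, so $Mw\equiv 1$), and then to upgrade these to the $A_{p}$ estimate by a Calder\'on--Zygmund-type argument in which the missing pointwise smoothness of the kernel of $\gL$ is replaced by the off-diagonal bounds of Lemmas~\ref{le2.1} and~\ref{le2.2}. First, by Propositions~\ref{prop3.3} and~\ref{prop3.4}, $Tf(x)\le C\gL(f)(x)$ for $T\in\{s_h,s_p,S_P,S_H\}$ with $\mu>3$, so since $\gL$ itself is on the list it suffices to prove (\ref{e5.1}) for $T=\gL$. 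From Theorem~\ref{th4.1} with the trivial weight, $\gL$ is bounded on $L^{q}(\RN)$ for every $q\in(1,\infty)$ and is of weak type $(1,1)$.

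The core step is a Fefferman--Stein sharp-function estimate: for some fixed $\delta\in(0,1)$,
\[
M^{\#}\big((\gL f)^{\delta}\big)(x)\le C\big(Mf(x)\big)^{\delta},\qquad x\in\RN,
\]
for all $f\in\mathcal S(\RN)$. To prove this, fix a cube $Q\ni x$ and split $f=f\chi_{3Q}+f\chi_{(3Q)^{c}}=:f_{1}+f_{2}$. For the local piece, Kolmogorov's inequality together with the weak-$(1,1)$ bound (here $\delta<1$ enters) gives $\frac{1}{|Q|}\int_{Q}(\gL f_{1})^{\delta}\le C\big(\frac{1}{|Q|}\int_{3Q}|f|\big)^{\delta}\le C(Mf(x))^{\delta}$. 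For the far piece one shows $\gL f_{2}(y)\le C\,Mf(x)$ for every $y\in Q$: since $\mathrm{supp}\,K_{\Psi(t\sqrt L)}\subseteq\{|u-v|\le t\}$ by Lemma~\ref{le2.1}, only scales $t\gtrsim\ell(Q)$ contribute to $\gL f_{2}(y)$, and then the size estimates of Lemmas~\ref{le2.1}--\ref{le2.2}, in particular the decisive factor $r/t$ in (\ref{e2.8}), produce the dyadic tail $\sum_{k\ge 1}2^{-k}\frac{1}{|2^{k}Q|}\int_{2^{k}Q}|f|\le CMf(x)$ --- which is exactly the computation carried out for the term $(1-\Phi_{j}(\sqrt L))b_{j}$ in the proof of part (a) of Theorem~\ref{th4.1}. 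Hence the oscillation of $(\gL f)^{\delta}$ over $Q$ is at most $C(Mf(x))^{\delta}$, and the displayed bound follows.

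To conclude, since $\gL f\in L^{q}(\RN)$ and $w\in A_{p}\subset A_{\infty}$, the Fefferman--Stein inequality $\|g\|_{L^{r}_{w}}\le C\|M^{\#}g\|_{L^{r}_{w}}$ applied with $g=(\gL f)^{\delta}$ and $r=p/\delta$ yields $\|\gL f\|_{L^{p}_{w}(\RN)}\le C\|Mf\|_{L^{p}_{w}(\RN)}$; the boundedness of $M$ on $L^{p}_{w}(\RN)$ for $w\in A_{p}$ (Muckenhoupt) then gives $\|\gL f\|_{L^{p}_{w}}\le C\|f\|_{L^{p}_{w}}$, hence (\ref{e5.1}) for all the operators in question by the first step, with $C$ depending on $p$, $n$ and the $A_{\infty}$ constant of $w$. (One could instead reduce to a single exponent, say $p=2$, by Rubio de Francia's extrapolation theorem, but the base case still needs the same sharp-function reasoning, so nothing is gained.) The main obstacle is precisely the far-part estimate $\gL f_{2}\le CMf$ on $Q$: because $\gL$ has no smooth kernel this cannot be read off from a Lipschitz or H\"older bound and must be extracted from the quantitative $r/t$-decay of $K_{\Psi(t\sqrt L)(1-\Phi(r\sqrt L))}$ in Lemma~\ref{le2.2}, exactly the device that powers the weak-$(1,1)$ bound in Section~4.
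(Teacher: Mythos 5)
Your reduction to $\gL$ is fine, but the route you then take diverges from the paper and has a real gap; moreover you have missed the paper's much shorter argument.

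The paper's proof is a two-liner: part (b) (resp.\ (c)) of Theorem~\ref{th4.1}, specialized to $w\in A_1$, already gives $\|\gL f\|_{L^p_w}\le C\|f\|_{L^p_w}$ for all $1<p<\infty$ because $Mw\le \|w\|_{A_1}w$ a.e.; Rubio de Francia's extrapolation theorem (extrapolating \emph{from the class $A_1$}, not from a single exponent) then yields $L^p_w$-boundedness for every $w\in A_p$. You dismiss extrapolation on the grounds that ``the base case still needs the same sharp-function reasoning,'' but that is not so: the base case is already handed to you by Theorem~\ref{th4.1}, precisely because its right-hand side involves $Mw$. Nothing beyond $Mw\lesssim w$ for $w\in A_1$ is required.

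The sharp-function route you pursue instead has a genuine gap in the far-part estimate. You claim that for $z\in Q$ and $f_2=f\chi_{(3Q)^c}$ one has the \emph{pointwise} bound $\gL f_2(z)\le C\,Mf(x)$, arguing (i) that only scales $t\gtrsim\ell(Q)$ contribute, and (ii) that the $r/t$-decay in Lemma~\ref{le2.2} finishes the job. Both steps are incorrect. For (i): the $\gL$ function carries the cone \emph{weight} $\bigl(t/(t+|z-y|)\bigr)^{n\mu}$ rather than a sharp cone cutoff, so small scales $t$ with $y$ near $\partial(3Q)$ do contribute — they are merely damped, not excluded. For (ii): estimate~(\ref{e2.8}) concerns the compound operator $\Psi(t\sqrt L)\bigl(1-\Phi(r\sqrt L)\bigr)$; in your computation there is no auxiliary cutoff $\Phi(r\sqrt L)$ and no parameter $r$, so the factor $r/t$ simply is not available. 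With only the size bound $|K_{\Psi(t\sqrt L)}|\le Ct^{-n}$ from Lemma~\ref{le2.1}, the large-$t$ contribution gives $\int_{\ell(Q)}^\infty\!\int_{|z-y|\lesssim t}Mf(x)^2\,dy\,dt/t^{n+1}=Mf(x)^2\int_{\ell(Q)}^\infty dt/t$, which diverges; the naive estimate does not close, and I do not see how to justify the asserted pointwise bound. This is exactly why Proposition~\ref{pro5.5} in the paper does \emph{not} attempt such a bound: it decomposes the \emph{integration region} into $T(2Q)$ and its complement (not the function into $f_1+f_2$), and for the far region it bounds the \emph{oscillation} $|I_2(z)-I_2(z_0)|$ via the mean value theorem applied to the weight $(t+|z-y|)^{-n\mu}$. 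That is where the crucial extra factor $\ell(Q)/(t+|z-y|)$ — which plays the role of your phantom $r/t$ — comes from, and it is what makes the dyadic sum $\sum_k 2^{-k}(\cdot)$ converge. If you want to run a sharp-function argument, you must replace your far-part claim with this oscillation estimate (or an equivalent dyadic tent decomposition); as written the step would fail.

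Finally, a cosmetic point: the paper's Proposition~\ref{pro5.5} works with $q=2$ (i.e.\ $M^\sharp_\lambda\bigl(\gL(f)^2\bigr)\le C\,Mf^2$) and pairs it with Lemma~\ref{le5.4}, rather than with $\delta<1$ and Kolmogorov. Both variants would give Lemma~\ref{le5.1}, but the $q=2$ version is what later yields the sharp exponent $\beta_p=\max\{1/2,1/(p-1)\}$ in Theorem~\ref{th5.4}; a choice $\delta<1$ would degrade that exponent. For the present lemma, which does not track the $A_p$-constant, this is immaterial — but it is another reason the extrapolation proof is the efficient one here, with the sharp-function machinery reserved for Theorem~\ref{th1.4}.
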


\begin{proof}  Let   $T$ be   
  of the area functions $s_h$, $s_p$, $S_{P}$, $S_{H}$ and  $\gL$ with $\mu>3$.
 Note that if $w\in A_1$, then $Mw\leq Cw$ a.e.  By
Theorems~\ref{th1.1}, ~\ref{th1.2} and ~\ref{th4.1},   $T$ is bounded on
 $L^p_w(\RN), 1<p<\infty,$ for any $w\in A_1,$ i.e.,

$$\|Tf\|_{L^p_w(\RN)}\leq C \|f\|_{L^p_w(\RN)}.
$$

By extrapolation theorem, these operators are all  bounded on
 $L^p_w(\RN), 1<p<\infty,$ for any $w\in A_p,$ 
  and estimate (\ref{e5.1}) holds. For the detail, we refer the reader
to pages 141-142, Theorem 7.8, \cite{D}.
\end{proof}

\medskip

Going further, we introduce some definitions. Given a weight $w$, set $w(E)=\int_E w(x)dx$. The non-increasing
rearrangement of a measurable function $f$ with respect to a weight
$w$ is defined by (cf. \cite{CR})

\begin{eqnarray*}
f^\ast_w(t)=\sup_{w(E)=t} \inf_{x\in E}|f(x)|\ \ \  \ (0<t<w(\RN)).
\end{eqnarray*}

\noindent If $w\equiv1$, we use the notation $f^\ast(t)$.

Given a measurable function $f$, the local sharp maximal function $M^\sharp_{\lambda}f$ is defined by

\begin{eqnarray*}
M^\sharp_{\lambda}f(x)=\sup_{Q\ni x}\inf_{c}\big((f-c)\chi_Q\big)^\ast(\lambda|Q|)\ \ \ (0<\lambda<1).
\end{eqnarray*}

\noindent This function was introduced by Str\"omberg \cite{S}, and 
motivated by an alternate characterization of the space $BMO$ given
by John \cite{J}.

\begin{lemma}\label{le5.4}
For any $w\in A_p$ and for any locally integrable function $f$ with
$f^\ast_w(+\infty)=0$ we have

\begin{eqnarray}\label{e5.2}
\|Mf\|_{L^p_w(\RN)}\leq C\|w\|_{A_p}^{\gamma_{p,q}}\cdot\|M^\sharp_{\lambda_n}
(|f|^q)\|^{1/q}_{L^{p/q}_w(\RN )}\ \ \ (1<p<\infty,1\leq q<\infty),
\end{eqnarray}

\noindent where $\gamma_{p,q}=\max\{1/q,1/(p-1)\}$, $C$ depends only on $p,q$ and on the
underlying dimension $n$, and $\lambda_n$ depends only on $n$.
\end{lemma}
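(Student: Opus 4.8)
The plan is to reduce the weighted $L^p$ bound for $Mf$ to a two-step chain: first compare $Mf$ with the dyadic maximal function and then with the local sharp maximal function via a good-$\lambda$ type argument, tracking the $A_p$ constant quantitatively at each stage. Concretely, I would fix $w\in A_p$ and start from the pointwise domination $Mf(x)\le C_n M^d f(x)$ by a finite collection of (shifted) dyadic maximal operators $M^d$, so it suffices to bound $M^d$. For the dyadic operator one has the classical sparse/stopping-time decomposition: there is a sparse family $\mathcal{S}$ of dyadic cubes with $M^d f(x)\approx \sum_{Q\in\mathcal{S}} \langle |f|\rangle_Q \chi_Q(x)$, and the weighted norm of such a sparse operator is controlled by $\|w\|_{A_p}^{\max\{1,1/(p-1)\}}$ — this is the place where the exponent $\gamma_{p,q}$ (with the $1/(p-1)$ branch) enters. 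The factor $1/q$ branch will come from the second step.

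Next I would bring in the local sharp maximal function. The key local estimate is the Fujii–Str\"omberg inequality: for a suitable dimensional constant $\lambda_n<1$, one has the pointwise bound $|f(x)|\le c_n\, M^{\sharp}_{\lambda_n}f(x)$ for a.e.\ $x$ when $f^\ast_w(+\infty)=0$, and more usefully a localized version comparing averages $\langle |f|\rangle_Q$ (or $\langle |f|^q\rangle_Q^{1/q}$) over a cube $Q$ with $\inf_Q M^{\sharp}_{\lambda_n}(|f|^q)^{1/q}$ plus an error controlled by values on $2Q$. Iterating this down a stopping-time tree — exactly the scheme in Lerner's work on local sharp maximal functions — converts the sparse bound for $M^d(|f|)$ into a sparse bound for the operator built from $M^{\sharp}_{\lambda_n}(|f|^q)^{1/q}$, and applying H\"older's inequality with exponent $p/q$ inside each average produces the power $1/q$ in the exponent. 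Collecting the two contributions gives the product $\|w\|_{A_p}^{\max\{1/q,1/(p-1)\}}=\|w\|_{A_p}^{\gamma_{p,q}}$ in front of $\|M^{\sharp}_{\lambda_n}(|f|^q)\|_{L^{p/q}_w}^{1/q}$.

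The main obstacle will be the \emph{sharp} tracking of the $A_p$ constant through the sparse/stopping-time machinery: one must verify that the stopping-time construction controlling $M^{\sharp}_{\lambda_n}$ can be run with the \emph{same} sparse family (or one with comparable Carleson constant) used for $M^d$, so that the two exponents combine additively inside a single $\max$ rather than multiplying. This requires the Besicovitch/Calder\'on–Zygmund stopping cubes to be chosen relative to $w$-measure in the $1<p<2$ regime (to get the $1/(p-1)$ power, using the dual weight $\sigma=w^{1-p'}$ and $\|\sigma\|_{A_{p'}}=\|w\|_{A_p}^{1/(p-1)}$) and relative to Lebesgue measure for $p\ge 2$, and then checking that the resulting constant is the advertised one in both ranges. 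The condition $f^\ast_w(+\infty)=0$ is used only to guarantee that the "$c$" in the definition of $M^{\sharp}_{\lambda}$ can be taken to tend to $0$ along the stopping tree, i.e.\ that the telescoping of median-type constants actually converges; I would invoke it precisely at that point. Everything else — the pointwise domination of $M$ by shifted dyadic maximal operators, the sparse bound for dyadic operators, and the Besicovitch covering estimate for $M_w$ — is standard and can be cited (e.g.\ \cite{CR}, \cite{S}, \cite{J} and the references therein).
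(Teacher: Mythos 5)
Note first that the paper does not actually prove this lemma; it simply cites Theorem~3.1 of \cite{L1}, so the only thing to compare your sketch against is Lerner's original argument, which proceeds by rearrangement/good-$\lambda$ techniques and predates the sparse-domination machinery you invoke.

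Your sketch has a genuine gap precisely in the tracking of the $A_p$ exponent, which is the entire content of the lemma. You pass from $M^d f$ to a sparse sum $\sum_{Q\in\mathcal{S}}\langle|f|\rangle_Q\chi_Q$ and then invoke the sparse-operator ($A_2$-theorem type) bound, giving exponent $\max\{1,1/(p-1)\}$. That step is already lossy: Buckley's sharp bound for the maximal operator itself is $\|w\|_{A_p}^{1/(p-1)}$, and the sparse sum dominating $M^d$ loses a factor whenever $p>2$. Much worse, the lemma's exponent $\gamma_{p,q}=\max\{1/q,1/(p-1)\}$ is strictly smaller than $\max\{1,1/(p-1)\}$ as soon as $q>1$ and $p\ge 1+q$; for instance $p=3$, $q=2$ gives $\gamma_{3,2}=1/2$, while your route produces exponent $1$. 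The place where the $1/q$ branch is supposed to emerge (``applying H\"older's inequality with exponent $p/q$ inside each average produces the power $1/q$ in the exponent'') is never actually justified, and the claim that the two contributions ``combine additively inside a single $\max$ rather than multiplying'' is internally inconsistent: adding exponents is multiplying powers of $\|w\|_{A_p}$, not taking a maximum. What is missing is a mechanism, such as the rearrangement estimate of Lerner relating $f^\ast_w$ to $(M^\sharp_{\lambda_n}f)^\ast_w$ (where $f^\ast_w(+\infty)=0$ is used to make the resulting telescoping sum converge), together with Buckley's theorem, applied in a way that makes the two sources of $A_p$ dependence act on disjoint regimes of $(p,q)$ so that a genuine maximum results. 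Without that, your outline does not prove the stated inequality.
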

For the proof of this lemma, see Theorem 3.1 in \cite{L1}.

\begin{prop}\label{pro5.5} Let $\gL$ be a function with  $\mu>3$ in (\ref{e3.1}). 
  Then for any $f\in C^\infty_0(\RN)$  and for all
$x\in \RN,$

$$M^\sharp_{\lambda}\big(\gL(f)^2\big)(x)\leq CMf(x)^2,$$
where $C$ depends on $\lambda,\mu,\Psi$ and $n$.
\end{prop}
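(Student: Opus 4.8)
The plan is to estimate the oscillation of $\gL(f)^2$ over a cube $Q$ by splitting $f$ into its part near $Q$ and its part away from $Q$, following the now-standard good-$\lambda$/Fefferman--Stein type splitting that one uses for square functions. Fix $x\in\RN$ and a cube $Q\ni x$ with center $x_Q$ and side length $\ell(Q)$. Write $f=f_1+f_2$ where $f_1=f\chi_{2\sqrt{n}Q}$ and $f_2=f-f_1$. Since $\gL(f)^2\leq 2\gL(f_1)^2+2\gL(f_2)^2$ and the local sharp maximal function is subadditive in a suitable sense, it suffices to bound two contributions: a \emph{local} term coming from $f_1$, and an \emph{oscillation} term coming from $f_2$, where for the latter I will choose the constant $c$ in the definition of $M^\sharp_\lambda$ to be (a value close to) $\gL(f_2)(x_Q)^2$, or rather a pointwise-almost-constant surrogate of it on $Q$.

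For the local term, I would use the $L^1$-to-weak-$L^1$ behaviour that is implicit in Theorem~\ref{th3.5}: since $\int_{\RN}\gL(g)^2\,dx\leq C\int_{\RN}|g|^2 M(1)\,dx$ is not quite what is needed, I instead use the unweighted $L^1$ bound for $\gL(\cdot)$ as an operator from $L^2$, i.e. $\|\gL(g)\|_{L^1}^{1/2}$-type control, together with Kolmogorov's inequality: $\big((\gL(f_1)^2)\chi_Q\big)^\ast(\lambda|Q|)\leq \frac{C}{\lambda|Q|}\int_Q \gL(f_1)^2$ is too weak, so the right move is to use that $\gL$ is bounded from $L^1$ to weak $L^1$ (which follows from part (a) of Theorem~\ref{th4.1} with $w\equiv 1$, giving $\gL:L^1\to L^{1,\infty}$), and hence by Kolmogorov the average $\frac{1}{|Q|}\int_Q \gL(f_1)\,dx\leq C\frac{1}{|Q|}\int_{2\sqrt n Q}|f|\,dx\leq C\,Mf(x)$, so that the decreasing rearrangement of $(\gL(f_1))\chi_Q$ at level $\lambda|Q|$ is $\lesssim Mf(x)$; squaring gives the $Mf(x)^2$ bound for the $f_1$-piece.

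For the oscillation term the key point is that $\gL(f_2)(y)^2$ is essentially constant on $Q$ up to an error controlled by $Mf(x)^2$. Here I would use the same tail estimates that appear in the proof of part (a) of Theorem~\ref{th4.1}: for $y,y'\in Q$ and a dyadic decomposition of the region $\{|z-x_Q|\geq 2\sqrt n\,\ell(Q)\}$ into annuli $2^jQ\setminus 2^{j-1}Q$, the kernel bounds from Lemmas~\ref{le2.1} and \ref{le2.2} (encoded through the factor $(t/(t+|x-y|))^{n\mu}$ with $\mu>3$) give that $|\gL(f_2)(y)-\gL(f_2)(y')|\leq C\sum_{j\geq 1}2^{-j\varepsilon}\,\frac{1}{|2^jQ|}\int_{2^jQ}|f|\,dx\leq C\,Mf(x)$ for some $\varepsilon>0$; then $|\gL(f_2)(y)^2-\gL(f_2)(y')^2|\leq |\gL(f_2)(y)-\gL(f_2)(y')|\cdot(\gL(f_2)(y)+\gL(f_2)(y'))$, and one more application of the tail bound (or Cauchy--Schwarz together with the fact that $\gL(f_2)(y)\lesssim Mf(x)$ for $y\in Q$, which again follows from the annular estimate) converts this into $\lesssim Mf(x)^2$. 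Taking $c$ to be $\gL(f_2)(x_Q)^2$ (or its average over $Q$) then makes $\big((\gL(f_2)^2-c)\chi_Q\big)^\ast(\lambda|Q|)\lesssim Mf(x)^2$.

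Combining the two pieces and taking the supremum over $Q\ni x$ yields $M^\sharp_\lambda(\gL(f)^2)(x)\leq C\,Mf(x)^2$, which is the claim. The main obstacle I anticipate is the \emph{oscillation estimate for the far part}: one must extract genuine smoothness of $y\mapsto \gL(f_2)(y)$ even though the underlying operator $\Psi(t\sqrt L)$ has no pointwise regularity in the space variable. The resolution is exactly the device used throughout the paper — the extra decay $(t/(t+|x-y|))^{n\mu}$ with $\mu>3$ in the definition of $\gL$ plays the role of the missing Calderón--Zygmund regularity, letting one compare $\gL(f_2)(y)$ and $\gL(f_2)(y')$ for $y,y'\in Q$ by estimating the difference of the weights $(t/(t+|x-y|))^{n\mu}$ and integrating against the $L^2$-normalized quantity $|\Psi(t\sqrt L)f(z)|^2\,dz\,dt/t^{n+1}$, with the finite-propagation-speed support property of $K_{\Psi(t\sqrt L)}$ (Lemma~\ref{le2.1}) localizing the $z$-integration. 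A secondary technical point is the bookkeeping needed so that the constant $c$ subtracted inside $M^\sharp_\lambda$ can be taken independent of $y$; choosing $c=\gL(f_2)(x_Q)^2$ and controlling everything by $Mf(x)^2$ handles this cleanly.
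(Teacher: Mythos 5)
The paper's proof splits the \emph{domain} of the integral defining $\gL(f)^2$, writing $\gL(f)(z)^2 = I_1(z)+I_2(z)$ with $I_1$ the integral over the tent $T(2Q)$ and $I_2$ the integral over its complement; $I_1$ is handled by finite propagation speed plus the weak-$(1,1)$ bound, and only the \emph{oscillation} $|I_2(z)-I_2(z_0)|$ is ever estimated (via the Mean Value Theorem applied to the weight $(t+|z-y|)^{-n\mu}$, combined with a dyadic tent decomposition and the pointwise bound $|\Psi(t\sqrt L)g(y)|\le Ct^{-n}\|g\|_{L^1}$). Your proposal instead splits the \emph{function}, $f=f_1+f_2$. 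For the $f_1$-piece the two approaches are essentially the same (weak-$(1,1)$ plus rearrangement over $Q$). But for the far piece your approach has a genuine gap, and it is not a minor bookkeeping issue.

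The problem is the cross term created by squaring. Since $\gL(\cdot)^2$ is quadratic, one has $\gL(f)(y)^2=\gL(f_1)(y)^2+\gL(f_2)(y)^2+2\,\mathrm{Re}\,\langle\Psi(t\sqrt L)f_1,\Psi(t\sqrt L)f_2\rangle_{\gL,y}$, and the last term is controlled only through $2\,\gL(f_1)(y)\,\gL(f_2)(y)$. To reduce this to $Mf(x)^2$, or to run your proposed factoring $|\gL(f_2)(y)^2-\gL(f_2)(y')^2|\le|\gL(f_2)(y)-\gL(f_2)(y')|\,(\gL(f_2)(y)+\gL(f_2)(y'))$, you need the pointwise bound $\gL(f_2)(y)\lesssim Mf(x)$ for $y\in Q$, which you assert but do not prove. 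I do not believe it is available: the only pointwise kernel estimate in hand is $|\Psi(t\sqrt L)g(z)|\le Ct^{-n}\|g\chi_{B(z,t)}\|_{L^1}$, and feeding this into $\gL(f_2)(y)^2$ and decomposing by the size of $t+|y-z|\sim 2^k\ell(Q)$ gives a contribution of order $Mf(x)^2$ \emph{for every} $k$, so the sum over $k$ (equivalently the $\int dt/t$) diverges. The extra decay $2^{-k}$ that rescues the estimate comes precisely from the Mean Value Theorem factor $\ell(Q)/(t+|z-y|)$, which is present only when one differences in $z$; it is not available for $\gL(f_2)$ itself. The paper's $I_1+I_2$ decomposition is exact (no cross term) and sidesteps the issue entirely, since $I_2$ is never bounded on its own, only its oscillation over $Q$. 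To repair your argument, apply the Mean Value Theorem directly to the squared quantity $\gL(f_2)(y)^2-\gL(f_2)(y')^2$ (as the paper does for $I_2$) rather than factoring into a difference times a sum, and replace the $f=f_1+f_2$ splitting with the domain splitting $I_1+I_2$ so that no cross term arises.
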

\begin{proof}
Given a cube $Q$, let $T(Q)=\{(y,t):\ y\in Q,0<t<l(Q)\}$, where $\ell(Q)$ denotes the
side length of $Q$. For $(y,t)\in T(Q)$, using (\ref{e2.5}) of Lemma \ref{le2.1} we have

\begin{eqnarray}\label{e5.3}
\Psi(t\sqrt{L})f(y)=\Psi(t\sqrt{L})(f\chi_{3Q})(y).
\end{eqnarray}

Now, fix a cube $Q$ containing $x$. For any $z\in Q$ we decompose
$\gL(f)^2$ into the sum of

$$I_1(z)=\iint_{T(2Q)}|\Psi(t\sqrt{L})f(y)|^2\Big({t\over t+|z-y|}\Big)^{n\mu}{dydt\over t^{n+1}}$$
and

$$I_2(z)=\iint_{\RR^{n+1}_{+} \setminus T(2Q)}|\Psi(t\sqrt{L})f(y)|^2\Big({t\over t+|z-y|}\Big)^{n\mu}{dydt\over t^{n+1}}.$$

\noindent
From Theorem \ref{th4.1}, we know that for $\mu>3$,   $\gL(f)$ is of weak type $(1, 1)$. Then using
(\ref{e5.3}), we have

\begin{eqnarray}\label{e5.4}
(I_1)^\ast(\lambda|Q|)&\leq& \Big(\gL(f\chi_{6Q})\Big)^{\ast}(\lambda|Q|)^2\\
&\leq&\Big({C\over \lambda|Q|}\int_{6Q}|f|\Big)^2\leq CMf(x)^2.\nonumber
\end{eqnarray}

\noindent
Further, for any $z_0 \in Q$ and $(y, t)\notin T (2Q)$, by the Mean
Value Theorem,

$$(t+|z-y|)^{-n\mu}-(t+|z_0-y|)^{-n\mu}\leq C\ell(Q)(t+|z-y|)^{-n\mu-1}.$$
From this and (\ref{e5.3}), using  Lemma \ref{le2.1} again and
$\mu>3$, we have

\begin{eqnarray*}
&&\hspace{-1.2cm}|I_2(z)-I_2(z_0)|\\
&&\leq C\ell(Q)\iint_{\RR^{n+1}_{+}\setminus
T(2Q)}t^{n\mu}|\Psi(t\sqrt{L})f(y)|^2\Big({1\over
t+|z-y|}\Big)^{n\mu+1}{dydt\over t^{n+1}}\\
&&\leq C\sum^\infty_{k=1}{1\over 2^k}{1\over (2^k\ell(Q))^{n\mu}}
\iint_{T(2^{k+1}Q)\setminus
T(2^{k}Q)}t^{n\mu}|\Psi(t\sqrt{L})f(y)|^2{dydt\over t^{n+1}}\\
&&\leq C\sum^\infty_{k=1}{1\over 2^k}{|2^{k+1}Q|\over
(2^k\ell(Q))^{n\mu}}\Big(\int^{2^{k+1}\ell(Q)}_0t^{n\mu-3n-1}dt\Big)
\Big(\int_{6\cdot2^kQ}|f|\Big)^2\\
&&\leq C\sum^\infty_{k=1}{1\over 2^k}\Big({1\over
|2^{k+1}Q|}\int_{6\cdot2^kQ}|f|\Big)^2\leq CMf(x)^2.
\end{eqnarray*}
 Combining this estimate with (\ref{e5.4}) yields

 \begin{eqnarray*}
\inf_{c}\Big((\gL(f)^2-c)\chi_Q\Big)^\ast(\lambda|Q|)&\leq&
\big((I_1+I_2-I_2(z_0))\chi_Q\big)^\ast(\lambda|Q|)\\
&\leq&(I_1)^\ast(\lambda|Q|)+CMf(x)^2\\
&\leq& CMf(x)^2,
 \end{eqnarray*}
which proves the desired result.
\end{proof}

\medskip
Then we have  the following result. 

\begin{theorem}\label{th5.4}
Let $T$ be    of the area functions $s_h$, $s_p$, $S_{P}$,$S_{H}$ and $\gL$ with  $ \mu>3.$
Under assumptions of  Theorems~\ref{th1.1}, ~\ref{th1.2} and ~\ref{th4.1}, for $w\in A_p,\
1<p<\infty$, if $\|f\|_{L^p_w(\RN )}<\infty$, then

\begin{eqnarray}
\label{e5.5}
\Big(\int_{\RN}\big(M(Tf)\big)^pwdx\Big)^{1/p}\leq C\|w\|_{A_p}^{\beta_p}\Big(\int_{\RN}
\big(M( f)\big)^pwdx\Big)^{1/p},
\end{eqnarray}

\noindent
 where
$\beta_p=\max\{1/2,1/(p-1)\}$, and a constant $C$ depends only on
$p$  and $n$. 
\end{theorem}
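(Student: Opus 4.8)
The plan is to combine Proposition~\ref{pro5.5}, which controls the local sharp maximal function of $(Tf)^2$ (for $T=\gL$, and hence by the pointwise domination in Propositions~\ref{prop3.3} and~\ref{prop3.4} for $T=s_h, s_p, S_P, S_H$ as well) by $(Mf)^2$, with Lemma~\ref{le5.4}, which converts control of $M^\sharp_{\lambda}$ into an $L^p_w$ bound with an explicit power of $\|w\|_{A_p}$. First I would fix $w\in A_p$ and $f$ with $\|f\|_{L^p_w}<\infty$, and reduce to $T=\gL$ with $\mu>3$ since the other four area functions are pointwise dominated by a constant multiple of $\gL(f)$; thus it suffices to prove \eqref{e5.5} with $Tf$ replaced by $\gL(f)$. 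Next I would verify the hypothesis $\big(\gL(f)^2\big)^\ast_w(+\infty)=0$ needed to apply Lemma~\ref{le5.4}: this follows because $\gL$ is bounded on $L^p_w(\RN)$ for $w\in A_p$ by Lemma~\ref{le5.1}, so $\gL(f)^2\in L^{p/2}_w(\RN)$ and its $w$-rearrangement vanishes at infinity.

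Then I would apply Lemma~\ref{le5.4} with the function $\gL(f)^2$ in place of $f$ and with exponent $q=2$ (so that $\gamma_{p,2}=\max\{1/2,1/(p-1)\}=\beta_p$ and the target exponent $p/q=p/2$ plays the role of ``$p$'' there, i.e. I would run the lemma at exponent $p$ with $q=2$, which requires $p>2$; for $1<p\le 2$ I would instead run it at an exponent larger than $q$ after noting $\beta_p=1/(p-1)$ there). This gives
\begin{eqnarray*}
\big\|M\big(\gL(f)^2\big)\big\|_{L^{p/2}_w(\RN)}\leq C\|w\|_{A_p}^{\beta_p}\,
\big\|M^\sharp_{\lambda_n}\big(\gL(f)^2\big)\big\|_{L^{p/2}_w(\RN)}.
\end{eqnarray*}
Inserting the pointwise bound $M^\sharp_{\lambda_n}\big(\gL(f)^2\big)(x)\leq C\,Mf(x)^2$ from Proposition~\ref{pro5.5} on the right, and using $\gL(f)^2(x)\leq M\big(\gL(f)^2\big)(x)$ a.e.\ together with the trivial $M\big((Mf)^2\big)\geq (Mf)^2$ on the left, and finally taking square roots, yields
\begin{eqnarray*}
\Big(\int_{\RN}\big(M(\gL f)\big)^p w\,dx\Big)^{1/p}
\leq C\|w\|_{A_p}^{\beta_p}\Big(\int_{\RN}\big(M(f)\big)^p w\,dx\Big)^{1/p},
\end{eqnarray*}
which is \eqref{e5.5}; the passage from $\gL$ to $s_h,s_p,S_P,S_H$ is immediate by the pointwise domination, since $M(Tf)\leq C\,M(\gL f)$.

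The main obstacle I expect is bookkeeping the exponents so that the power of $\|w\|_{A_p}$ comes out exactly as $\beta_p=\max\{1/2,1/(p-1)\}$ in all ranges of $p$: Lemma~\ref{le5.4} produces $\|w\|_{A_p}^{\gamma_{p,q}}$ with $\gamma_{p,q}=\max\{1/q,1/(p-1)\}$, and one must choose $q$ (here $q=2$ when $p>2$, and a suitable $q$ slightly bigger than one, or a direct argument, when $1<p\le 2$) so that $\gamma$ collapses to $\beta_p$; one must also be slightly careful that Lemma~\ref{le5.4} is being applied at the correct Lebesgue exponent, namely to $\gL(f)^2\in L^{p/2}_w$, with $p/2$ playing the role of the lemma's ``$p$'' when $q=2$. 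A secondary point to check is that $f\in C_0^\infty(\RN)$ is needed for Proposition~\ref{pro5.5}, so one first proves \eqref{e5.5} for such $f$ and then extends to general $f\in L^p_w(\RN)$ by density, using the $L^p_w$-boundedness of both $M$ and $\gL$ from Lemma~\ref{le5.1} together with Fatou's lemma.
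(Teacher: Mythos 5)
Your overall plan (combine Lemma~\ref{le5.4} with Proposition~\ref{pro5.5}, then transfer to $s_h,s_p,S_P,S_H$ by the pointwise domination $Tf\leq C\gL(f)$) is precisely the paper's, but the way you invoke Lemma~\ref{le5.4} is misstated and your intermediate display does not follow from it. The correct substitution is $\gL(f)$ in place of $f$ (\emph{not} $\gL(f)^2$), with $q=2$ and the lemma's exponent equal to $p$ itself. This yields
\begin{eqnarray*}
\|M(\gL(f))\|_{L^p_w(\RN)}\leq C\|w\|_{A_p}^{\gamma_{p,2}}\,
\big\|M^\sharp_{\lambda_n}\big(\gL(f)^2\big)\big\|^{1/2}_{L^{p/2}_w(\RN)},
\end{eqnarray*}
note the $1/q=1/2$ power on the right, which you dropped. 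Feeding in $M^\sharp_{\lambda_n}(\gL(f)^2)\leq C(Mf)^2$ from Proposition~\ref{pro5.5} gives $\big\|M^\sharp_{\lambda_n}(\gL(f)^2)\big\|^{1/2}_{L^{p/2}_w}\leq C\|Mf\|_{L^p_w}$, and (\ref{e5.5}) follows in one line, with no squaring and re--square--rooting needed. By contrast, your displayed inequality, with $M(\gL(f)^2)$ on the left in $L^{p/2}_w$ and no $1/2$ power on the right, corresponds to running Lemma~\ref{le5.4} at exponent $p/2$ with $q=1$ on the function $\gL(f)^2$; that would produce the constant $\|w\|_{A_{p/2}}^{\max\{1,\,2/(p-2)\}}$ and require $w\in A_{p/2}$, neither of which matches, and even if your display were granted, the square-rooting step you describe would land you at $\|w\|_{A_p}^{\beta_p/2}$ rather than $\|w\|_{A_p}^{\beta_p}$. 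The case split $p>2$ versus $1<p\leq 2$ is also unnecessary: Lemma~\ref{le5.4} is stated for all $1<p<\infty$ and $1\leq q<\infty$, so $q=2$ applies uniformly. Your observations that $(\gL(f))^\ast_w(+\infty)=0$ follows from Lemma~\ref{le5.1}, and that one should first take $f\in C_0^\infty(\RN)$ for Proposition~\ref{pro5.5} and then pass to general $f$ by density, are correct and fill in details the paper's terse proof leaves implicit.
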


\begin{proof}
Suppose $T=\gL.$  From Lemma~\ref{le5.1}, we know that
$\gL$ is bounded on $L^p_w(\RN ) $  when $w\in A_p$. Therefore, assuming that
$\|f\|_{L^p_w(\RN)}$ is finite, we clearly obtain that
$(\gL)^\ast_w(+\infty)=0 $.   Letting $\gL(f)$ instead of $f$ in
(\ref{e5.2}) with $q=2$ and applying Proposition \ref{pro5.5}, we get

$$\Big(\int_{\RN}\big(M(\gL(f))\big)^pwdx\Big)^{1/p}\leq C\|w\|_{A_p}^{\beta_p}\Big(\int_{\RN}
\big(M( f)\big)^pwdx\Big)^{1/p}.$$

Under assumptions of  Theorems~\ref{th1.1} and ~\ref{th1.2}, it follows that  the area functions
$s_h$, $s_p$, $S_{P}$ and $S_{H}$ are all controlled by $\gL$ pointwise.
So we have the estimate (\ref{e5.5}) for $s_h$, $s_p$, $S_{P}$ and $S_{H}$.    Then the
proof of this theorem is complete.
\end{proof}

\medskip
\begin{proof}[Proof of Theorem \ref{th1.4}] 

In \cite{B}, Buckley proved that for the Hardy-Littlewood maximal
operator,

\begin{eqnarray}\label{m}
\|M\|_{L^p_w(\RN)}\leq C\|w\|_{A_p}^{1/(p-1)}\ \ (1<p<\infty),
\end{eqnarray}

\noindent and this result is sharp.

From (\ref{m}) and Theorem~\ref{th5.4},    there exists a constant $C=C(T, n, p)$ such that
 for all $w\in A_p$,

\begin{eqnarray}\label{5.7}
\|T\|_{L^p_w(\RN)}\leq C\|w\|_{A_p}^{ {1\over p-1}+ \, \max\big\{{1\over 2},\, {1\over p-1}\big\}}  \ \ \ \ \ \  (1<p<\infty),
\end{eqnarray}

\noindent where   $T$ is of the area functions $s_h$, $s_p$, $S_{P}$ and $S_{H}.$  
 This proves Theorem \ref{th1.4}.
\end{proof}

\bigskip

\noindent
{\bf Remarks.}\ 

\smallskip

 (i) \, Note 
that  when $L=-\Delta$ is the Laplacian on $\RN$, it is well known 
that the exponents $\beta_p$ of (\ref{e5.5}) in Theorem~\ref{th5.4} is best possible,
 in general (see, e.g., Theorem 1.5, \cite{L1}). 
 
  \smallskip
 
(ii)\, For the classical
 area function $S_{\varphi}$ in (\ref{e1.1}), the result of Theorem~\ref{th1.4} was recently  improved by  A. Lerner in \cite{L2}, i.e.,
 there exists a constant $C=C(S_{\varphi}, n, p)$ such that
 for all $w\in A_p, 1<p<\infty$, 
 
 \begin{eqnarray}\label{e5.8}
\|S_{\varphi}\|_{L^p_w(\RN)}\leq C\|w\|_{A_p}^{  \, \max\big\{{1\over 2},\, {1\over p-1}\big\}},
\end{eqnarray}

\noindent
and the estimate (\ref{e5.8})
is the best possible for all $1<p<\infty.$  However, we  do not know whether one can deduce the same 
bounds  (\ref{e5.8}) for the $L^p_w$ operator norms of  the area functions $s_h$, $s_p$, $S_{P}$ and $S_{H},$ 
and they are of interest in their own right. 

Note that  sharp weighted optimal bounds for singular integrals  has  been 
studied extensively, see  for examples,    \cite{CMP, HLRSUV, LOP1, LOP2, P}  and the references therein.

 \medskip
 
(iii)\, Finally, for $f\in {\mathcal S}(\RN)$, we define the (so called vertical) Littlewood-Paley-Stein  
functions  ${\mathcal G}_P $ and $  {\mathcal G}_H$
 by

 \begin{eqnarray*} 
{\mathcal G}_P(f)(x)&=&\bigg(\int_0^{\infty}
|t\nabla_x e^{-t\sqrt{L}} f(x)|^2{  dt\over t }\bigg)^{1/2},\\
{\mathcal G}_H(f)(x)&=&\bigg(\int_0^{\infty}
|t\nabla_x e^{-t^2L} f(x)|^2 {  dt\over t }\bigg)^{1/2},  
\end{eqnarray*}

\noindent
as well as the (so-called horizontal) Littlewood-Paley-Stein  functions $  g_p$ and $g_h$
 by

 \begin{eqnarray*} 
g_p(f)(x)&=&\bigg(\int_0^{\infty}
|t\sqrt{L} e^{-t\sqrt{L}} f(x)|^2 {  dt\over t }\bigg)^{1/2},\\
g_h(f)(x)&=&\bigg(\int_0^{\infty}
|t^2L e^{-t^2L} f(x)|^2 {  dt\over t }\bigg)^{1/2}. 
\end{eqnarray*}

\medskip

One then has the analogous statement as in Theorems 1.1, 1.2, 1.3 and 1.4  replacing 
$s_p, s_h, S_P, S_H $   by $g_p, g_h, {\mathcal G}_P,
  {\mathcal G}_H$, respectively.

\vskip 1cm

%\bigskip

\noindent

{\bf Acknowledgment}:\   The research of Lixin Yan  is  supported by      NNSF of China (Grant No.  10771221)
and   National
Science Foundation for Distinguished
Young Scholars of China (Grant No.  10925106).

\bigskip

\end{document}